\newtheorem{thm}{Theorem}[section]
\newtheorem{lem}[thm]{Lemma}
\newtheorem{prop}[thm]{Proposition}
\newtheorem{cor}[thm]{Corollary}
\theoremstyle{definition}
\newtheorem{de}[thm]{Definition}
\theoremstyle{remark}
\newtheorem{rem}[thm]{Remark}
\numberwithin{equation}{section}
\def \T {\mathbb{T}}
\def \N {\mathbb{N}}
\def \C {\mathbb{C}}
\def \Z {\mathbb{Z}}
\def \R {\mathbb{R}}
\def \B {\mathcal{B}}
\def \F {\mathcal{F}}
\def \H {\mathcal{H}}
\def \a {\alpha }
\def \b {\beta}
\def \ep {\epsilon}
\def \d {\delta}
\def \D {\Delta}
\def\w {\omega}
\def \exp {\text{exp}}
\begin{document}

\title{Sequences from zero entropy noncommutative toral automorphisms and Sarnak Conjecture}

\author{Wen Huang}
\author{Zhengxing Lian}
\author{Song Shao}
\author{Xiangdong Ye}

\address{Wu Wen-Tsun Key Laboratory of Mathematics, USTC, Chinese Academy of Sciences and
Department of Mathematics, University of Science and Technology of China,
Hefei, Anhui, 230026, P.R. China.}
\email{wenh@mail.ustc.edu.cn} \email{songshao@ustc.edu.cn}
\email{yexd@ustc.edu.cn}

\address{Wu Wen-Tsun Key Laboratory of Mathematics, USTC, Chinese Academy of Sciences and
Department of Mathematics, University of Science and Technology of China,
Hefei, Anhui, 230026, P.R. China, and Department of Mathematics, SUNY at Buffalo, Buffalo, NY 14260-2900, U.S.A.}
\email{lianzx@mail.ustc.edu.cn, zhengxin@buffalo.edu}


\date{May 07, 2015}
\date{May 21, 2015}
\date{Sept. 4, 2015}
\date{Sep. 8, 2015}
\date{Oct. 07, 2015}

\subjclass[2010]{Primary: 46L87, 11K31 } \keywords{noncommutative tori; M\"{o}bius function; entropy}

\thanks{Huang is partially supported by NNSF for Distinguished Young Schooler (11225105), and all
authors are supported by NNSF of China (11171320, 11371339, 11431012, 11571335).}

\begin{abstract}
In this paper we study $C^*$-algebra version of Sarnak Conjecture for noncommutative toral
automorphisms. Let $A_\Theta$ be a noncommutative torus and $\a_\Theta$ be the noncommutative
toral automorphism arising from a matrix $S\in GL(d,\Z)$. We show that if the Voiculescu-Brown
entropy of $\a_{\Theta}$ is zero, then the sequence $\{\rho(\a_{\Theta}^nu)\}_{n\in \Z}$ is a
sum of a nilsequence and a zero-density-sequence, where $u\in A_\Theta$ and $\rho$ is any state
on $A_\Theta$. Then by a result of Green and Tao \cite{GT2012-2}, this sequence is linearly disjoint from the M\"{o}bius function.
\end{abstract}

\maketitle

\markboth{Noncommutative tori and Sarnak Conjecture}{W. Huang, Z. Lian, S. Shao and X.D. Ye}




\section{Introduction}

\subsection{Sarnak Conjecture}\

\medskip

The M\"{o}bius function $\mu: \N\rightarrow \{-1,0,1\}$ is defined by
$\mu(1)=1$ and
\begin{equation}\label{M-function}
  \mu(n)=\left\{
           \begin{array}{ll}
             (-1)^k & \hbox{if $n$ is a product of $k$ distinct primes;} \\
             0 & \hbox{otherwise.}
           \end{array}
         \right.
\end{equation}

Let $(X,T)$ be a (topological) dynamical system, namely $X$ is a compact metric space and $T : X \rightarrow X$
a homeomorphism. We say a sequence $\xi$ is {\em realized} in $(X,T)$ if there is an $f\in C(X)$ and an $x\in X$
such that $\xi(n) = f(T^nx)$ for any $n\in\N$. A sequence $\xi$ is called {\em deterministic} if it is realized in a
system with zero topological entropy. Here is the conjecture by Sarnak \cite{Sar}:

\medskip

\noindent {\bf Sarnak Conjecture:}\ {\em
The M\"{o}bius function $\mu$ is linearly disjoint from any deterministic sequence $\xi$. That is,
\begin{equation}\label{Sarnak}
  \lim_{N\rightarrow \infty}\frac{1}{N}\sum_{n=1}^N\mu(n)\xi(n)=0.
\end{equation}
}

Please refer to \cite{Sar, Sar1, SU, LS} for more details and progress on this conjecture.

\subsection{Sarnak Conjecture from the viewpoint of $C^*$-algebra} \label{Sarnak C-version}\

\medskip

By Gelfand-Naimark theory, there is a direct equivalent form of Sarnak Conjecture in $C^*$-algebras.
Let $(X,T)$ be a  dynamical system. Then $T$ induces an endomorphism $\hat{T}$ of the
commutative $C^*$-algebra $C(X)$ by $\hat{T}(f)=f\circ T$. For each $x\in X$, it gives a pure state
$\rho_x$ on $C(X)$ by $\rho_x(f)=f(x)$. Hence (\ref{Sarnak}) is equivalent to
\begin{equation}\label{Sar1}
   \lim_{N\rightarrow \infty}\frac{1}{N}\sum_{n=1}^N\mu(n)\rho_x(\hat{T}^n(f))=0.
\end{equation}

The Voiculescu-Brown entropy (or contractive approximation entropy) of $\hat{T}$ is zero if and only if
the entropy of $T$ is zero \cite{KL2005}. By Gelfand-Naimark theorem, it is easy to deduce that Sarnak
conjecture is equivalent to the following statement: {\em
Let $\mathfrak{U}$ be a commutative $C^*$-algebra, and let $\a$ be an $^*$-automorphism of $\mathcal{U}$
with zero Voiculescu-Brown entropy (or zero contractive approximation entropy). Let $\rho$ be a state
on $\mathfrak{U}$. Then for each $ u \in \mathfrak{U}$
$$\lim_{N\rightarrow \infty}\frac{1}{N}\sum_{n=1}^N\mu(n)\rho(\a^nu)=0.$$
}

\medskip

Hanfeng Li \cite{Li} points out that in fact Sarnak conjecture is equivalent to the following two forms
which look much stronger. We will outline the main ideas how to prove the equivalences in Section \ref{section-pre}.

\medskip

\noindent {\bf Sarnak Conjecture (Voiculescu-Brown entropy version) \cite{Li}:}\ {\em
Let $\mathfrak{U}$ be an exact $C^*$-algebra, and let $\a$ be a $^*$-automorphism of $\mathcal{U}$ with
zero Voiculescu-Brown entropy. Then for each state $\rho$ on $\mathfrak{U}$ and each $u \in \mathfrak{U}$,
the sequence $\{\rho(\a^n u )\}_{n\in \Z}$ is linearly disjoint from M\"{o}bius function, i.e.
$$\lim_{N\rightarrow \infty}\frac{1}{N}\sum_{n=1}^N\mu(n)\rho(\a^nu)=0.$$
}

\medskip

\noindent {\bf Sarnak Conjecture (Contractive approximation entropy version) \cite{Li}:}\ {\em
Let $\mathfrak{U}$ be a $C^*$-algebra, and let $\a$ be a $^*$-automorphism of $\mathcal{U}$ with zero
contractive approximation entropy. Then for each state $\rho$ on $\mathfrak{U}$ and each $u \in \mathfrak{U}$,
the sequence $\{\rho(\a^n u )\}_{n\in \Z}$ is linearly disjoint from M\"{o}bius function, i.e.
$$\lim_{N\rightarrow \infty}\frac{1}{N}\sum_{n=1}^N\mu(n)\rho(\a^nu)=0.$$
}

Note that Voiculescu-Brown entropy version of Sarnak Conjecture was also studied in \cite{WY}.

\subsection{Main result of the paper}\

\medskip

The main aim of the paper is to study the sequence realized by noncommutative toral automorphisms with
zero Voiculescu-Brown entropy.
Let $\Theta = (\theta_{jk})_{1\le j,k\le d}$ be a real skew-symmetric $d\times d$ matrix. The
noncommutative $d$-torus $A_\Theta$ is defined as the universal $C^*$
-algebra generated by unitaries $u_1,\ldots, u_d$ subject
to the relations
$$ u_ju_k = e^{2\pi i \theta_{jk}}u_k u_j$$
for all $1 \le j, k\le d$. For any matrix $S = (s_{jk})_{1\le j,k\le d}$ in
$GL(d,\Z)$ there is an isomorphism $\a : A_{S'\Theta S}\rightarrow A_{\Theta}$ determined by
$$\a_\Theta(\hat{u}_j)=u_1^{s_{1j}}u_2^{s_{2j}}\ldots u_d^{s_{dj}}$$
for each $j = 1,\ldots ,d$, where $S'$ is the transpose of the matrix $S$, $\hat{u}_1,\ldots,\hat{u}_d$ are the generators of $A_{S'\Theta S}$. Thus when
$S'\Theta S \equiv \Theta$ (mod $M_d (\Z)$) we obtain an automorphism of
$A_\Theta$, which we refer to as a {\em noncommutative toral automorphism}.
Note that noncommutative tori are exact $C^*$-algebras. Their Voiculescu--Brown entropies are studied in \cite{KL2006}.

\medskip

Now assume that the Voiculescu-Brown entropy of $\a_\Theta$ is zero, and let $\rho$ be a state
on $A_\Theta$ and $u\in A_\Theta$. We claim that the sequence $\{\rho(\a_\Theta^n u )\}_{n\in \Z}$
is ``almost'' a nilsequence. Nilsequences are natural generalization of almost periodic sequences,
and we leave the definitions of nilsequences in Section \ref{section-pre}.
To be precise, one of main results is the following.

\medskip

\noindent {\bf Theorem A \label{main-result A}} {\em
Let $A_\Theta$ be a noncommutative torus and $\a_\Theta$ be its automorphism. If the Voiculescu-Brown
entropy of $\a_\Theta$ is zero, then for each state $\rho$ and $u\in A_\Theta$,
$\{\rho(\a_\Theta^n u )\}_{n\in \Z}$ is a sum of a nilsequence and a zero-density-sequence.
}

\medskip

Since the zero-density-sequence is always linear disjoint from M\"{o}bius function (Remark \ref{rem2.10}),
by Theorem A and Green-Tao's result in \cite{GT2012-2}, we have the following corollary:

\begin{cor}
Let $A_\Theta$ be a noncommutative torus and $\a_\Theta$ be its automorphism. If the Voiculescu-Brown
entropy of $\a_\Theta$ is zero, then for each state $\rho$ and $u\in A_\Theta$, $\{\rho(\a_\Theta^n u )\}_{n\in \Z}$
is linearly disjoint form M\"{o}bius function.
\end{cor}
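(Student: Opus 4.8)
The plan is to deduce the corollary directly from Theorem A together with the two disjointness facts already recorded in the paper, so that the proof is essentially a combination step. By Theorem A, the hypothesis that the Voiculescu--Brown entropy of $\a_\Theta$ vanishes lets me write, for the fixed state $\rho$ and fixed $u\in A_\Theta$,
$$\rho(\a_\Theta^n u)=\psi(n)+\eta(n),\qquad n\in\N,$$
where $\psi$ is a nilsequence and $\eta$ is a zero-density-sequence. Since linear disjointness from $\mu$ is measured by the Ces\`aro average $\frac{1}{N}\sum_{n=1}^N\mu(n)\xi(n)$, which is linear in $\xi$, it suffices to show that
$$\frac{1}{N}\sum_{n=1}^N\mu(n)\psi(n)\longrightarrow 0\quad\text{and}\quad \frac{1}{N}\sum_{n=1}^N\mu(n)\eta(n)\longrightarrow 0$$
as $N\to\infty$, and then add the two limits.

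First I would dispose of the nilsequence term: this is exactly the Green--Tao theorem in \cite{GT2012-2}, which asserts that the M\"obius function is orthogonal to every nilsequence, so applying it to $\psi$ gives $\frac{1}{N}\sum_{n=1}^N\mu(n)\psi(n)\to 0$. Next I would handle the zero-density term using Remark \ref{rem2.10}, which states that a zero-density-sequence is always linearly disjoint from $\mu$, giving $\frac{1}{N}\sum_{n=1}^N\mu(n)\eta(n)\to 0$. Summing yields
$$\lim_{N\to\infty}\frac{1}{N}\sum_{n=1}^N\mu(n)\rho(\a_\Theta^n u)=0,$$
which is precisely the asserted disjointness.

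The substantive work is entirely absorbed into Theorem A, so the corollary itself is a bookkeeping argument; I expect the only point needing genuine care, rather than a true obstacle, is that $\{\rho(\a_\Theta^n u)\}$ is complex-valued in general, whereas the Green--Tao orthogonality is frequently quoted for real nilsequences. I would resolve this either by invoking the complex form directly or by splitting $\psi$ into its real and imaginary parts and applying the result componentwise, using that a nilsequence is bounded by definition. I would also verify that the decomposition of Theorem A restricts correctly to the index set $\N$ on which the partial sums are formed; this is immediate, since the restriction of a nilsequence to $\N$ is again a nilsequence and the restriction of a zero-density-sequence again has zero density.
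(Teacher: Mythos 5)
Your proof is correct and follows exactly the paper's own route: the paper derives the corollary in one line by combining Theorem A's decomposition with Green--Tao's orthogonality of $\mu$ to nilsequences and with Remark \ref{rem2.10} for the zero-density part. The extra points you raise (complex-valued sequences, restriction to $\N$) are harmless refinements of the same argument.
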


\medskip

We will deduce the proof of Theorem A to the following theorem, which is also of independent interest.
Recall that an {\em integral polynomial} is a real coefficient polynomial $p(n)$ taking on integer values on the integers.

\medskip

\noindent {\bf Theorem B \label{main-result B}} {\em
Let $\mathcal{H}$ be a Hilbert space. Let $U_1,\ldots,U_d\in \B(\H)$ be unitary operators and
$p_1(n),\ldots,p_d(n)$ be integral polynomials. If the group of unitary operators,
generated by $U_1,\ldots, U_d$, is nilpotent, then for each $u,v\in \H$ the sequence
$$a_n=\langle U_1^{p_1(n)}\ldots U_d^{p_d(n)}u,v\rangle$$ is a sum of a nilsequence and a zero-density-sequence.
}

\medskip

To prove Theorem B we will need some tools developed in \cite{BL2002, L2000}. But for the case
when the group generated by $U_1,\ldots, U_d$ is abelian, we will give a direct proof.

\subsection{Organization of the paper}\

\medskip

The paper is organized as follows:
In Section \ref{section-pre} and Section \ref{section-nilsequence}, we introduce some basic conceptions and results needed in this paper.
To explain the basic ideas we study toral automorphisms in Section \ref{section-tori}. Then we study noncommutative
toral automorphisms with zero Voiculescu-Brown entropy in Section \ref{section-noncummutative tori}, and show how
we deduce Theorem A to Theorem B. For independent interest, we give a direct proof for Theorem B when the
group generated by $U_1,\ldots, U_d$ is abelian. In section \ref{section-nilcase}, we prove Theorem B.

\bigskip

\noindent {\bf Acknowledgments.}
We thank Hanfeng Li for pointing out the $C^*$-algebra versions of Sarnak
Conjecture, and helpful discussions about entropies in $C^*$-algebras. 
The second author also thanks the Department of Mathematics, SUNY at Buffalo for the hospitality when staying as a visiting student.

\section{Preliminaries}\label{section-pre}

In this section, we introduce some basic conceptions and results of $C^*$-algebra needed in this paper. And we outline the proof of the equivalence of original Sarnak conjecture
and contractive approximation entropy version \cite{Li}.

\subsection{Voiculescu-Brown entropy and Contractive approximation entropy}\

\medskip

The content of this subsection is from \cite{Brown, KL2005, KL2006}, please refer to them for more details.

\subsubsection{Voiculescu-Brown entropy}

\begin{de}\cite{Brown}
Let $A$ be a $C^*$-algebra and $\pi:A\rightarrow B(\H)$ be a faithful (possibly degenerate) $^*$-representation of $A$.
\begin{enumerate}
  \item $Pf(A)=\{\w{{:}}\, \w\subset A$ is  a  finite set $\}$.
  \item $CPA(\pi,A)=\{(\phi,\psi,B):$ where $\phi:A\rightarrow B,\psi:B\rightarrow B(\H)$ are contractive completely positive maps and $ {\rm dim}(B)<\infty\}$.
  \item $rcp(\pi,\w,\delta)=\inf\{{\rm rank}(B):(\phi,\psi,B)\in CPA(\pi,A)$ and $\|\psi\circ\phi(x)-\pi(x)\|<\delta$ for all $x\in \w\}$, where ${\rm rank}(B)$ is the dimension of a maximal abelian subalgebra of $B$.
\end{enumerate}

The notations above stand for the ``finite parts'' of $A$, the completely
positive approximations of $(\pi;A)$ and the completely positive {\it $\delta$-rank} of
$\omega$ with respect to $\pi$, respectively. The {\it $\delta$-rank} of $\omega$ is defined to be $\infty$ if
no such approximation exists.

\medskip

Now assume that we have an automorphism $\a\in {\rm Aut}(A)$, and set
\begin{equation*}
\begin{split}
ht(\pi,\a,\w,\delta)= & \lim\sup\frac{1}{n}\log rcp(\pi,\w\cup\a\w\cup\ldots\cup\a^{n-1}\w,\delta),\\
ht(\pi,\a,\w)= & \sup_{\delta>0} hc(\a,\w,\delta),\\
ht(\pi,\a)= & \sup_{{\omega}\in Pf(A)} hc(\a,\w)=\text{Voiculescu-Brown entropy of }\ \a
\end{split}
\end{equation*}
\end{de}
Then $ht(\pi,\a)$ is called the {\em Voiculescu-Brown entropy} of $\a$ ({\em VB entropy, for short}) of $\a$.

\subsubsection{Contractive approximation entropy}

Let $X$ and $Y$ be Banach spaces and $\gamma:X\rightarrow Y$ a bounded linear map. Denote by
$\mathcal{P}_f(X)$ the collection of finite subsets of $X$. For each $\Omega\in \mathcal{P}_f(X)$
and $\delta>0$ we denote by $\text{CA}(\gamma,\Omega,\delta)$ the collection of triples $(\phi,\psi,d)$
where $d$ is a positive integer and $\phi:X\rightarrow l^d_{\infty}$ and $\psi:l^d_{\infty}\rightarrow Y$
are contractive linear maps such that
$$\|\psi\circ\phi(x)-\gamma(x)\|<\delta$$
for all $x\in \Omega$. By a CA {\em embedding} of a Banach space $X$ we mean an isometric linear map $\iota$
from $X$ to a Banach space Y such that $\text{CA}(\iota,\Omega,\delta)$ is nonempty for every $\Omega\in\mathcal{P}_f(X)$
and $\delta>0$. Every Banach space admits a CA embedding; for example, the canonical map $X \rightarrow C(B_1(X^*))$ defined
via evaluation is a CA embedding, as a standard partition of unity argument
shows.

\medskip

Let $\iota:X\rightarrow Y$ be a CA embedding. For each $\Omega\in \mathcal{P}_f(X)$ and $\delta>0$ we set
$$rc(\Omega,\delta)=\inf\{d:(\phi,\psi,d)\in \text{CA}(\iota,\Omega,\delta)\}$$
This quantity is independent of the $CA$ embedding.

We denote by $\text{IA}(X)$ the collection of all isometric automorphisms of $X$. For $\a\in \text{IA}(X)$ we set
\begin{equation*}
\begin{split}
hc(\a,\Omega,\delta)= & \lim\sup\frac{1}{n}\log rc(\Omega\cup\a\Omega\cup\ldots\cup\a^{n-1}\Omega,\delta),\\
hc(\a,\Omega)= & \sup_{\delta>0} hc(\a,\Omega,\delta),\\
hc(\a)= & \sup_{\Omega\in \mathcal{P}_f(X)} hc(\a,\Omega)
\end{split}
\end{equation*}
and refer to the last quantity as the {\em contractive approximation entropy} or {\em CA entropy} of $\a$.

\subsubsection{}

Given an isometric automorphism $\a$ of a Banach space $X$, we denote
by $T_\a$ the weak$^*$ homeomorphism of the unit ball $B_1(X^*)$ of the dual of $X$
given by $T_\a(w)=w\circ \a$. The following theorem shows the relation between the $CA$ entropy of a
$C^*$ algebra $\mathfrak{U}$ with its automorphism $\hat{T}$ and the topological entropy $h_{top}(T)$ of its state space $\mathcal{S}$ with the induced map $T$.

\begin{thm}{\cite[Theorem 3.5]{KL2005}}\label{two entropy equivalent}
Let $X$ be a Banach space and $\a\in IA(X)$. Let $Z$ be a closed $T_{\a}$-invariant subset of $B_1(X^*)$
such that the natural linear map $X\rightarrow C(Z)$ is an isomorphism from $X$ to a (closed) linear
subspace of $C(Z)$. Then the following are equivalent:
\begin{enumerate}
  \item $hc(\a)>0$,
  \item $h_{top}(T_\a)>0$,
  \item $h_{top}(T_\a)=\infty$,
  \item $h_{top}(T_\a\mid_Z)>0$.
\end{enumerate}
\end{thm}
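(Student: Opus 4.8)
The plan is to prove the cycle of implications
$$(2)\Rightarrow(1)\Rightarrow(3)\Rightarrow(2)\quad\text{together with}\quad (2)\Leftrightarrow(4),$$
disposing of the easy implications first. Here $(3)\Rightarrow(2)$ is trivial, and $(4)\Rightarrow(2)$ holds because $Z$ is a closed $T_\a$-invariant set, so $(Z,T_\a|_Z)$ is a subsystem of $(B_1(X^*),T_\a)$ and hence $h_{top}(T_\a|_Z)\le h_{top}(T_\a)$. The substance therefore sits in three places: comparing $hc(\a)$ with $h_{top}(T_\a)$, the zero–infinity dichotomy that upgrades positivity to infinity, and the localization of the entropy onto the norming set $Z$.

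First I would set up a dictionary between the contractive approximations defining $hc$ and the dynamical covering/separating numbers defining $h_{top}(T_\a)$. The key observation is that a contractive linear map $\phi:X\to l^d_{\infty}$ is exactly a $d$-tuple $(w_1,\dots,w_d)$ of functionals in $B_1(X^*)$, via $\phi(x)=(w_1(x),\dots,w_d(x))$. For a finite set $\Omega\in\mathcal P_f(X)$ consider the weak$^*$ pseudometric $d_\Omega(w,w')=\max_{x\in\Omega}|w(x)-w'(x)|$ on $B_1(X^*)$; these generate the weak$^*$ topology, so that $h_{top}(T_\a)=\sup_\Omega h(T_\a,d_\Omega)$. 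Applying the approximation condition $\|\psi\circ\phi(x)-\iota(x)\|<\d$ to the iterated set $\Omega\cup\a\Omega\cup\dots\cup\a^{n-1}\Omega$ translates precisely into the statement that $w_1,\dots,w_d$ are $\d$-spanning for $B_1(X^*)$ in the Bowen pseudometric $d_\Omega^{\,n}=\max_{0\le i<n}d_\Omega\circ(T_\a^i\times T_\a^i)$. This yields two-sided estimates sandwiching $rc(\Omega\cup\dots\cup\a^{n-1}\Omega,\d)$ between the $d_\Omega^{\,n}$-separating and $d_\Omega^{\,n}$-covering numbers of $B_1(X^*)$ at scales comparable to $\d$; taking $\limsup\frac1n\log$ and then suprema over $\Omega$ and $\d$ gives $hc(\a)>0\iff h_{top}(T_\a)>0$.

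Next I would establish the zero–infinity law, which is what forces $(1)\Rightarrow(3)$. Here I would exploit the homogeneity of the linear dynamics: replacing $\Omega$ by $\ll\Omega$ for $\ll>0$ leaves $T_\a$ unchanged but rescales the effective scale, since $d_{\ll\Omega}=\ll\, d_\Omega$, so that separating numbers at scale $\d$ for $\ll\Omega$ equal those at scale $\d/\ll$ for $\Omega$. Because separating numbers are monotone in the scale, positive exponential growth at one scale propagates to all smaller scales, and letting $\ll\to\infty$ drives the exponential rate to infinity. Thus $h_{top}(T_\a)>0$ already forces $h_{top}(T_\a)=\infty$, which combined with the dictionary gives $(1)\Rightarrow(3)$.

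Finally, for $(2)\Rightarrow(4)$ I would use that the hypothesis ``$X\to C(Z)$ is an isomorphism onto a closed subspace'' says exactly that $Z$ is a norming set: there is $c>0$ with $\sup_{z\in Z}|z(x)|\ge c\|x\|$ for all $x\in X$. Consequently the restrictions of the pseudometrics $d_\Omega$ to $Z$ still separate the relevant directions up to the constant $c$, and a covering/separating comparison like the one above, carried out inside the $T_\a$-invariant set $Z$, shows that all the topological entropy is already carried by $Z$, whence $h_{top}(T_\a|_Z)>0$. I expect the main obstacle to be the quantitative bookkeeping of the scale $\d$: each passage—through the completely-positive-type factorization, through the Bowen-metric comparison, and through the restriction to $Z$—distorts $\d$ by multiplicative constants, and one must verify that these distortions never collapse the exponential rate to zero. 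Controlling this scale loss uniformly in $n$ is the technical heart of the argument, and it is precisely where the careful approximation lemmas of \cite{KL2005} are needed.
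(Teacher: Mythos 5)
You should first be aware that the paper offers no proof of this statement at all: it is quoted from Kerr--Li \cite{KL2005} and used as a black box, so the only thing to assess is whether your sketch would actually prove the cited theorem. It would not, for two concrete reasons. The first is the claimed ``two-sided sandwich'' between $rc$ and the Bowen covering numbers of $B_1(X^*)$. One half is fine: a finite cover of $B_1(X^*)$ by sets of $d_{\Omega_n}$-diameter $<\delta$ together with a subordinate partition of unity produces a contractive factorization through $l^d_\infty$ with $d$ equal to the cardinality of the cover, so $rc(\Omega_n,\delta)\le \mathrm{cov}(B_1(X^*),d^{\,n}_\Omega,\delta)$ and hence $hc(\a)\le h_{top}(T_\a)$, which gives $(1)\Rightarrow(2)$. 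But in the reverse direction a rank-$d$ factorization $\psi\circ\phi$ only yields, via the map $w\mapsto \delta_w\circ\psi\in B_1(l^d_1)$, the bound $\mathrm{cov}(B_1(X^*),d^{\,n}_\Omega,3\delta)\le (C/\delta)^d$, i.e.\ $\log \mathrm{cov}\lesssim rc$. This is exponentially weaker than what you need: if $h_{top}(T_\a)>0$, it forces $rc(\Omega_n,\delta)$ to grow only \emph{linearly} in $n$, whence $\frac1n\log rc\to 0$ and no conclusion about $hc(\a)$ follows. The gap is intrinsic --- the unit ball of $l^d_\infty$ itself has covering numbers exponential in $d$ --- and this is exactly why Kerr and Li do not argue this way: they pass through Rosenthal $\ell_1$ machinery, showing that $h_{top}(T_\a)>0$ produces an $\ell_1$-isomorphism set of positive density inside some orbit $\{\a^nx\}$, and that $k$ vectors uniformly equivalent to the $\ell_1$-basis cannot be $\delta$-factored through $l^d_\infty$ unless $d\ge e^{ck}$; only this exponential lower bound on $rc$ closes $(2)\Rightarrow(1)$.

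Second, the zero--infinity law does not follow from rescaling. Replacing $\Omega$ by $\lambda\Omega$ turns separation at scale $\delta$ for $d_{\lambda\Omega}$ into separation at scale $\delta/\lambda$ for $d_\Omega$, and the resulting rates increase, as $\lambda\to\infty$, only up to $\sup_{\epsilon>0}h(T_\a,d_\Omega,\epsilon)=h(T_\a,d_\Omega)\le h_{top}(T_\a)$: monotonicity in the scale can never push the growth rate past the entropy of the system itself, so this argument cannot manufacture $\infty$. The actual mechanism is convexity of $B_1(X^*)$: from a two-valued interpolation (independence) set of positive density $c$ one builds, by taking convex combinations of the interpolating functionals, $(k+1)$-valued interpolation on the same set for every $k$, giving $h_{top}(T_\a)\ge c\log(k+1)$ for all $k$. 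The same $\ell_1$/interpolation apparatus is also what lets one realize the separating functionals inside the norming set $Z$ for $(2)\Rightarrow(4)$; the observation that $Z$ is norming is necessary but, by itself, does not place the witnesses of entropy in $Z$. So the skeleton of implications is right and $(1)\Rightarrow(2)$, $(3)\Rightarrow(2)$, $(4)\Rightarrow(2)$ are correctly handled, but the three substantive implications all require the $\ell_1$-isomorphism characterization that your sketch replaces with estimates that are quantitatively too weak.
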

Note the $h_{top}$ denotes the usual topological entropy.


\begin{thm}{\cite[Proposition 3.1]{KL2005}}\label{teee}
Let $X$ be a compact Hausdorff space and $T : K\rightarrow  K$
a homeomorphism. Let $\a_T$ be the $^*$-automorphism of $C(X)$ given by
$\a_T ( f ) = f \circ T$ for all $f \in C(X)$. Then
$$ht(\a_T ) = hc(\a_T ) = h_{\rm top}(T ).$$
\end{thm}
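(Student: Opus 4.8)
The plan is to prove the two equalities by establishing $ht(\a_T)=h_{\rm top}(T)$ and $hc(\a_T)=h_{\rm top}(T)$ separately, after which the middle equality $ht(\a_T)=hc(\a_T)$ is automatic. Since the space is only compact Hausdorff I would use the Adler--Konheim--McAndrew definition through finite open covers, writing $N(\mathcal{U})$ for the least cardinality of a subcover of a finite open cover $\mathcal{U}$, $\mathcal{U}_n:=\bigvee_{i=0}^{n-1}T^{-i}\mathcal{U}$, and $h_{\rm top}(T)=\sup_{\mathcal{U}}\lim_n\tfrac1n\log N(\mathcal{U}_n)$. The unifying device is that $\ell^d_\infty$ is at once a finite--dimensional abelian $C^*$-algebra of rank $d$ and the target space in the definition of the $CA$ entropy, so a single ``evaluation plus partition of unity'' construction will feed both $ht$ and $hc$.

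First I would dispatch the two upper bounds $ht(\a_T)\le h_{\rm top}(T)$ and $hc(\a_T)\le h_{\rm top}(T)$ at once. Given a finite set $\w\subset C(X)$ and $\delta>0$, continuity of the finitely many functions in $\w$ yields a finite open cover $\mathcal{U}$ on each member of which every $f\in\w$ oscillates by less than $\delta$; then $\a_T^i\w=\{f\circ T^i\}$ oscillates by less than $\delta$ on each member of $T^{-i}\mathcal{U}$, so all of $\w\cup\a_T\w\cup\cdots\cup\a_T^{n-1}\w$ oscillates by less than $\delta$ on each member of $\mathcal{U}_n$. Choosing a minimal subcover $\{V_1,\dots,V_d\}$ of $\mathcal{U}_n$ (so $d=N(\mathcal{U}_n)$), points $x_l\in V_l$, and a subordinate partition of unity $\{h_1,\dots,h_d\}$, I set $\phi(f)=(f(x_1),\dots,f(x_d))\in\ell^d_\infty$ and $\psi(a_1,\dots,a_d)=\sum_l a_l h_l$. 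Both maps are contractive, and they are completely positive since $\phi$ is a tuple of point evaluations and $\psi$ is positive on the commutative domain $\ell^d_\infty$; moreover $\|\psi\circ\phi(g)-g\|<\delta$ for every $g$ in the iterated set. Hence $rcp$ and $rc$ of $\w\cup\cdots\cup\a_T^{n-1}\w$ at level $\delta$ are both $\le d=N(\mathcal{U}_n)$, and letting $n\to\infty$, then $\delta\to0$, then taking the supremum over $\w$ (equivalently over $\mathcal{U}$) gives both upper bounds.

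The substance lies in the lower bounds, and this is where I expect the main obstacle. The right way to read $rc(G,\delta)$ for $G=\bigcup_{i<n}\a_T^i\w$ is to unpack a contractive factorization through $\ell^d_\infty$: it amounts to measures $\nu_1,\dots,\nu_d\in C(X)^*$ of norm $\le1$ and functions $\eta_1,\dots,\eta_d\in C(X)$ with $\sum_p|\eta_p|\le1$ such that
$$g\ \approx\ \sum_{p=1}^{d}\langle\nu_p,\,g\rangle\,\eta_p \qquad (g\in G),$$
uniformly to within $\delta$. Thus $rc(G,\delta)$ is the least number of ``sample--and--reconstruct'' pieces that recover every $g\in G$ in sup norm, and I would emphasize that this must be bounded below by a spanning number of $\mathcal{U}_n$, which grows like $e^{nh_{\rm top}(T)}$. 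A warning I would flag explicitly: the naive route through pairwise separation fails, because $s$ dynamically $\epsilon$-separated points fit into a subspace of dimension only $\sim\log s$, so separation forces merely $d\gtrsim\log s$ (polynomial in $n$) rather than the $d\gtrsim e^{nh_{\rm top}(T)}$ needed for a positive entropy. The correct lower bound must instead exploit that the constraint is not the linear dimension of $\mathrm{span}(G)$ (which is at most $|\w|\,n$) but the contractive, absolute--sub--partition--of--unity structure $\sum_p|\eta_p|\le1$, which limits how many distinct dynamical itineraries a given set of $d$ pieces can simultaneously reconstruct.

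The hard part, then, is to convert ``contractive reconstruction of the generators $f_j\circ T^i$'' into a genuine cell count $d\ge N(\mathcal{U}_n)$, the difficulty being that the exponential, approximately orthogonal structure lives in the \emph{products} $\prod_{i=0}^{n-1}(g_{p_i}\circ T^i)$ indexing the cells of $\mathcal{U}_n$, whereas both contractive linear maps (for $hc$) and completely positive maps (for $ht$) destroy products. For $ht$ I would import Voiculescu's classical computation for commutative $C(X)$: its approximate--multiplicativity estimates for completely positive approximations show that reproducing the nearly orthogonal positive elements attached to the cells forces the maximal abelian subalgebra of $B$ to have dimension $\ge N(\mathcal{U}_n)$, giving $ht(\a_T)\ge h_{\rm top}(T)$. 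For $hc$, lacking positivity, I would run the analogous argument using only the absolute sub--partition structure of the $\eta_p$ and a pigeonhole over a maximal set of distinguishable itineraries to force $d\ge N(\mathcal{U}_n)$, yielding $hc(\a_T)\ge h_{\rm top}(T)$. Combined with the upper bounds this closes both equalities. I would finally remark that Theorem \ref{two entropy equivalent}, being a zero/positivity dichotomy for the auxiliary weak$^*$ system $T_{\a_T}$, is consistent with but insufficient for the present exact equality, so the quantitative matching genuinely rests on the cover--versus--rank comparison sketched above.
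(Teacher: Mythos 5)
First, a point of order: the paper does not prove this statement at all --- Theorem \ref{teee} is imported verbatim from \cite[Proposition 3.1]{KL2005} as background, so the benchmark for your attempt is Kerr--Li's argument, not anything in this text. Your treatment of the upper bounds is complete and correct, and it is the standard one: the evaluation-plus-partition-of-unity triple $(\phi,\psi,\ell^d_\infty)$ with $d=N(\mathcal{U}_n)$ is simultaneously a contractive completely positive approximation and a contractive linear factorization, giving $ht(\a_T)\le h_{\rm top}(T)$ and $hc(\a_T)\le h_{\rm top}(T)$ at once. Your warning about the lower bounds is also genuinely insightful: counting $(n,\ep)$-separated points only yields $d\gtrsim \log(\mathrm{sep}_n)\sim n$, hence a vanishing exponential rate. (One caveat: for $ht$ this naive route does \emph{not} fail --- Voiculescu's lower bound works precisely by attaching pairwise orthogonal positive norm-one functions to separated points and invoking a rank rigidity lemma for c.p.c.\ factorizations, so separation counts suffice there because positivity converts ``many orthogonal elements'' into ``large rank''. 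It is exactly this positivity rigidity that $hc$ lacks.)

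The genuine gap is the lower bound $hc(\a_T)\ge h_{\rm top}(T)$, where your proposed ``pigeonhole over a maximal set of distinguishable itineraries to force $d\ge N(\mathcal{U}_n)$'' founders on the very obstruction you flagged: a single sample-and-reconstruct pair $(\nu_p,\eta_p)$ can serve exponentially many itineraries. Concretely, if the coordinate vector $w_p=(\phi(g))_p$, $g$ ranging over the orbit set, is (say) constant of modulus near $1$, it is consistent with essentially every itinerary, and no counting of itineraries against $d$ gets off the ground; this is why the separation bound collapses to $\log$. The working mechanism, due to Kerr--Li, is combinatorial independence: one extracts, via Sauer--Shelah/Karpovsky--Milman-type lemmas, interpolation families of times on which orbit functions realize all $\pm 1$ patterns; one then applies the contractive factorization not to single generators but to $\ell^1$-controlled signed combinations $u_\sigma=\sum_i \sigma_i\, g\circ T^i$ (linearity and contractivity preserve the error budget $\delta\cdot\|\text{coefficients}\|_1$), so that each dual point $z_x=\psi^*y_x^*\in B_1(\ell^d_1)$ must correlate near-perfectly with its own sign pattern; finally a Hoeffding/Hamming-ball count shows each of the $d$ columns can serve at most $2^{H(C\delta)m}$ of the $2^m$ patterns, forcing $d\ge 2^{(1-H(C\delta))m}$. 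Even granting all this, your sketch would only deliver positivity, or a constant fraction of $h_{\rm top}(T)$: the exact equality additionally exploits the supremum over $\w$ in the definition of $hc$ --- one passes to block functions over long time-blocks (an enlarged alphabet of $\approx e^{Lh}$ symbols), choosing $\delta$ first and then $L$ large, to drive the per-symbol combinatorial losses below any $\ep$. None of this machinery is hinted at beyond the phrase ``pigeonhole'', so the $hc$ half of the statement remains unproved. (A cosmetic slip: the $\eta_p$ live in the CA-embedding target, e.g.\ $C(B_1(C(X)^*))$, not in $C(X)$; restricting to the Dirac functionals recovers your normalization, so this is harmless.) Your closing remark is correct: Theorem \ref{two entropy equivalent} is a zero--positive dichotomy and cannot yield the quantitative equality.
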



\medskip

In general there is no inequality relating contractive approximation entropy and Voiculescu-Brown
entropy. But if the Voiculescu-Brown entropy is zero then contractive approximation entropy is also
zero \cite[Proposition 4.2.]{KL2005}. An interesting question \cite[Question 4.5]{KL2005} is as follows: {\em
Is there a $^*$-automorphism of an exact $C^*$-algebra for which
the Voiculescu-Brown entropy is strictly greater than the contractive approximation entropy?}


\subsection{Outline of proofs of equivalences of Sarnak Conjecture and its $C^*$-versions}\

\medskip

Since the basic ideas are similar, we only outline the proof of the equivalence of original Sarnak conjecture
and contractive approximation entropy version. The content of this subsection is pointed out to us by Hanfeng Li \cite{Li}.

By analysis in subsection \ref{Sarnak C-version} and Theorem \ref{teee} one has that
Sarnak Conjecture (Contractive approximation entropy version) implies original Sarnak conjecture. Now we show the converse. Let $\mathfrak{U}$ be a
$C^*$-algebra, and let $\a$ be a $^*$-automorphism of $\mathcal{U}$ with zero contractive approximation entropy.
Consider the state space $\mathcal{S}$ of $\mathfrak{U}$. It is well known that $\mathcal{S}$ is compact Hausdorff space under weak$^*$
topology. Define $T:\mathcal{S}\rightarrow \mathcal{S}$ as $T(\rho)=\rho\circ \a$. Then $T$ is a homeomorphism, and
$(\mathcal{S},T)$ is a topological dynamical system. By Theorem \ref{two entropy equivalent}
(i.e. \cite[Theorem 3.5]{KL2005}) the topological entropy of $(\mathcal{S},T)$ is zero, as the contractive approximation entropy of $\a$ is zero. Since for each $u\in \mathfrak{U}$, $u(\rho)=\rho(u)$
is a continuous function on $\mathcal{S}$, if Sarnak conjecture holds, it follows that
$$\lim_{N\rightarrow \infty}\frac{1}{N}\sum_{n=1}^N\mu(n)\rho(\a^nu)=\lim_{N\rightarrow \infty}\frac{1}{N}\sum_{n=1}^N\mu(n)u(T^n\rho)=0$$
for each state $\rho$ on $\mathfrak{U}$ and each $u \in \mathfrak{U}$, that is,
Sarnak Conjecture (Contractive approximation entropy version) holds.
\medskip


\subsection{Unitary representation}\

\medskip

We write $\B(\H)$ for the algebra of all bounded linear operators on
Hilbert space $\H$, and $U(\H)$ for the group of unitary operators on $\H$. Let $G$ be a topological group. A {\em unitary representation} of
$G$ on some nonzero Hilbert space $\H$ is a map $\pi: G \rightarrow  U(\H)$ which is a continuous (with respect to the strong operator topology) homomorphism, that is, a map $\pi: G \rightarrow  U(\H)$ that satisfies $\pi(xy)=\pi(x)\pi(y)$ and $\pi(x^{-1})=\pi(x)^{-1}=\pi(x)^*$, and for which $x\mapsto \pi(x)u$ is continuous from $G$ to $\H$ for any $u\in \H$.

\subsection{GNS construction}\

\medskip

The {\em Gelfand-Neumark-Segal construction} is one of the basic tools of $C^*$-algebra theory.
One may found it in most monograph on $C^*$-algebras (see for example, {\cite[Theorem 4.5.2.]{KadisonRingrose}}).

By a {\em representation} of a $C^*$-algebra $\mathfrak{U}$ on a Hilbert space $\mathcal{H}$, we mean a $^*$ homomorphism $\varphi$ from $\mathfrak{U}$ into $\B(\H)$. If, in addition, $\varphi$ is one-to-one (hence, a $^*$ isomorphism), it is described as a {\em faithful representation}. If there is a vector $x$ in $\H$ for which the linear subspace $\varphi(\mathfrak{U})x=\{\varphi(A)x: A\in \mathfrak{U}\}$ is everywhere dense in $\H$, $\varphi$ is called a {\em cyclic representation}, and $x$ is called a {\em cyclic vector} (or generating vector) for $\varphi$.

\begin{thm}\label{GNS-construction}
If $\rho$ is a state of a $C^*$-algebra $\mathfrak{U}$, there is a cyclic representation
$\pi_{\rho}$ of $\mathfrak{U}$ on a Hilbert space $\mathcal{H}_\rho$, and a unit cyclic vector
$x_\rho\in \mathcal{H}_\rho$ for $\pi_\rho$ such that
$$\rho(u)=\langle \pi_\rho(u)x_\rho,x_\rho \rangle$$
for each $u\in \mathfrak{U}$.
\end{thm}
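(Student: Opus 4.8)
The plan is to carry out the standard GNS construction. First I would use the state $\rho$ to define a sesquilinear form on $\mathfrak{U}$ by $\langle a,b\rangle_\rho = \rho(b^*a)$. Since $\rho$ is positive, this form is positive semidefinite, so the Cauchy--Schwarz inequality $|\rho(b^*a)|^2 \le \rho(a^*a)\rho(b^*b)$ holds. I would then set $N_\rho = \{a\in \mathfrak{U}: \rho(a^*a)=0\}$ and show, using Cauchy--Schwarz, that $N_\rho$ is a closed left ideal: it is a linear subspace because the form is semidefinite, and it is absorbing under left multiplication because $\rho((ca)^*(ca)) = \rho(a^*c^*ca)\le \|c\|^2 \rho(a^*a)$, which follows from the operator inequality $c^*c\le \|c\|^2 \mathbf{1}$ (taken in the unitization if $\mathfrak{U}$ is non-unital) together with the positivity of $\rho$.

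Second, I would form the quotient vector space $\mathfrak{U}/N_\rho$, which inherits a genuine inner product $\langle a+N_\rho,\, b+N_\rho\rangle = \rho(b^*a)$ — well-defined and positive definite precisely because $N_\rho$ is the null space of the form — and let $\mathcal{H}_\rho$ be its Hilbert space completion. The representation $\pi_\rho$ is then the left-regular representation $\pi_\rho(c)(a+N_\rho)=ca+N_\rho$. I would check that this is well-defined (since $N_\rho$ is a left ideal), that it is bounded with $\|\pi_\rho(c)\|\le \|c\|$ (again from $c^*c\le\|c\|^2\mathbf{1}$ and positivity of $\rho$, giving $\|\pi_\rho(c)(a+N_\rho)\|^2=\rho(a^*c^*ca)\le\|c\|^2\rho(a^*a)$), so that $\pi_\rho(c)$ extends to a bounded operator on $\mathcal{H}_\rho$, and that $c\mapsto \pi_\rho(c)$ is linear, multiplicative, and $*$-preserving. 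The last point is the computation $\langle \pi_\rho(c)(a+N_\rho),\, b+N_\rho\rangle = \rho(b^*ca) = \langle a+N_\rho,\, \pi_\rho(c^*)(b+N_\rho)\rangle$, which identifies $\pi_\rho(c)^*$ with $\pi_\rho(c^*)$.

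Third comes the cyclic vector and the reconstruction formula, and this is where the only real subtlety lies. If $\mathfrak{U}$ is unital, I would simply take $x_\rho = \mathbf{1}+N_\rho$; then $\pi_\rho(c)x_\rho = c+N_\rho$ makes $\pi_\rho(\mathfrak{U})x_\rho$ equal to the dense subspace $\mathfrak{U}/N_\rho$, gives $\|x_\rho\|^2=\rho(\mathbf{1})=1$, and yields $\langle \pi_\rho(c)x_\rho,\, x_\rho\rangle = \rho(\mathbf{1}^*c\,\mathbf{1}) = \rho(c)$, exactly as required. The main obstacle is the non-unital case, where no obvious candidate for $x_\rho$ sits in $\mathfrak{U}/N_\rho$: there I would fix an approximate identity $(e_\lambda)$ for $\mathfrak{U}$ and show that $(e_\lambda+N_\rho)$ is a Cauchy net in $\mathcal{H}_\rho$ whose limit $x_\rho$ is a unit cyclic vector satisfying $\langle \pi_\rho(c)x_\rho,\, x_\rho\rangle = \lim_\lambda \rho(e_\lambda c\, e_\lambda) = \rho(c)$; controlling these limits via Cauchy--Schwarz and the defining property of the approximate identity is the delicate technical point. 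Since the theorem is applied in this paper only to the (unital) noncommutative tori $A_\Theta$, the unital argument above already suffices for our purposes.
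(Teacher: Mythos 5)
Your proposal is correct and is precisely the standard GNS construction; the paper itself gives no proof of this theorem, simply citing Kadison--Ringrose (Theorem 4.5.2), whose argument is the one you outline. Your handling of the unital versus non-unital cases is sound, and as you note, the unital argument already covers the paper's application to the unital algebras $A_\Theta$.
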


The method used to produce a representation from a state in the proof of above theorem is called {\em GNS construction}.

\section{Nilsequences and almost nilsequences}\label{section-nilsequence}

Nilsequences were introduced in \cite{BHK05} for studying some problems in ergodic theory and additive
combinatorics that arose from the multiple ergodic averages introduced by Furstenberg in his ergodic
theoretic proof of Szemer\'{e}di's Theorem. Nilsequences are natural generalization of almost periodic
sequences, and they are systematically studied by lots of mathematicians from then on. For more details about nilsequences, please refer to \cite{BHK05, HK08, HKM, L2015} etc.

\subsection{Nilsequences}\

\medskip

\subsubsection{Almost periodic sequences}\

\medskip

Let ${\bf a}=\{a_n\}_{n\in \Z}$ be a sequence of  complex numbers. We use $\sigma {\bf a}$ denote the shifted sequence.
Thus for $k\in \Z $, $\sigma^ k {\bf a}= \{a_{n+k}\}_{n\in \Z}$.
A bounded sequence ${\bf a}=\{a_n\}_{n\in \Z}$  of  complex numbers is called {\em almost periodic sequence} if  there exit a
compact abelian group $G$, elements $x,\tau$ of $G$, and a continuous function $f:G\rightarrow \C$ such
that $a_n=f(\tau^n x)$. It is well known that a bounded sequence ${\bf a}$ of complex numbers is almost periodic if and only if the
family of translated sequences $\{\sigma^k {\bf a}: k\in \Z\}$ is relatively compact  in $l^\infty(\Z)$ under the $l^\infty$-norm.

The family of almost periodic sequences is a sub-algebra of $l^\infty(\Z)$ that is invariant under complex
conjugation, shift and uniform limits. Denote the family of almost periodic sequences by $\mathcal{AP}$.

\subsubsection{Nilsystems}\

\medskip

Let $G$ be a group. For $g, h\in G$, we write $[g, h] =
g^{-1}h^{-1}gh$ for the commutator of $g$ and $h$ and we write
$[A,B]$ for the subgroup spanned by $\{[a, b] : a \in A, b\in B\}$.
The commutator subgroups $G_j$, $j\ge 1$, are defined inductively by
setting $G_1 = G$ and $G_{j+1} = [G_j ,G]$. Let $d \ge 1$ be an
integer. We say that $G$ is {\em $d$-step nilpotent} if $G_{d+1}$ is
the trivial subgroup.

\medskip

Let $G$ be a $d$-step nilpotent Lie group and $\Gamma$ a discrete
cocompact subgroup of $G$. The compact manifold $X = G/\Gamma$ is
called a {\em $d$-step nilmanifold}. The group $G$ acts on $X$ by
left translations and we write this action as $(g, x)\mapsto gx$.
Let $\tau\in G$ and $T$ be the
transformation $x\mapsto \tau x$ of $X$. Then $(X, T)$ is
called a {\em $d$-step nilsystem}. It is easy to see that the product of two $d$-step nilsystems is still a $d$-step nilsystem.

\medskip

Let $Y$ be the closed orbit of some point $x\in X$ under $T$. Then $Y$ can
be given the structure of a nilmanifold, $Y = H/\Lambda$, where $H$
is a closed subgroup of $G$ containing $\tau$ and $\Lambda$ is a closed
cocompact subgroup of $H$. For more details please refer to \cite{L2005}, for example.

\subsubsection{Nilsequencs}

\begin{de}\cite{BHK05}
Let $d\ge 1$ be an integer and let $X = G/\Gamma$ be a
$d$-step nilmanifold. Let $\phi$ be a continuous real (or complex)
valued function on $X$ and let $\tau \in G$ and $x \in X$. The sequence
$\{\phi(\tau^n\cdot x)\}_{n\in \mathbb{Z}}$ is called a {\em  basic $d$-step nilsequence}.
A {\em $d$-step nilsequence} is a uniform limit of basic $d$-step
nilsequences.

\medskip

We denote the set of basic $d$-step nilsequences by $\mathcal{N}_d^0$, and the set of $d$-step nilsequences by $\mathcal{N}_d$. Note that $\mathcal{N}_d$ is the closure of $\mathcal{N}_d^0$ in $l^\infty(\Z)$
under the $l^\infty$-norm.
\end{de}

\begin{de}
Let $\mathcal{N}^0=\bigcup_{d\in \N} \mathcal{N}_d^0$ be the set of all basic nilsequences, and let the closure of $\mathcal{N}^0$ by $\mathcal{N}$ in $l^{\infty}(\Z)$ under the $l^\infty$-norm. The elements of $\mathcal{N}$ will be called {\em nilsequences}.
\end{de}

\begin{rem}
\begin{enumerate}
  \item Notice that a 1-step nilsequence is nothing but an almost periodic sequence.

  \item We cite the following sentences from \cite{HK08} to explain why one needs to take uniform
  limits: ``There is a technical difficulty that explains the need to take uniform limits of basic
  nilsequences, rather than just nilsequences. Namely, $\mathcal{AP}$ is closed under the uniform norm,
  while the family of basic nilsequences is not. An inverse limit of rotations on compact abelian Lie
  groups is also a rotation on a compact abelian group, but the same result does not hold for nilsystems:
  the inverse limit of a sequence  of nilmanifolds is not, in general, the homogeneous space of some
  locally compact abelian group.''
  
  \item In the definitions of nilmanifolds and nilsequences, one may require the nilpotent groups to be connected and simply connected as done in \cite{GT2010, GT2012-2}. For the reason why we can require these hypothesis, please see \cite{GT2010, L2005}.     
\end{enumerate}
\end{rem}

Since the family of nilsystems is closed under Cartesian products, one can verify the following directly.

\begin{prop}\cite{HK08}
\
For all $d\in\N$,
$\mathcal{N}_{d}^0$, $\mathcal{N}^0$, $\mathcal{N}_d$ and $\mathcal{N}$ are subalgebras of $l^\infty(\Z)$ that are invariant under complex conjugation and shift. And $\mathcal{N}_d$ and $\mathcal{N}$ are also invariant under uniform limits.
\end{prop}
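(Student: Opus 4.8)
The plan is to establish every stated property first for the basic classes $\mathcal{N}_d^0$ and $\mathcal{N}^0$ by an explicit product construction, and then to transfer the subalgebra structure to the uniform closures $\mathcal{N}_d$ and $\mathcal{N}$ by a routine continuity argument. The one substantive input is the fact already recorded in the text that the Cartesian product of two $d$-step nilsystems is again a $d$-step nilsystem; at the group level this is the elementary observation that if $G$ and $H$ are $d$-step nilpotent then $(G\times H)_{j}=G_{j}\times H_{j}$, so $(G\times H)_{d+1}$ is trivial.

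First I would treat $\mathcal{N}_d^0$. Let $a_n=\phi(\tau^n\cdot x)$ and $b_n=\psi(\eta^n\cdot y)$ be two basic $d$-step nilsequences arising from nilmanifolds $X=G/\Gamma$ and $Y=H/\Lambda$. Passing to the product nilmanifold $X\times Y=(G\times H)/(\Gamma\times\Lambda)$, which is again $d$-step, I would realize $a_n+b_n$ by the continuous function $(x',y')\mapsto\phi(x')+\psi(y')$ and $a_nb_n$ by $(x',y')\mapsto\phi(x')\psi(y')$, in each case with the translation by $(\tau,\eta)$ and base point $(x,y)$. Scalar multiples $ca_n$ and the conjugate $\overline{a_n}=\overline{\phi}(\tau^n\cdot x)$ are immediate, since $c\phi$ and $\overline{\phi}$ are again continuous on $X$. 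Shift invariance is just the identity $\sigma^k\mathbf{a}=\{\phi(\tau^n\cdot(\tau^k x))\}_{n\in\Z}$, obtained by moving the base point to $\tau^k x$. This shows $\mathcal{N}_d^0$ is a shift- and conjugation-invariant subalgebra. For $\mathcal{N}^0=\bigcup_{d}\mathcal{N}_d^0$ I would note that a $d_1$-step nilmanifold is also a $d_2$-step nilmanifold whenever $d_2\ge d_1$, so any two basic nilsequences can be viewed as basic $d$-step nilsequences for a common $d$, reducing to the previous case.

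For the closures, $\mathcal{N}_d$ and $\mathcal{N}$ are by definition the $l^\infty(\Z)$-closures of $\mathcal{N}_d^0$ and $\mathcal{N}^0$, so they are closed and hence trivially invariant under uniform limits. To see they remain subalgebras I would use that addition, scalar multiplication and conjugation are $l^\infty$-continuous, while multiplication, although not continuous on all of $l^\infty(\Z)$, is jointly continuous on norm-bounded sets through the estimate
\[
\|fg-f'g'\|_\infty\le\|f\|_\infty\,\|g-g'\|_\infty+\|g'\|_\infty\,\|f-f'\|_\infty .
\]
Since nilsequences are uniformly bounded, whenever two elements of $\mathcal{N}_d$ are approximated uniformly by basic $d$-step nilsequences their sum and product are as well, so $\mathcal{N}_d$, and likewise $\mathcal{N}$, is a subalgebra; and $\sigma$, being an isometry of $l^\infty(\Z)$, preserves the closures.

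The whole argument is essentially bookkeeping once the product-of-nilsystems fact is granted. The only place demanding a little care is the multiplicative structure of the closure, where one must invoke the uniform boundedness of nilsequences to obtain continuity of multiplication on the relevant bounded set; I do not anticipate any genuine obstacle.
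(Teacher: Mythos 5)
Your argument is correct and follows exactly the route the paper intends: it cites closure of nilsystems under Cartesian products and leaves the verification as "direct," which is precisely the product-nilmanifold construction, base-point shift, and bounded-set continuity of multiplication that you spell out. No gaps; the only point requiring care (uniform boundedness to pass multiplicativity to the closure) is one you handle explicitly.
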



\subsubsection{Characterizations of nilsequences}\

\medskip

To get better understanding of the notion of nilsequences, we cite some characterizations of nilsequences in this subsection. {\em Note that we will not use these characterizations in the paper}.

\medskip

It is well known that a sequence is almost periodic if and only if it is a uniform limit of linear combinations of exponential sequences. There is a similar characterization for $2$-step nilsequences \cite{HK08}.

\medskip
First we need define some special $2$-step nilsequences.
For $t\in \T$, a {\em quadratic exponential sequence $q(t)=\{q_n(t)\}_{n\in \Z}$} is defined by
\begin{equation*}
    q_n(t)=e\Big(\frac{n(n-1)}{2}t\Big) ,\ \forall n\in \Z,
\end{equation*}
where $e(t)=\exp (2\pi i t)$.
For $s,t\in \T$, we set
\begin{equation*}
    \kappa(s,t)=\sum_{k\in \Z} \exp (-\pi(t+k)^2) e(ks),
\end{equation*}
and define the sequence arising from Heisenberg nilsystems $\w(\a,\b)=\{\w_n(\a,\b)\}_{n\in \Z}$ as follows:
\begin{equation*}
    \w_n(\a,\b)=\kappa(n\a,n\b) e\Big(\frac{n(n-1)}{2}\a\b\Big), \ \forall n\in \Z .
\end{equation*}

We write $\mathcal{M}$ for the family of sequences of the form
\begin{equation*}
    e(s)q(t)\w(\a_1,\b_1)\cdot \ldots \cdot \w(\a_d,\b_d)
\end{equation*}
where $s,t\in \T, d\ge 0$ is an integer and $\a_1,\ldots,\a_d,\b_1,\ldots,\b_d$ are real numbers that are rationally independent modulo 1.

\begin{thm}\cite[Theorem 3.11 and Theorem 3.12]{HK08}\label{Nil2}
The space of $2$-step nilsequences $\mathcal{N}_2$ is the closed shift invariant linear space spanned by the family $\mathcal{M}$.

The linear span of the family $\mathcal{M}$ is dense in the space $\mathcal{N}_2$ under the quadratic norm, where the {\em quadratic norm} of the nilsequences $a$ is
\begin{equation*}
    \|a\|_2=\lim_{N\to \infty} \Big(\frac{1}{N}\sum_{n=0}^{N-1}|a_n|^2\Big)^{1/2}.
\end{equation*}
\end{thm}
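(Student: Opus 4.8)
The plan is to prove the two assertions separately, treating the identification of $\mathcal{N}_2$ with the closed shift-invariant linear span of $\mathcal{M}$ as a structural fact about $2$-step nilsystems, and the quadratic-norm density as a softer $L^2$ statement. First I would dispose of the easy inclusion $\overline{\mathrm{span}}(\mathcal{M})\subseteq\mathcal{N}_2$. Each generator of $\mathcal{M}$ is itself a basic $2$-step nilsequence: the exponential $\{e(ns)\}$ comes from a circle rotation; the quadratic exponential $q(t)$ is realized on the $2$-step skew product $(x,y)\mapsto(x+t,\,y+x)$ on $\T^2$, since the second coordinate of the orbit of $(0,0)$ is $\binom{n}{2}t$ and $e(\binom{n}{2}t)=q_n(t)$; and $\w(\a,\b)$ is by construction read off a Heisenberg nilsystem. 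Because the product of two $2$-step nilsystems is again $2$-step, any finite product as in the definition of $\mathcal{M}$ is a basic $2$-step nilsequence. Since the cited Proposition gives that $\mathcal{N}_2$ is a closed, shift-invariant linear subspace of $l^\infty(\Z)$, its closed shift-invariant linear span contains the closed span of $\mathcal{M}$.

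The substance is the reverse inclusion. Given a basic $2$-step nilsequence $a_n=\phi(\tau^n\cdot x)$ with $X=G/\Gamma$, I would first pass to the orbit closure $Y=\overline{\{\tau^n x\}}$, a sub-nilmanifold on which the translation acts minimally, so that without loss of generality the system is minimal and uniquely ergodic with Haar measure $m$. I would then invoke the structure of minimal $2$-step nilsystems: $Y$ is a group extension of its maximal equicontinuous (Kronecker) factor — a torus rotation supplying the exponential factors — by the central compact abelian group attached to $G_2=[G,G]$. Decomposing $\phi$ by Peter--Weyl along this central fiber writes it as a uniform limit of finite sums of functions that are characters in the vertical direction. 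For each such vertical character the evolution of the central coordinate of the orbit is governed by a bilinear/quadratic expression in the base parameters, precisely because $G$ is $2$-step: restricted to a single rotation direction this yields the quadratic exponentials $q(t)$, while the interaction of two rationally independent directions through the commutator produces, via the Weil--Brezin (theta) correspondence for the Heisenberg manifold, exactly the kernel $\kappa$ and hence the factors $\w(\a_i,\b_i)$. Reducing a general $2$-step nilmanifold to products of Heisenberg manifolds and tori, and sorting out rational relations among the parameters (which only collapse $\mathcal{M}$ onto lower-dimensional members), would complete this direction.

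For the quadratic-norm statement I would use unique ergodicity: for a minimal nilsystem the orbit equidistributes with respect to $m$, so $\|a\|_2^2=\lim_N\frac1N\sum_{n<N}|\phi(\tau^n x)|^2=\int_Y|\phi|^2\,dm=\|\phi\|_{L^2(Y,m)}^2$. Thus density of $\mathrm{span}(\mathcal{M})$ in $(\mathcal{N}_2,\|\cdot\|_2)$ reduces to an $L^2$-completeness statement: the vertical Fourier and theta systems are complete in the $L^2$ of each fiber, so their linear combinations are $L^2$-dense. This is genuinely weaker than the first part and requires no uniform equidistribution.

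I expect the main obstacle to be the reverse inclusion in the first statement, and within it the demand for \emph{uniform} rather than merely $L^2$ approximation. The conceptual crux is the explicit harmonic analysis on the Heisenberg nilmanifold — identifying the vertical Fourier coefficients of a continuous function with the theta kernel $\kappa(s,t)$ and controlling them in the sup norm — together with the reduction of an arbitrary $2$-step nilmanifold to products of Heisenberg pieces and tori. By comparison, the bookkeeping for rational dependence among the $\a_i,\b_i$ and the passage to uniform limits are routine.
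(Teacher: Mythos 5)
This theorem is quoted by the paper from Host--Kra \cite[Theorems 3.11 and 3.12]{HK08} and is explicitly not proved (nor used) there, so there is no in-paper argument to compare against; your proposal has to be judged against the actual Host--Kra proof. Your architecture does match theirs: the easy inclusion via closure of $\mathcal{N}_2$ under products and shifts; reduction to a minimal, uniquely ergodic orbit closure; decomposition of $\phi$ into vertical characters of the central torus coming from $G_2$; identification of the nontrivial vertical-frequency pieces with theta/Weil--Brezin data on Heisenberg nilmanifolds; and the observation that unique ergodicity converts the quadratic seminorm into $\|\phi\|_{L^2(Y,m)}$, so that the second statement reduces to $L^2$-completeness of the vertical Fourier and theta systems. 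The computation $e\bigl(\tbinom{n}{2}t\bigr)=q_n(t)$ on the skew product and the remark that the quadratic statement is genuinely weaker are both correct.

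However, as a proof the proposal has two genuine gaps, both located exactly where you say the "crux" is, which means the reverse inclusion is not actually established. First, the reduction of an arbitrary minimal $2$-step nilsystem to (inverse limits of) quotients of products of Heisenberg manifolds and tori is a nontrivial structure theorem: one must handle disconnected $G$, pass to a presentation in which $G_2$ is a torus acting freely in the fibers, and show that a function with a fixed nontrivial vertical character lives on a system covered by a product of Heisenberg systems; asserting "reducing a general $2$-step nilmanifold to products of Heisenberg manifolds and tori" does not do this. Second, and more seriously, the $L^2$ theory (Stone--von Neumann, density of Gaussian translates) gives only quadratic-norm approximation; the first statement of the theorem requires that finite linear combinations of translates of $\kappa$ approximate an arbitrary continuous section with nontrivial vertical frequency in the \emph{uniform} norm. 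This is the analytic heart of Host--Kra's argument (it uses the rapid decay of the theta kernel and density of smooth vectors in the sup norm), and your proposal names it as an obstacle without supplying an argument. Until those two steps are filled in, only the easy inclusion and the quadratic-norm reduction are proved.
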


\medskip

For $d$-step nilsequences ($d\ge 3$), it is unknown whether it has a characterization like Theorem \ref{Nil2}. But one may characterize nilsequences in another way as follows.

\medskip

For a sequence ${\bf a} = \{a_n\}_{n\in \Z}$ of  complex numbers, integers $k, j,L$, and a real number $\d >0$, if each entry in the interval $[k-L, k+L]$ is equal to the corresponding entry in the interval $[j-L,j+L]$ up
to an error of $\d$, then we write
\begin{equation*}
  {\bf a}_{[k-L,k+L]}=_\d {\bf a}_{[j-L,j+L]}.
\end{equation*}
The characterization of almost periodic sequences by
compactness can be formulated as follows:
The bounded sequence ${\bf a} = \{a_n\}_{n\in \Z}$ is almost periodic
if and only if for all $\ep>0$, there exist an integer $L\ge 1$ and a real $\d>0$ such that for any
$k,n_1,n_2 \in \Z$ whenever ${\bf a}_{[k-L,k+L]}=_\d {\bf a}_{[k+n_1-L,k+n_1+L]}$ and ${\bf a}_{[k-L,k+L]}=_\d {\bf a}_{[k+n_2-L,k+n_2+L]}$, then $|a_k-a_{k+n_1+n_2}|<\ep$. In general, Host and Kra have the following theorem:

\begin{thm}\cite[Theorem 1.1]{HKM}
Let ${\bf a} = \{a_n\}_{n\in \Z}$ be a bounded sequence of  complex numbers and $d\ge 2$ be an integer. The sequence ${\bf a}$ is a $(d-1)$-step nilsequence if and only if for all $\ep>0$, there exist an integer $L\ge 1$ and a real number $\d>0$ such that for any $(n_1,\ldots, n_d)\in \Z^d$ and
$k\in\Z$, whenever
$${\bf a}_{[k+\ep_1n_1+\ldots+\ep_dn_d-L,k+\ep_1n_1+\ldots+\ep_dn_d+L]}=_\d {\bf a}_{[k-L,k+L]}$$
for all choices of $\ep_1,\ldots, \ep_d\in \{0,1\}$ other than $\ep_1=\ldots=\ep_d=1$, then we have $|a_k-a_{k+n_1+\ldots +n_d}|<\ep$.
\end{thm}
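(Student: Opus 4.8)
The plan is to prove both directions by realizing $\mathbf{a}$ inside a topological dynamical system and then appealing to the Host--Kra--Maass theory of the higher order regionally proximal relations. First I would attach to $\mathbf{a}$ the orbit closure $X=\overline{\{\sigma^k\mathbf{a}:k\in\Z\}}$ in $l^\infty(\Z)$ under the shift $\sigma$, metrized so that two points are close exactly when their central coordinate intervals agree up to a small error; then $(X,\sigma)$ is a compact metric system. Writing $F:X\to\C$ for the evaluation $F(\mathbf{b})=b_0$, we have $a_n=F(\sigma^n\mathbf{a})$, so $\mathbf{a}$ is realized in $(X,\sigma)$. Under this dictionary ``$\mathbf{a}$ is a $(d-1)$-step nilsequence'' becomes ``$F$ factors through an inverse limit of $(d-1)$-step nilsystem factors of $(X,\sigma)$'', while the hypothesis ${\bf a}_{[\cdots]}=_\d{\bf a}_{[k-L,k+L]}$ becomes a genuine $\delta$--$\epsilon$ closing condition on the dynamical cubes of $(X,\sigma)$.

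Next I would introduce the Host--Kra cube gadgets: the cube sets $\mathbf{Q}^{[m]}(X)\subseteq X^{2^{m}}$, namely the orbit closure of the diagonal under the face transformations, and the regionally proximal relation of order $m$, $\RP^{[m]}$, where a pair $(x,y)$ lies in $\RP^{[m]}$ roughly when $x$ and $y$ can occupy two vertices of an arbitrarily close cube in $\mathbf{Q}^{[m+1]}$ whose remaining vertices coincide. The engine of the argument is the Host--Kra--Maass structure theorem \cite{HKM}: for a minimal distal system the quotient $X/\RP^{[m]}$ is the maximal $m$-step nilfactor, an inverse limit of $m$-step nilsystems. Taking $m=d-1$, the interval-matching hypotheses say exactly that the $2^{d}-1$ indicated vertices of a $d$-dimensional cube are dynamically close to the base vertex, and the desired conclusion $|a_k-a_{k+n_1+\cdots+n_d}|<\ep$ says that the remaining vertex is then $\RP^{[d-1]}$-close to them.

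With this translation the two implications are short. For necessity, a $(d-1)$-step nilsequence is realized in an inverse limit of $(d-1)$-step nilsystems, on which $\RP^{[d-1]}$ is trivial; the rigidity of the cube structure there, together with uniform continuity of $F$ on the compact space $X$, furnishes for each $\ep$ the required $L$ and $\d$, and the uniform-limit passage from basic nilsequences preserves the estimate. For sufficiency, the substantive direction, the closing condition says precisely that $F$ takes (up to arbitrarily small error) the same value on $\RP^{[d-1]}$-related points, hence that $F$ descends to $X/\RP^{[d-1]}$; by the structure theorem this quotient is an inverse limit of $(d-1)$-step nilsystems, so $\mathbf{a}=\{F(\sigma^n\mathbf{a})\}_{n\in\Z}$ is a uniform limit of basic $(d-1)$-step nilsequences.

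The hard part will be two reductions concealed in the setup. First, the orbit closure of an arbitrary bounded sequence is in general neither minimal nor distal, whereas the structure theorem identifying $X/\RP^{[m]}$ with a nilfactor requires minimality and distality; so I must pass first to a minimal subsystem and then to its maximal distal factor, and verify that the closing condition, being a property of the sequence and hence of all of $X$, descends correctly and that $F$ still factors through the reduced system. Second, and most delicate, is the structure theorem itself, namely that the combinatorially defined $\RP^{[d-1]}$ is a closed invariant equivalence relation whose quotient genuinely carries the homogeneous-space structure of an inverse limit of nilsystems; this is exactly the deep input of Host--Kra--Maass, and essentially all the difficulty of the theorem resides there.
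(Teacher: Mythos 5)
This statement is quoted verbatim from Host--Kra--Maass \cite{HKM} as background; the paper gives no proof of it and explicitly remarks that the characterization is not used anywhere in the sequel. So there is no in-paper argument to compare yours against, and the only fair benchmark is the proof in \cite{HKM} itself. Your sketch does follow the general strategy of that source --- realize $\mathbf{a}$ in the shift orbit closure, interpret the interval-matching hypothesis as a closing condition on dynamical cubes, and invoke the structure theorem identifying $X/\mathbf{RP}^{[d-1]}$ with the maximal $(d-1)$-step nilfactor --- and you correctly locate the deep input in that structure theorem. As a roadmap it is sound.

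As a proof it has two genuine gaps beyond the ones you flag. First, in the sufficiency direction the base point $\mathbf{a}$ need not lie in any minimal subsystem of $X=\overline{\{\sigma^k\mathbf{a}\}}$, so after you ``pass to a minimal subsystem'' the sequence $a_n=F(\sigma^n\mathbf{a})$ is no longer realized in the system you are analyzing; you must first use the hypothesis itself to prove that $\mathbf{a}$ is an almost periodic point (already for $d=2$ this is the substance of the Bochner compactness criterion, and for larger $d$ it is not a routine reduction). Second, passing to the maximal distal factor does not obviously help: there is no reason a priori that $F$ factors through the distal factor, so you cannot simply ``descend'' and then apply the minimal-distal structure theorem; in \cite{HKM} the distality of the relevant system is itself extracted from the hypothesis by a bootstrapping argument, not assumed. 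Relatedly, the closing condition gives only an $\ep$--$\d$ statement along the orbit of $\mathbf{a}$, and upgrading this to ``$F$ is exactly constant on $\mathbf{RP}^{[d-1]}$ classes of $X$'' requires a limiting/continuity argument that you assert rather than supply. None of these is fatal to the strategy, but each is a real step that your write-up currently elides.
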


\subsection{Almost nilsequences}

\subsubsection{Zero-density-sequence}

\begin{de}
Let $\{a_n \}_{ n\in \Z}$ be a bounded sequence of complex numbers. We say
that $\{a_n\}_{n\in \Z}$ is a {\em zero-density-sequence} if it tends to zero in  density, i.e.
$$\lim_{N\to \infty} \frac{1}{2N+1}\sum_{n=-N}^N |a_n|=0.$$
\end{de}

\begin{rem}\label{rem2.10}\
\medskip

\begin{enumerate}
  \item It is well known that a bounded sequence $\{a_n\}_{n\in \Z}$ of  complex numbers is zero-dense if and only if $$\displaystyle \lim_{N\to \infty}
  \frac{1}{2N+1} \sum_{n=-N}^N |a_n|^2 = 0$$ if and only if there is a subset $J$ of $\Z$ with density 1
  such that $\displaystyle \lim_{J\ni n\to \infty}a_n=0$.
  \item Let ${ b}=\{b_n\}_{n\in \Z}$ be a bounded sequence of complex numbers and ${ a}=\{a_n\}_{n\in \Z}$ be a zero-density-sequence.
  Then it is easy to see that ${ab}\doteq\{a_nb_n\}_{n\in \Z}$ is also a zero-density-sequence.
  \item By (2), if a sequence $\xi$ of complex numbers is linearly disjoint from M\"{o}bius function and $a$ is a
  zero-density-sequence, then $\xi+a$ is still linearly disjoint from M\"{o}bius function.
\end{enumerate}
\end{rem}

\subsubsection{Almost nilsequences}

\begin{de}
Let $\{a_n \}_{n\in \Z}$ be a bounded sequence of complex numbers and $d\in \N$. We say
that $\{a_n\}_{n\in \Z}$ is an {\em ($d$-step) almost nilsequence} if it is a sum of a ($d$-step) nilsequence and a zero-density-sequence.

\medskip

We denote the family of $d$-step almost nilsequences (resp. almost nilsequences) by $\mathcal{AN}_d$ (resp. $\mathcal{AN}$).
\end{de}


\begin{prop}\label{prop-algebra}
Let $d\in \N$. The families $\mathcal{AN}_d$ and $\mathcal{AN}$ are  subalgebras of $l^\infty(\Z)$ that are invariant under complex
conjugation, shift and uniform limits.
\end{prop}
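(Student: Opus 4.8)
The plan is to write $\mathcal{AN}_d=\mathcal{N}_d+\mathcal{Z}$ and $\mathcal{AN}=\mathcal{N}+\mathcal{Z}$, where $\mathcal{Z}$ denotes the family of zero-density-sequences, and to reduce everything except uniform-limit-closedness to soft algebra. First I would record that $\mathcal{Z}$ is a linear subspace of $l^\infty(\Z)$ that is invariant under complex conjugation (since $|\overline{a_n}|=|a_n|$) and under the shift $\sigma$ (a finite shift does not affect Cesàro densities), that $\mathcal{Z}$ is an ideal of $l^\infty(\Z)$ by Remark \ref{rem2.10}(2), and that $\mathcal{Z}$ is closed under uniform limits (if $\|a^{(k)}-a\|_\infty\to 0$ with $a^{(k)}\in\mathcal{Z}$, then $\limsup_N\frac{1}{2N+1}\sum_{|n|\le N}|a_n|\le\|a-a^{(k)}\|_\infty$ for every $k$, hence is $0$). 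Granting the corresponding properties of $\mathcal{N}_d$ and $\mathcal{N}$ already cited above, the facts that $\mathcal{AN}_d,\mathcal{AN}$ are shift- and conjugation-invariant linear subspaces are immediate, and closure under multiplication follows by expanding $(n_1+z_1)(n_2+z_2)=n_1n_2+(n_1z_2+z_1n_2+z_1z_2)$ and using that $\mathcal{N}_d$ is an algebra while the three bracketed terms lie in the ideal $\mathcal{Z}$.

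The only substantial point is closure under uniform limits, and the hard part will be the following estimate: there is a constant $c>0$ (one may take $c=1/2$) such that for every nilsequence $n\in\mathcal{N}$ one has $\inf_{z\in\mathcal{Z}}\|n-z\|_\infty\ge c\,\|n\|_\infty$. To prove it I would first treat a basic nilsequence $n_j=\phi(\tau^j x)$ and pass to the orbit closure $Y=\overline{\{\tau^j x:j\in\Z\}}$, which by the structure theory quoted in Section \ref{section-nilsequence} is a sub-nilmanifold $H/\Lambda$ on which the translation is minimal and uniquely ergodic with fully supported Haar measure $\mu_Y$. Writing $M=\|n\|_\infty=\max_{y\in Y}|\phi(y)|$, the set $U=\{y\in Y:|\phi(y)|>M/2\}$ is open and nonempty, so $\mu_Y(U)>0$; by unique ergodicity the visit frequency $\frac{1}{2N+1}|\{|j|\le N:\tau^j x\in U\}|$ tends to $\mu_Y(U)>0$. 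Consequently, for any $z\in\mathcal{Z}$ and $\ep=\|n-z\|_\infty$ we have $|z_j|\ge M/2-\ep$ on a set of positive density, forcing $\frac{1}{2N+1}\sum_{|j|\le N}|z_j|\ge(M/2-\ep)\mu_Y(U)$ in the limit; since $z\in\mathcal{Z}$ this limit is $0$, whence $\ep\ge M/2$. The estimate then extends to all of $\mathcal{N}_d$ and $\mathcal{N}$ because $n\mapsto\inf_{z}\|n-z\|_\infty$ and $n\mapsto\|n\|_\infty$ are both $1$-Lipschitz for $\|\cdot\|_\infty$ and basic nilsequences are uniformly dense.

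Finally I would deduce uniform-limit-closedness from this estimate. Suppose $a^{(k)}=n^{(k)}+z^{(k)}\to a$ uniformly with $n^{(k)}\in\mathcal{N}_d$ and $z^{(k)}\in\mathcal{Z}$. Applying the estimate to the nilsequence $n^{(k)}-n^{(l)}$ gives $\tfrac12\|n^{(k)}-n^{(l)}\|_\infty\le\inf_{z\in\mathcal{Z}}\|n^{(k)}-n^{(l)}-z\|_\infty\le\|a^{(k)}-a^{(l)}\|_\infty$, the last step by taking $z=z^{(l)}-z^{(k)}$. Since $(a^{(k)})$ is uniformly Cauchy, so is $(n^{(k)})$; as $\mathcal{N}_d$ is uniformly closed it converges to some $n\in\mathcal{N}_d$, and then $z^{(k)}=a^{(k)}-n^{(k)}\to a-n$ uniformly, so $a-n\in\mathcal{Z}$ because $\mathcal{Z}$ is uniformly closed. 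Hence $a=n+(a-n)\in\mathcal{AN}_d$, and the identical argument with $\mathcal{N}$ in place of $\mathcal{N}_d$ handles $\mathcal{AN}$. I expect the main obstacle to be justifying the equidistribution input cleanly from the structure theorem — namely that orbit closures of nilsystems are uniquely ergodic with fully supported invariant measure — since everything downstream of the key estimate is formal.
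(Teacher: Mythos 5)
Your proposal is correct, and its skeleton coincides with the paper's: the algebra, conjugation and shift statements are handled by the same soft argument (that the zero-density sequences $\mathcal{Z}$ form a uniformly closed, conjugation- and shift-invariant ideal of $l^\infty(\Z)$), and closedness under uniform limits is reduced to a norm-separation estimate between nilsequences and $\mathcal{Z}$, followed by exactly the Cauchy argument the paper uses. Where you genuinely diverge is in the proof of that key estimate. The paper proves the sharper inequality $\|a+\xi\|_\infty\ge\|a\|_\infty$ for $a\in\mathcal{N}$, $\xi\in\mathcal{Z}$ (constant $1$ rather than your $1/2$) by purely topological dynamics: nilsystems are distal, hence every point is minimal, hence whenever $|a_k|>c$ the set $\{n\in\Z:|a_n|>c\}$ is syndetic and so has positive lower density, contradicting the zero density of $\xi$. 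You instead invoke unique ergodicity of minimal orbit-closure nilsystems together with full support of the Haar measure to get a quantitative visit-frequency bound for the open set $U=\{y:|\phi(y)|>M/2\}$; this is a heavier, measure-theoretic input than the paper's syndeticity argument, though standard and citable (e.g.\ \cite{L2005}). One small imprecision there: for an open set, unique ergodicity gives only $\liminf$ of the visit frequencies $\ge\mu_Y(U)$ (the boundary of $U$ need not be null), not convergence — but the $\liminf$ bound is all your argument actually uses, so nothing breaks. In compensation, your explicit $1$-Lipschitz/density step extending the estimate from basic nilsequences to all of $\mathcal{N}_d$ and $\mathcal{N}$ is cleaner than the paper's treatment, which asserts its fact for arbitrary $a\in\mathcal{N}$ while the minimality argument as written really addresses basic nilsequences, leaving the approximation step implicit. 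Finally, the paper's constant-$1$ version buys a little more — it shows the decomposition into nilsequence plus zero-density part is unique and the projection onto the nilsequence part is norm-nonincreasing — whereas for the Cauchy argument any positive constant, such as your $1/2$, suffices.
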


\begin{proof}
Since the proofs for $\mathcal{AN}_d$ and $\mathcal{AN}$ are similar. We show the case of $\mathcal{AN}$.

Let $a+\xi=\{a_n+\xi_n\}_{n\in \Z}, b+\eta=\{b_n+\eta_n\}_{n\in \Z}$ be two almost nilsequences such that $a=\{a_n\}_{n\in \Z}$, $b=\{b_n\}_{n\in \Z}\in \mathcal{N}$ and $\xi=\{\xi_n\}_{n\in \Z},\eta=\{\eta_n\}_{n\in \Z}$ are zero-density-sequences. Since $a,b$ are bounded, $a\eta, b\xi$ are zero-density-sequences. Also note that $\xi+\eta$ is a zero-density-sequence. Thus $(a+\xi)(b+\eta)$ and $a+\xi+b+\eta$ are almost nilsequences. That is, $\mathcal{AN}$ is a algebra. It is easy to verify that $\mathcal{AN}$ is invariant under complex conjugation and shift. It left to show that $\mathcal{AN}$ is closed.

\medskip

Let $a^m+\xi^m\in \mathcal{AN}$ with $a^m\in \mathcal{N}$ and $\xi^m$ is zero-dense for all $m\in \N$.
Assume that the sequence $a^m+\xi^m$ converges to $b\in l^\infty(\Z)$ when $m$ goes to $+\infty$. We need show that $b\in \mathcal{AN}$.

To see this, we need the following fact: For all $a\in \mathcal{N}$ and all zero-density-sequence $\xi$, one has that $\|a+\xi \|_\infty\ge \|a\|_\infty$. (If not, there is some $k\in \Z$ such that $|a_k|>\|a+\xi \|_\infty$. Let $c=\|a+\xi \|_\infty$. A well known fact is that nilsystems are distal systems (see  \cite[Ch 4. Theorem 3]{AGH} and \cite[Theorem 2.14]{L2005}), and in particular every point is a minimal point (see, for example, \cite[Corollary on p. 160]{F}). That means every point $x$ of a nilsystem $(X,T)$ returns to its neighborhood $U$ relatively densely, which means $\{n\in \Z: T^nx\in U\}$ has a bounded gap. It follows that if $|a_k|>c$ for some $m$, then the set $\{n\in \Z: |a_n|>c\}$ has a bounded gap. In particular, $\{n\in \Z: |a_n|>c\}$ has a positive lower density. Then there is some $n$ such that $|a_n+\xi_n|>c$, which means $\|a+\xi \|_\infty> c$, a contradiction!)

By the fact above one has that $\|a^m-a^\ell\|_\infty\le \|a^m+\xi^m-(a^\ell+\xi^\ell)\|_\infty$ for all $m,\ell\in \N$, and so the sequence $\{a^m\}_{m\in \N}$ is a Cauchy sequence, and it converges to some $a\in \mathcal{N}$. Then $\{\xi^m\}_{m\in \N}$ also converges, to some $\xi$. It is obvious that $\xi$ is zero-dense. Hence $b=a+\xi\in \mathcal{AN}$. The proof is completed.
\end{proof}

\subsection{Furstenberg's Example: $\{e(p(n))\}_{n\in \Z}$
is a nilsequence}\

\medskip

Now we give the classical example by Furstenberg \cite{F} in this subsection to show $\{e(p(n))\}_{n\in \Z}$
is a nilsequence, where $e(t)=\exp (2\pi i t)$ and $p(n)$ a real coefficient polynomial.

\medskip

Let $p(n)=a_kn^k+\ldots +a_1n+a_0$. Viewing $\T^k$ as $\{\a\}\times \T^k$, we define
$T: \T^k\rightarrow \T^k$ as follows:
\begin{equation*}{\footnotesize
 T{\bf x}= \left(
     \begin{array}{cccccc}
       1 &  &  &  &  &  \\
       1 & 1 &  &  &  &  \\
        & 1 & 1 &  &  &  \\
        &  &  & \ddots & &  \\
        &  &  &  & 1 & 1 \\
     \end{array}
   \right)
   \left(
     \begin{array}{c}
       \a \\
       x_1\\
       x_2 \\
       \vdots \\
       x_k \\
     \end{array}
   \right)
   =\left(
      \begin{array}{c}
        \a \\
        x_1+\a \\
        x_2+x_1 \\
        \vdots \\
        x_k+x_{k-1} \\
      \end{array}
    \right)}
\end{equation*}
Then
\begin{equation*}{\footnotesize
 T^n{\bf x}= \left(
     \begin{array}{cccccc}
       1 &  &  &  &  &  \\
       1 & 1 &  &  &  &  \\
        & 1 & 1 &  &  &  \\
        &  &  & \ddots & &  \\
        &  &  &  & 1 & 1 \\
     \end{array}
   \right)^n
   \left(
     \begin{array}{c}
       \a \\
       x_1\\
       x_2 \\
       \vdots \\
       x_k \\
     \end{array}
   \right)
   =\left(
      \begin{array}{c}
        \a \\
        n\a+x_1\\
        \tbinom{n}{2}\a+ nx_1+x_2 \\
        \vdots \\
        \tbinom{n}{k}\a+\tbinom{n}{k-1}x_1+\ldots +nx_{k-1}+x_k \\
      \end{array}
    \right)}
\end{equation*}

Now define $\a=k! a_k$ and choose points $x_1,\ldots, x_k$ so that
\begin{equation*}
  p(n)=\tbinom{n}{k}\a+\tbinom{n}{k-1}x_1+\ldots +nx_{k-1}+x_k.
\end{equation*}
Once one shows that $\T^k$ can be viewed as a nilrotation, then one has that $\{e(p(n))\}_{n\in \Z}$ is a nilsequence, where $e(t)=\exp (2\pi i t)$.

\medskip

Now we show that $(\T^k, T)$ is isomorphic to a nilrotation.
Let $G$ be the group of $(k+2)\times (k+2)$ lower-triangular matrices with unit diagonal

$${\footnotesize
\left(
  \begin{array}{cccccc}
    1 & 0 & 0 & \ldots & 0 & 0 \\
    a_{2,1} & 1 & 0 & \ldots & 0 & 0 \\
    a_{3,1} & a_{3,2} & 1 & \ldots & 0 & 0 \\
    \vdots & \vdots & \ddots & \ldots & \vdots & \vdots \\
    a_{k+1,1} & a_{k+1,2} & a_{k+1,3} & \ldots & 1 & 0\\
    a_{k+2,1} & a_{k+2,2} & a_{k+2,3} & \ldots & a_{k+2,k+1} & 1 \\
  \end{array}
\right)}
$$

\medskip

\noindent with $a_{i,j}\in \Z$ for $2\le j< i\le k+2$ and $a_{i, 1}\in \R$ for $i\ge 2$. Let $\Gamma$ be
the subgroup of $G$ consisting of the matrices with integer entries. Then $G$ is a nilpotent Lie group and $G/\Gamma\simeq\T^{k+1}$ via

$${\footnotesize
\left(
  \begin{array}{cccccc}
    1 & 0 & 0 & \ldots & 0 & 0 \\
    a_{2,1} & 1 & 0 & \ldots & 0 & 0 \\
    a_{3,1} & a_{3,2} & 1 & \ldots & 0 & 0 \\
    \vdots & \vdots & \ddots & \ldots & \vdots & \vdots \\
    a_{k+1,1} & a_{k+1,2} & a_{k+1,3} & \ldots & 1 & 0 \\
    a_{k+2,1} & a_{k+2,2} & a_{k+2,3} & \ldots & a_{k+2,k+1} & 1 \\
  \end{array}
\right)+\Gamma \mapsto \left(
                 \begin{array}{c}
                   a_{2,1} \\
                   a_{3,1} \\
                   \vdots \\
                   a_{k+1,1} \\
                   a_{k+2,1} \\
                 \end{array}
               \right)+\Z^{k+1}
}
$$

\medskip

Let $T_g: G/\Gamma \rightarrow G/\Gamma, x\mapsto g x$, where

$${\footnotesize
g=\left(
     \begin{array}{cccccc}
       1 &  &  &  &  &  \\
       1 & 1 &  &  &  &  \\
        & 1 & 1 &  &  &  \\
        &  &  & \ddots & &  \\
        &  &  &  & 1 & 1 \\
     \end{array}
   \right)\in G.}
$$

Let
\begin{equation*}{\footnotesize
  {\bf x}=\left(
  \begin{array}{cccccc}
    1 & 0 & 0 & \ldots & 0 & 0 \\
    \a & 1 & 0 & \ldots & 0 & 0 \\
    x_1 & a_{3,2} & 1 & \ldots & 0 & 0 \\
    \vdots & \vdots & \ddots & \ldots & \vdots & \vdots \\
    x_{k-1} & a_{k+1,2} & a_{k+1,3} & \ldots & 1 & 0 \\
    x_k & a_{k+2,2} & a_{k+2,3} & \ldots & a_{k+2,k+1} & 1 \\
  \end{array}
\right)+\Gamma \sim \left(
      \begin{array}{c}
        \a \\
        x_1 \\
        x_2 \\
        \vdots \\
        x_k \\
      \end{array}
    \right)+\Z^{k+1}
.}
\end{equation*}
Then
\begin{equation*}{\footnotesize
  \begin{split}
T_g {\bf x} & =\left(
  \begin{array}{cccccc}
    1 & 0 & 0 & \ldots & 0 & 0 \\
    \a+1 & 1 & 0 & \ldots & 0 & 0 \\
    x_1+\a & a_{3,2}+1 & 1 & \ldots & 0 & 0 \\
    \vdots & \vdots & \ddots & \ldots & \vdots & \vdots \\
    x_{k-1}+x_{k-2} & a_{k+1,2}+a_{k,2} & a_{k+1,3}+a_{k,3} & \ldots & 1 & 0 \\
    x_k +x_{k-1} & a_{k+2,2}+a_{k+1,2} & a_{k+2,3}+a_{k+1,3} & \ldots & a_{k+2,k+1}+1 & 1 \\
  \end{array}
\right)+\Gamma \\ & \sim \left(
      \begin{array}{c}
        \a \\
        x_1+\a \\
        x_2+x_1 \\
        \vdots \\
        x_k+x_{k-1} \\
      \end{array}
    \right)+\Z^{k+1}\in \{\a\}\times \T^k\subseteq \T^{k+1}.
\end{split}}
\end{equation*}

\medskip

Let $X=\overline{\{T^n_g {\rm x}:n\in \Z\}}$. Then $(X,T_g)$ is isomorphic to $(\T^k, T)$.

\section{Toral automorphisms with zero entropy}\label{section-tori}

To understand the noncommutative toral automorphisms better, in this section we study the toral
automorphisms with zero entropy and try to explain some main ideas here.

For the toral automorphisms with zero entropy, it is showed that the sequence realized in this
kind of systems is a nilsequence. We will give two approachs. One is in a pure dynamical way,
and the other is from $C^*$-algebra viewpoint. The main result of the section is contained in the
study of noncommutative automorphisms. But here it is easier to understand.

\subsection{Dynamical version}\

\medskip

Let $\T^d=\R^d/\Z^d$ be a $d$-dimensional torus.
Let $T: \T^d\rightarrow \T^d$ be $Tx=Ax$, where $A$ is an automorphism of $\T^d$. The automorphism $A$
can lift to a linear automorphism $\tilde{A}$ of $\R^d$ which preserve $\Z^d$, so that
$\tilde{A}\in GL(d,\Z)$. For convenience, we still denote $\tilde{A}$ by $A$.

\medskip

Let $f\in C(\T^d)$ and $x\in \T^d$. We want to study the sequence $\{f(T^nx)\}_{n\in \Z}$. Since the
linear combinations of characters $\phi\in \widehat{\T^d}$, the dual group of $\T^d$, are dense in $C(\T^d)$,
it suffices to study the sequence $\{\phi(T^n x)\}_{n\in \Z}$.

\medskip

The entropy of a toral automorphism is given by the following theorem:

\begin{thm}\cite{Sinai}\label{entropy of torus}
Let $\{\lambda_1,\ldots,\lambda_d\}$ be the eigenvalues of $A$. Then the entropy of $T$ is:
$$h_{\rm top}(\T^d,T)=h_{\rm top}(\T^d,A)=\sum_{j=1}^d \ln^+|\lambda_j|,$$
where $\ln^+(x) =\max\{\ln x,0\}$.
\end{thm}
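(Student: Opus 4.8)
The plan is to prove the single nontrivial equality $h_{\rm top}(\T^d,T)=\sum_{j=1}^d\ln^+|\lambda_j|$ (the first equality in the statement being merely the identification of $T$ with the map induced by $A$) by bounding the topological entropy from above and below, working throughout with Bowen's definition via the dynamical metrics $d_n(x,y)=\max_{0\le k<n}d(T^kx,T^ky)$ and the associated $(n,\ep)$-spanning and $(n,\ep)$-separated sets. The variational principle $h_{\rm top}(T)=\sup_\mu h_\mu(T)$ is kept in reserve for the lower bound.

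First I would fix the linear structure of $A$ acting on $\R^d$. Passing to $\C^d$ and regrouping complex-conjugate eigenvalues, I decompose $\R^d=E^u\oplus E^c\oplus E^s$ into the $A$-invariant subspaces on which the eigenvalues satisfy $|\lambda|>1$, $|\lambda|=1$ and $|\lambda|<1$ respectively. Since the projection $\R^d\to\T^d$ is a local isometry and $A$ descends to $T$, controlling $d_n$ on $\T^d$ reduces to estimating $\max_{0\le k<n}\|A^k v\|$ for lifts $v$ of difference vectors. The central observation is that on $E^u$ the quantity $\|A^n v\|$ grows like $\exp\!\big(n\sum_{|\lambda_j|>1}\ln|\lambda_j|\big)$ up to polynomial corrections from Jordan blocks, on $E^s$ it decays, and on $E^c$ it grows only polynomially in $n$.

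Next I would establish the upper bound $h_{\rm top}(T)\le\sum_j\ln^+|\lambda_j|$. Covering $\T^d$ by $d_n$-balls of radius $\ep$ amounts, up to a subexponential factor, to covering by translates whose number matches the volume growth of $A^n$ restricted to the expanding directions, namely $\prod_{|\lambda_j|>1}|\lambda_j|^n$; the $E^s$ and $E^c$ contributions are bounded by a polynomial in $n$. Taking $\tfrac1n\log$ and letting $n\to\infty$ and then $\ep\to0$ yields the bound. For the matching lower bound I would either exhibit an $(n,\ep)$-separated set of cardinality $\gtrsim\prod_{|\lambda_j|>1}|\lambda_j|^n$, by placing points in a fundamental domain of $E^u$ spaced so that any two distinct ones satisfy $d(T^kx,T^ky)\ge\ep$ for some $0\le k<n$; or, more cheaply, invoke the variational principle and compute $h_m(T)=\sum_j\ln^+|\lambda_j|$ for Haar measure $m$ directly via Pesin's formula, the positive Lyapunov exponents being exactly $\ln|\lambda_j|$ for $|\lambda_j|>1$. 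Either route gives $h_{\rm top}(T)\ge\sum_j\ln^+|\lambda_j|$.

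The main obstacle is the bookkeeping of the neutral block $E^c$: eigenvalues on the unit circle together with nontrivial Jordan (unipotent) parts produce polynomial, hence subexponential, growth of $\|A^n v\|$, and one must verify carefully that this inflates neither the spanning-set count nor the separated-set gaps by more than a subexponential factor. Treating complex eigenvalues over $\R$ and the non-diagonalizable case in a uniform way is the technical heart of the argument; once these growth estimates are in place, everything else is routine volume-growth counting.
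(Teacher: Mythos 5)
The paper does not prove this theorem at all: it is quoted as a classical result of Sinai (the Sinai--Yuzvinskii entropy formula for toral automorphisms), so there is no in-paper argument to compare yours against. Your outline is the standard textbook proof and is essentially sound: the splitting $\R^d=E^u\oplus E^c\oplus E^s$, the upper bound by counting $(n,\ep)$-spanning sets against the volume growth $\prod_{|\lambda_j|>1}|\lambda_j|^n$ of the expanding directions (with the unipotent/central block contributing only polynomial factors), and the lower bound either by a separated set in $E^u$ or by the variational principle applied to Haar measure. Two points deserve care if you were to write this out. First, in the separated-set construction you must arrange that at the first time $k$ when $\|A^k v\|$ exceeds $\ep$ it is still below the injectivity radius of $\T^d$, so that the torus distance really equals the norm of the lift; this is the standard but easy-to-overlook step. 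Second, invoking Pesin's formula for the lower bound is heavier machinery than needed and is circular in spirit unless you already know Haar measure satisfies its hypotheses; the classical and more elementary route is to compute $h_m(T)=\sum_j\ln^+|\lambda_j|$ for Haar measure $m$ directly (via Fourier analysis or the volume growth of the elements of $\bigvee_{k=0}^{n-1}T^{-k}\mathcal{P}$ for a nice partition $\mathcal{P}$). With those repairs the argument is complete and correct.
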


Now assume the entropy of $T$ is zero. By Theorem \ref{entropy of torus}, $\Pi_{j=1}^d \lambda_j$ is a
nonzero integer with  $|\lambda_j|\leq 1$ for all $1\le j\le d$, which means $|\lambda_j|=1$. According to the following Kronecker's Lemma, we have all the $\lambda_j$ are unity roots.

\begin{thm}[Kronecker's Lemma]\cite{Kronecker}\label{roots}
Let
$p(x)=\prod_{j=1}^d (x-\lambda_j)\in \Z[x]$
be a monic polynomial with integer coefficients. If $|\lambda_j |\le 1$ for all $j$ with $1\le j\le d$, then
there is some $k\ge 1 $ for which $\lambda^k_j=1$ for all $1\le j\le d$.
\end{thm}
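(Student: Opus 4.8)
The plan is to exploit that each $\lambda_j$ is an algebraic integer whose every conjugate lies in the closed unit disk, and then to run the classical compactness-by-pigeonhole argument on the powers of the $\lambda_j$. First I would observe that, since $p$ is monic with integer coefficients, the roots $\lambda_1,\ldots,\lambda_d$ are algebraic integers and the multiset $\{\lambda_1,\ldots,\lambda_d\}$ is the full root set of $p$. (In the application the product $\prod_j\lambda_j$ is a nonzero integer, so no $\lambda_j$ vanishes; I would record this, since a zero root would plainly violate the conclusion $\lambda_j^k=1$.)

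Next, for each integer $m\ge 1$ I would form the polynomial
$$p_m(x)=\prod_{j=1}^d (x-\lambda_j^m).$$
Its coefficients are, up to sign, the elementary symmetric functions of $\lambda_1^m,\ldots,\lambda_d^m$. These are symmetric polynomials with integer coefficients in $\lambda_1,\ldots,\lambda_d$, hence, by the fundamental theorem of symmetric polynomials, integer polynomials in the elementary symmetric functions of the $\lambda_j$ themselves, i.e. in the (integer) coefficients of $p$. Therefore $p_m\in\Z[x]$ for every $m\ge 1$.

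The crux is now a finiteness observation. Since $|\lambda_j|\le 1$ for all $j$, we have $|\lambda_j^m|\le 1$, so the $i$-th elementary symmetric function of $\lambda_1^m,\ldots,\lambda_d^m$ is bounded in modulus by $\binom{d}{i}$. Thus every coefficient of every $p_m$ is an integer lying in a fixed bounded range, and so there are only finitely many distinct polynomials among $\{p_m:m\ge 1\}$. Consequently the set of all powers $\{\lambda_j^m:1\le j\le d,\ m\ge 1\}$, being contained in the union of the finite root sets of this finite family, is itself finite. For each fixed $j$ this forces $\lambda_j^{m}=\lambda_j^{m'}$ for some $m<m'$; since $\lambda_j\ne 0$, we obtain $\lambda_j^{m'-m}=1$, so $\lambda_j$ is a root of unity.

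Finally I would take $k$ to be a common multiple (for instance the least common multiple, or simply the product) of the individual orders $k_j$ satisfying $\lambda_j^{k_j}=1$; then $\lambda_j^k=1$ holds simultaneously for all $j$, which is exactly the claim. I expect the main obstacle to be the integrality-and-boundedness step: verifying cleanly that each $p_m$ lies in $\Z[x]$ (via symmetric functions, or equivalently Newton's identities) and that its coefficients are \emph{uniformly} bounded, since this is precisely what converts the analytic hypothesis $|\lambda_j|\le 1$ into the finiteness that powers the pigeonhole argument.
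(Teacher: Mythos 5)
The paper offers no proof of this statement: it is quoted as a classical result with a citation to Kronecker's 1857 paper, so there is nothing internal to compare against. Your argument is the standard (and correct) proof of Kronecker's theorem: the polynomials $p_m(x)=\prod_j(x-\lambda_j^m)$ lie in $\Z[x]$ by the fundamental theorem of symmetric polynomials, their coefficients are uniformly bounded by $\binom{d}{i}$ because $|\lambda_j^m|\le 1$, so only finitely many distinct $p_m$ occur, the set of all powers $\lambda_j^m$ is finite, and pigeonhole forces each nonzero $\lambda_j$ to be a root of unity; taking $k$ to be the least common multiple of the orders finishes the job. Every step is sound. You were also right to flag the one genuine defect, which lies in the statement rather than in your proof: as written the theorem is false if some $\lambda_j=0$ (e.g.\ $p(x)=x$), and one must add the hypothesis $p(0)\neq 0$ (equivalently $\lambda_j\neq 0$ for all $j$). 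In the paper's application this is automatic, since there $p$ is the characteristic polynomial of a matrix in $GL(d,\Z)$, so $\prod_j\lambda_j=\pm\det$ is a nonzero integer, exactly as you observed.
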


Therefore $A\in GL(d,\Z)$ is a matrix whose eigenvalues are roots of unity. Thus there are some matrices $B$
and $P$ such that $A=P^{-1}BP$, where $B$ is an upper triangular matrix, and all the entries in the diagonal of $B$ are roots of unity.

Let $x\in \T^d$. We have
\begin{equation*}
T^n x  =A^n x = P^{-1}B^nP x.
\end{equation*}
Since $B$ is an upper triangular matrix and all the diagonal elements of $B$ are roots of unity, there is some
$m\in \N$ such that $B^{m}$ is an upper triangular matrix with all the diagonal elements being $1$. Let $C=B^{m}$,
and write $n$ as $n=tm+r$, $0\leq r\leq m-1$. Then we have
\begin{equation*}
  T^n x=P^{-1}C^tB^rP x .
\end{equation*}
Let $C=I+D$, where $D$ is strictly upper triangular matrix and hence $D^d=0$. Then for $t\ge d$, one has
\begin{equation*}
\begin{split}
C^t=(I+D)^t=\sum_{j=0}^t \tbinom{t}{j}D^j=\sum_{j=0}^{d-1} \tbinom{t}{j}D^j=\sum_{j=0}^{d-1}\frac{t!}{j!(t-j)!}D^j,
\end{split}
\end{equation*}
which means each element of $C^t$ is a polynomial in $t$. Notice that, for a fixed $r$, each element of
$A^{tm+r}=P^{-1}C^tB^rP$ is a real coefficient polynomial in $t$. That means that each coordinate of $T^{tm+r}x$ is a real coefficient polynomial in $t$, i.e.
$$T^nx=T^{tm+r}x=(f_{r1}(t),f_{r2}(t),\ldots, f_{rd}(t))',$$
where $f_{rj}(t)$ is real coefficient polynomial in $t$.

The character $\phi\in \widehat{\T^d}$ has the form $\phi: \T^d\rightarrow S^1, x\mapsto e(\langle v, x\rangle)$
for some $v\in \Z^d$, where $\langle v, x\rangle=\sum_{j=1}^dx_jv_j $ is the inner product. Hence
\begin{equation*}
  \phi(T^nx)=\phi(T^{tm + r}x)=e(\sum_{j=1}^d v_jf_{rj}(t)).
\end{equation*}
For each $r$ with $0\le r\le m-1$, $\{e(\sum_{j=1}^d v_jf_{rj}(t))\}_{t\in \Z}$ is a nilsequence since
$\sum_{j=1}^d v_jf_{rj}(t)$ is an integral polynomial in $t$.
Then it follows from the following lemma that $\{\phi(T^nx)\}_{n\in \Z}$ is a nilsequence.

\begin{lem}\label{nilsequenceTogether}
Let $\xi\in l^\infty(\Z)$ and $m\in \N$. For each $r\in \{0,1,\ldots,m-1\}$, let $\eta_r(t)=\xi(tm+r), \forall t\in \Z$.
\begin{enumerate}
   \item If the sequence $\eta_r\in l^\infty(\Z)$ is a (basic) nilsequence for each $r\in \{0,1,\ldots,m-1\}$,
   then $\xi$ is also a (basic) nilsequence.

   \item If the sequence $\eta_r\in l^\infty(\Z)$ is an almost nilsequence for each $r\in \{0,1,\ldots,m-1\}$,
   then $\xi$ is also an almost nilsequence.
\end{enumerate}
\end{lem}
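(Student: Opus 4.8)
The plan is to prove part (1) first and then obtain part (2) by applying the same interleaving construction separately to the nilpotent and the zero-density pieces. For (1), since a nilsequence is by definition a uniform limit of basic nilsequences, and interleaving is compatible with uniform convergence, it suffices to treat the case where every $\eta_r$ is a \emph{basic} nilsequence: if $\eta_r^{(k)}\to\eta_r$ uniformly for each $r$ and $\xi^{(k)}$ denotes the sequence obtained by interleaving the $\eta_r^{(k)}$, then $\|\xi-\xi^{(k)}\|_\infty=\max_{0\le r<m}\|\eta_r-\eta_r^{(k)}\|_\infty\to 0$, so $\xi$ is a uniform limit of the basic nilsequences $\xi^{(k)}$. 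Hence assume $\eta_r(t)=\phi_r(\tau_r^t x_r)$ for a nilmanifold $X_r=G_r/\Gamma_r$, a point $x_r$, an element $\tau_r\in G_r$, and $\phi_r\in C(X_r)$; as recalled in Section \ref{section-nilsequence} (see also \cite{GT2010,L2005}), I may take each $G_r$ connected and simply connected.

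The core step is to build a single nilsystem realizing $\xi$. I would form the product nilmanifold $X=\prod_{r=0}^{m-1}X_r=G/\Gamma$ with $G=\prod_r G_r$, $\Gamma=\prod_r\Gamma_r$, set $\tau=(\tau_0,\ldots,\tau_{m-1})$ and $\mathbf{x}=(x_0,\ldots,x_{m-1})$, and then adjoin a ``clock'' circle by taking $\widetilde{G}=G\times\R$, $\widetilde{\Gamma}=\Gamma\times\Z$, so that $\widetilde{X}=X\times\T$ is again a nilmanifold. Because $G$ is connected and simply connected, $\tau$ has a well-defined $m$-th root $\tau^{1/m}=\exp(\tfrac1m\log\tau)$, and I set $\widetilde{\tau}=(\tau^{1/m},\,1/m)\in\widetilde{G}$. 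Then $\widetilde{\tau}^{\,n}\cdot(\mathbf{x},0)=(\tau^{n/m}\mathbf{x},\ n/m \bmod 1)$; along this orbit the circle coordinate visits only the $m$ points $r/m$, thereby recording the residue $r=n\bmod m$, while the $X_r$-coordinate of $\tau^{n/m}\mathbf{x}$ equals $\tau_r^{\lfloor n/m\rfloor}(\tau_r^{r/m}x_r)$ since $n/m=\lfloor n/m\rfloor+r/m$. Choosing continuous bumps $\chi_r$ on $\T$ with $\chi_r(r'/m)=\delta_{rr'}$, I define $\widetilde{\Phi}(\mathbf{y},s)=\sum_{r=0}^{m-1}\chi_r(s)\,\phi_r(\tau_r^{-s}y_r)$, which is continuous on $\widetilde{X}$ and satisfies $\widetilde{\Phi}(\widetilde{\tau}^{\,n}(\mathbf{x},0))=\phi_r(\tau_r^{-r/m}\tau_r^{n/m}x_r)=\phi_r(\tau_r^{\lfloor n/m\rfloor}x_r)=\eta_r(\lfloor n/m\rfloor)=\xi(n)$. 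As $(\widetilde{X},\widetilde{\tau})$ is a nilsystem (a product of $(X,\tau^{1/m})$ with a circle rotation), this exhibits $\xi$ as a basic nilsequence.

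For part (2) I would write each $\eta_r=\nu_r+\zeta_r$ with $\nu_r$ a nilsequence and $\zeta_r$ a zero-density-sequence, and interleave the two families separately: define $\xi^{\mathrm{nil}}(tm+r)=\nu_r(t)$ and $\xi^{\mathrm{zd}}(tm+r)=\zeta_r(t)$, so that $\xi=\xi^{\mathrm{nil}}+\xi^{\mathrm{zd}}$. By part (1), $\xi^{\mathrm{nil}}$ is a nilsequence. For $\xi^{\mathrm{zd}}$, by Remark \ref{rem2.10}(1) each $\zeta_r$ tends to $0$ along a density-one set $J_r\subseteq\Z$; the set $\{tm+r:\ t\in J_r,\ 0\le r<m\}$ then has density one in $\Z$ (its complement is a finite union of sets of density $0$) and $\xi^{\mathrm{zd}}(n)\to 0$ along it, so $\xi^{\mathrm{zd}}$ is a zero-density-sequence and $\xi$ is an almost nilsequence.

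The hard part is the basic case of (1): manufacturing one nilsystem and one continuous function whose orbit reproduces $\xi$, even though the defining data $(\phi_r,\tau_r,x_r)$ genuinely depend on the residue $r$. The clock circle handles the bookkeeping of $r$, but two points need care. First, the construction uses the fractional power $\tau^{1/m}$, which is why I must pass to connected simply connected $G$ (legitimate by the cited convention). Second, $\widetilde{\Phi}$ must be a single \emph{continuous} function that effectively equals the different $\phi_r$ in different phases; this is made rigorous by the partition of unity, exploiting the fact that along the orbit the clock coordinate only ever lands on the finite set $\{r/m:0\le r<m\}$, so the interpolation away from these points is irrelevant to the values $\xi(n)$.
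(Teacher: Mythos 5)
Your argument is correct and is essentially the paper's own: after passing to connected, simply connected groups and extracting $m$-th roots, both proofs realize $\xi$ on a product nilsystem consisting of the $X_r$'s together with a ``clock'' (you use the circle rotation by $1/m$, the paper uses the cyclic rotation on $\Z_m$) and a selector function, and both treat uniform limits and the zero-density part in the same way. The one point to tidy is that $\phi_r(\tau_r^{-s}y_r)$ is well-defined only for $s\in\R$, not for $s\in\T$; since the clock coordinate along the orbit only ever equals $r/m$, you should replace $\tau_r^{-s}$ by the constant $\tau_r^{-r/m}$ on the support of $\chi_r$, which is exactly the form of the paper's selector function.
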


\begin{proof}

First we assume that $\eta_r(t)$ is a basic $d_r$-step nilsequence for each $0\leq r\leq m-1$. Then for each $0\leq r\leq m-1$ there is some nilmanifold $X_r=G_r/\Gamma_r$, $\tau_r \in G_r$ and a continuous map $f_r$ such that $\eta_r(t)=f_r(\tau_r^t x_r)$ for some $x_r\in G_r/\Gamma_r$.
If, for some $r$, $G_r$ is not connected, after replacing $X_r$ by a larger nilmanifold and extending $f_r$ to a continuous function on this nilmafold one may assume that every $G_r$ is connected and simply-connected \cite{L2005}. In this case for each $r\in \{0,\ldots,m-1\}$, $\tau_r$ has a $m$-th root $\a_r$ (let $\alpha_r=\exp(\frac{1}{m}\log \tau_r)$ then $\a_r^m=\tau_r$). Then $\{b_{n,r}\}_n:=\{f_r(\a_r^{n-r}x_r))\}_n$ is still a basic $d_r$-step nilsequence, which satisfies $b_{tm+r,r}=\eta_r(t)$.

\medskip

Let $\Z_m=\{\overline{0},\ldots,\overline{m-1}\}$ and define $S:\Z_m\rightarrow \Z_m$ by $S(\overline{j})=\overline{j+1}$. And for each $r\in \{0,1,\ldots, m-1\}$ let $(G_r/\Gamma_r, T_r)$ be defined as $T_r(x)=\a_r x$. Now we consider the product system
$$(X,T)=(\prod_{r=0}^{m-1} G_r/\Gamma_r \times \Z_m, \prod_{r=0}^{m-1} T_r\times S).$$
Obviously $(X,T)$ is a nilsystem.

Define $g:\Z_m\rightarrow \C$ such that $g(\overline{0})=1$, $g(\overline{j})=0$ for $\overline{j}\neq \overline{0}$. And define a continuous function $f:X\rightarrow \C$ as follows
{$$f(y_0,y_1,\ldots,y_{m-1},\overline{j})=\sum_{r=0}^{m-1}g(\overline{j-r})f_r(\alpha_r^{-r}y_r).$$}
Thus $\xi(n)=f(T^n\vec{x})$, where $\vec{x}=(x_0,x_1,\ldots,x_{m-1},\overline{0})$, i.e. $\xi(n)$ is a basic nilsequence. It is easy to see that $\eta_r(t)=\xi(tm+r), \forall t\in \Z$ for each $r\in \{0,1,\ldots,m-1\}$.

\medskip

Now let $\eta_r=\{\eta_r(t)\}_{t\in \Z}$ be nilsequences for each $r\in \{0,1,\ldots,m-1\}$. Then there are some basic nilsequence $\{\eta_{r,k}\}_{k\in\Z}$ such that $\eta_{r,k}\rightarrow \eta_r, k\to\infty$ in $l^{\infty}(\Z)$. Let $\xi_k\in l^\infty(\Z)$ be defined as $\xi_k(tm+r)=\eta_{r,l}(t),\forall t\in \Z$ for each $r\in \{0,1,\ldots,m-1\}$. Then as showed above, $\xi_k$ is a basic nilsequence for each $k\in \N$. Let $\xi$ be the limit of $\{\xi_k\}_{k\in \N}$ in $l^\infty(\Z)$. Then $\xi$ is a nilsequence and $\eta_r(t)=\xi(tm+r), \forall t\in \Z$ for each $r\in \{0,1,\ldots,m-1\}$.

\medskip

The proof for the case of almost nilsequences is similar. Here we need notice the fact that if $\{\zeta(tm+r)\}_{t\in \Z}$ has zero-density for each $0\leq r\leq m-1$, then $\{\zeta(n)\}_{n\in \Z}$ itself has zero density.
\end{proof}

\subsection{$C^*$-algebra version}\

\medskip


Let $(\T^d, T)$ be a toral automorphism with zero entropy as above. Then it induces an automorphism:
$\hat{T}: C(\T^d)\rightarrow C(\T^d), u\mapsto u\circ T=u\circ A$. Let $\rho$ be a state on $C(\T^d)$
and $u\in C(\T^d)$. We want to study the sequence $\{\rho(\hat{T}^nu)\}_{n\in \Z}$. Note that, unlike
last subsection, we can not prove that the sequence $\{\rho(\hat{T}^nu)\}_{n\in \Z}$ is a nilsequence,
but we can show that it is an almost nilsequence.

\medskip

Let $$u_j(x)=e^{2\pi i x_j},$$
where $x=(x_1, \ldots, x_d)'\in \mathbb{T}^d$.
By Fourier expansion, we have that $u=\sum_{\vec{r}\in \Z^d}a_{\vec{r}}u_1^{r_1}\ldots u_d^{r_d}$.
For each $\ep>0$, there is a finite subset $F\subset \Z^d$ such that
$$\|u-\sum_{\vec{r}\in F}a_{\vec{r}}u_1^{r_1}\ldots u_d^{r_d}\|_{\infty}<\ep.$$
Together with $\|\rho\|=1$ and the fact $\mathcal{AN}$ being an algebra, one has that to show
$\{\rho(\hat{T}^nu)\}_{n\in \Z}\in \mathcal{AN}$, it suffices to show the sequence
$\{\rho(\hat{T}^n u_1^{r_1}\ldots u_d^{r_d})\}_{n\in \Z}$ is in $\mathcal{AN}$.

\medskip

It is easy to see that $\hat{T}$ is determined by the matrix $A=(a_{jk})_{1\leq j,k\leq d}\in GL(d,\Z)$:
$$\hat{T}(u_j)=u_1^{a_{j1}}\ldots u_d^{a_{jd}}.$$
Since the entropy of $(\T^d, T)$
is zero if and only if the VB entropy of $\hat{T}$ is zero \cite{KL2006}. By the analysis in the last
subsection, there is some $m\in \N$ such that for each $0\le r\le m-1$, $A^{tm+r}=\{q_{jkr}(t)\}_{j,k}$,
where each $q_{jkr}(t)$ is an integral polynomial.
Hence we have for each $v=u_1^{r_1}\ldots u_d^{r_d}$ with $r_j\in \Z$ and a fixed $r\in \{0,1,\ldots,m-1\}$,
$$\hat{T}^nv=\hat{T}^{tm+r}(v)=u_1^{p_{1r}(t)}\ldots u_d^{p_{dr}(t)},$$
where $p_{1r},\ldots,p_{dr}$ are integral polynomials.

By GNS construction, there is a Hilbert space $\mathcal{H}$ and a representation $\pi$ of $C(X)$
and a vector $w\in \H$ such that
$$\rho(\hat{T}^n v)=\langle \pi(\hat{T}^n v)w, w \rangle .$$
Let $U_j=\pi(u_j)$ for each $j$. Then one has that
$$\rho(\hat{T}^n v)=\langle \pi(\hat{T}^n v)w, w \rangle=\langle \pi(\hat{T}^{tm+r} v)w,
w \rangle=\langle U_1^{p_{1r}(t)}\ldots U_d^{p_{dr}(t)}w, w \rangle.$$

Then by Lemma \ref{nilsequenceTogether} and the following theorem, one has that the sequence
$\{\rho(\hat{T}^n v)\}_{n\in \Z}$ is an almost nilsequence.

\begin{thm}\label{commutative-case}
Let $\mathcal{H}$ be a Hilbert space. Let $U_1,\ldots,U_d\in \B(\H)$ be commutative unitary
operators and $p_1(n),\ldots,p_d(n)$ be integral polynomials. Then for each $u,v\in \H$ the
sequence $a_n=\langle U_1^{p_1(n)}\ldots U_d^{p_d(n)}u,v\rangle$ is a sum of a nilsequence
and a zero-density-sequence, i.e. an almost nilsequence.
\end{thm}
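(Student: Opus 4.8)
The plan is to reduce the whole statement to the joint spectral measure of the commuting family and then to separate a ``rational'' (nilsequence) part from an ``irrational'' (zero-density) part by a Wiener--Weyl argument. Since $U_1,\dots,U_d$ are commuting unitaries, the map $\vec n=(n_1,\dots,n_d)\mapsto U_1^{n_1}\cdots U_d^{n_d}$ is a unitary representation of $\Z^d$, so by the spectral theorem there is a projection-valued measure $E$ on $\widehat{\Z^d}=\T^d$ with $U_1^{n_1}\cdots U_d^{n_d}=\int_{\T^d}e(\langle \vec n,\theta\rangle)\,dE(\theta)$. Setting $\mu=\langle E(\cdot)u,v\rangle$, a finite complex measure on $\T^d$, and $\vec p(n)=(p_1(n),\dots,p_d(n))$, one gets
\begin{equation*}
a_n=\int_{\T^d}e\Big(\sum_{j=1}^d p_j(n)\theta_j\Big)\,d\mu(\theta).
\end{equation*}

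First I would rewrite the phase in the binomial basis. As each $p_j$ is integral, $p_j(n)=\sum_{i=0}^k c_{ji}\binom ni$ with $c_{ji}\in\Z$, where $k=\max_j\deg p_j$, so $\sum_j p_j(n)\theta_j=\sum_{i=0}^k\binom ni\,\psi_i(\theta)$ with $\psi_i(\theta)=\sum_j c_{ji}\theta_j\in\T$ well defined modulo $1$. Since $\binom n0=1$, the term $i=0$ contributes only the $n$-independent phase $e(\psi_0(\theta))$, which I absorb into the measure; pushing forward under $\Psi=(\psi_1,\dots,\psi_k)\colon\T^d\to\T^k$ the complex measure $d\lambda(\theta)=e(\psi_0(\theta))\,d\mu(\theta)$, I obtain a finite complex measure $\nu=\Psi_*\lambda$ on $\T^k$ with $a_n=\int_{\T^k}e\big(\sum_{i=1}^k\binom ni\,\psi_i\big)\,d\nu(\psi)$. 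I then split $\nu=\nu_a+\nu_c$ into its atomic and continuous parts. For the atomic part $\nu_a=\sum_\ell w_\ell\delta_{\psi^{(\ell)}}$, each summand $e\big(\sum_{i=1}^k\binom ni\psi^{(\ell)}_i\big)$ equals $e(q_\ell(n))$ for a real polynomial $q_\ell$, hence is a nilsequence by Furstenberg's example; since $\sum_\ell|w_\ell|=|\nu_a|(\T^k)<\infty$ the series converges uniformly, so by the closedness of $\mathcal N$ under uniform limits the atomic contribution is a nilsequence.

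For the continuous part $b_n=\int_{\T^k}e\big(\sum_{i=1}^k\binom ni\psi_i\big)\,d\nu_c(\psi)$, I would show it is a zero-density-sequence via Remark \ref{rem2.10}(1), that is, by proving $\frac1{2N+1}\sum_{n=-N}^N|b_n|^2\to0$. Expanding the square and interchanging sum and integral,
\begin{equation*}
\frac1{2N+1}\sum_{n=-N}^N|b_n|^2=\int_{\T^k}\!\int_{\T^k}\Big(\frac1{2N+1}\sum_{n=-N}^N e\big(\textstyle\sum_{i=1}^k\binom ni(\psi_i-\psi'_i)\big)\Big)\,d\nu_c(\psi)\,d\overline{\nu_c}(\psi').
\end{equation*}
By Weyl's equidistribution theorem the inner average tends to $0$ whenever some $\psi_i-\psi'_i$ ($1\le i\le k$) is irrational, and it stays bounded by $1$; since the resonance set $B=\{(\psi,\psi'):\psi_i-\psi'_i\in\Q/\Z\ \text{for all }i\}=\bigcup_{\eta\in(\Q/\Z)^k}\{\psi'=\psi-\eta\}$ is a countable union of graphs and $\nu_c$ has no atoms, $B$ is $|\nu_c|\times|\nu_c|$-null. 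Dominated convergence then forces the limit to be $0$, as desired.

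The step I expect to be the main obstacle, and the reason the constant frequency must be discarded, is precisely this continuous part. One cannot simply split the original spectral measure $\mu$ on $\T^d$ into atomic and continuous parts: a continuous $\mu$ may push forward to a measure with atoms, for instance when $p_1=\dots=p_d$ the whole phase depends only on $\sum_j\theta_j$, and such atoms produce genuine nilsequence rather than zero-density behaviour. Passing first to the effective frequencies $\psi_i=\sum_j c_{ji}\theta_j$ and, crucially, dropping the constant frequency $\psi_0$ is what makes the resonance set $B$ countable; had $\psi_0$ been retained, $B$ would contain the positive-measure slab on which $\psi_0-\psi'_0$ is arbitrary and the argument would break. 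Verifying the measurability and the dominated-convergence step for complex measures is then routine.
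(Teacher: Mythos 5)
Your argument is correct, but it takes a genuinely different route from the paper's. The paper first normalizes $p_1(0)=\cdots=p_d(0)=0$, polarizes to reduce to $\langle g(n)u,u\rangle$, and invokes Bochner's theorem to get a probability measure on $\T^d$; it then isolates the resonant part of that measure by introducing the equivalence relation $\xi\sim\gamma$ iff $f_{\xi-\gamma}(n)=\sum_j(\xi_j-\gamma_j)p_j(n)$ has all rational coefficients, and extracts, via a Fubini computation of $\mu\times\mu(W_2)$, the at most countably many equivalence classes of positive measure; the restriction of $\mu$ to their union produces the nilsequence $\sum_{m,k}\mu(\cdot)\,e(g_k(n)+f_{\xi_m}(n))$ and the complementary restriction produces the zero-density part. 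You instead apply the spectral theorem directly to the complex measure $\langle E(\cdot)u,v\rangle$ (so no polarization is needed and general $u,v$ are handled at once), pass to the binomial basis of integral polynomials, absorb the constant frequency $\psi_0$ into the measure, and push forward to the space $\T^k$ of effective frequencies, where the paper's equivalence classes collapse to points and the resonant/non-resonant dichotomy becomes literally the atomic/continuous decomposition of a finite complex measure. Both proofs then run on the same two engines: Furstenberg's example together with closure of $\mathcal{N}$ under linear combinations and uniform limits for the resonant part, and Weyl's theorem plus dominated convergence (the resonance set being a countable union of translated diagonals, hence null for a nonatomic measure) for the remainder. Your insistence on discarding $\psi_0$ before the pushforward is exactly the right precaution and plays the role of the paper's normalization $p_j(0)=0$: Weyl's theorem says nothing about constant terms, so without one of these devices the off-atom part need not average to zero (a nonatomic measure supported on a fiber of the nonconstant frequencies would give a nonzero almost periodic contribution). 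What your version buys is cleaner bookkeeping --- the standard atomic decomposition replaces the equivalence-relation and Fubini argument --- at the mild cost of the (classical, but worth stating) facts that integral polynomials are $\Z$-combinations of binomial coefficients and that the monomial and binomial coefficient vectors determine each other over $\Q$, which is what lets you translate Weyl's hypothesis into the condition that some $\psi_i-\psi_i'$ is irrational.
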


Note that Theorem \ref{commutative-case} is a special case of Theorem B. We will give
an independent proof in Section \ref{section-comm}.

\section{Noncommutative toral automorphisms with zero Voiculescu-Brown entropy}\label{section-noncummutative tori}

In this section we study noncommutative toral automorphisms with zero Voiculescu-Brown entropy. First we
review some basic facts about noncommutative tori, then we prove Theorem A assuming Theorem B.
Please refer to \cite{Rieffel, Varilly} for more details on noncommutative tori.

\subsection{Noncommutative tori}\

\medskip

There are several equivalent versions to define a noncommutative torus. For example, as mentioned in the introduction,
let $\Theta = (\theta_{jk})_{1\le j,k\le d}$ be a real skew-symmetric $d\times d$ matrix, and the
{\em noncommutative $d$-torus} $A_\Theta$ is defined as the universal $C^*$-algebra generated by unitaries $u_1,\ldots, u_d$ subject
to the relations
\begin{equation}\label{nil-condition}
  u_ju_k = e^{2\pi i \theta_{jk}}u_k u_j
\end{equation}
for all $1 \le j, k\le d$.

A noncommutative torus $A_\Theta$ can also be seen as a subset of $C^{\infty}(\T^d)$ with a product $*_1$,
where $\Theta=(\theta_{jk})_{1\leq j,k\leq d}$ is a real skew-symmetric $d\times d$ matrix as above.
Let $\mathcal{S}(\Z^d)$ be the space of complex-valued Schwartz functions on $\Z^n$. Then the product
of $A_\Theta$ is induced by the product $*_1$ of $\mathcal{S}(\Z^d)$. To be precise,
let $\F: C^{\infty}(\T^d)\rightarrow \mathcal{S}(\Z^d)$ be the Fourier transform. Then for $u,v\in A_\Theta$, $u*_1v$ is defined by
$$u*_1v=\F^{-1}(\F(u)*_1\F(v)),$$
and
$$\F(u)*_1\F(v)(\vec{p})=-4\pi^2\sum_{\vec{q}}\F(u)(\vec{q})\F(v)(\vec{p}-\vec{q})\exp(-\pi i\gamma(\vec{q},\vec{p}-\vec{q})),$$
where $\vec{p}=(p_1,\ldots,p_d)$, $\vec{q}=(q_1,\ldots,q_d)\in \Z^d$, and
$\gamma (\vec{p},\vec{q}) = \frac{1}{2}\sum_ {1\leq j,k\leq d} \theta_{jk} p_j q_k$.
Let
$$u_j(t)=e(t_j)=\exp(2\pi it_j), \ t=(t_1,\ldots,t_d)\in\T^d.$$
Then $u_1, u_2,\ldots, u_d$ generate $A_\Theta$.

\medskip

The {\em noncommutative torus algebra $\mathcal{A}_\Theta$} is defined as
$$\mathcal{A}_\Theta=\Big\{a=\sum_{\vec{r}\in \Z^d}a_{\vec{r}}u_1^{r_1}\ldots u_d^{r_d}:\
\{a_{\vec{r}}\}_{\vec{r}\in \Z^d}\in \mathcal{S}(\Z^d)\Big\}$$
where $\vec{r}=(r_1,\ldots,r_d)$. Recall that $\{a_{\vec{r}}\}_{\vec{r}\in \Z^d}\in \mathcal{S}(\Z^d)$ means that
$$p_k(a)^2=\sup_{\vec{r}\in \Z^d}(1+r_1^2+\ldots +r_d^2)^k|a_{\vec{r}}|<\infty$$
for all $k\in \N$.
And the topology of $\mathcal{A}_\Theta$ is given by the seminorms  $\{p_k\}_k$.

The restriction of the faithful normalized trace is the linear functional $\tau: \mathcal{A}_\Theta\rightarrow \C$ given by
$$\tau(a):=a_{\vec{0}}.$$
The GNS representation space $$\H_0=L^2(\mathcal{A}_\Theta,\tau)=\{\sum_{\vec{r}\in\Z^d}a_{\vec{r}}u_1^{r_1}\ldots u_d^{r_d}:\ \sum_{\vec{r}\in \Z^d}|a_{\vec{r}}|^2<\infty\}$$ of $A_\Theta$ may be described as the completion of the vector space $\mathcal{A}_\Theta$ in the Hilbert norm
$$\|a\|_{H}:=\tau(a^*a)^{\frac{1}{2}}.$$
Since $\tau$ is faithful, the GNS representation of $\mathcal{A}_\Theta$ is given by
$$\pi_0(a):b\rightarrow \underline{ab},$$
where $\underline{a}$ is the corresponding element in $\H_0$ from $a\in \mathcal{A}_\Theta$.
The closure of $\mathcal{A}_\Theta$ under operator norm is a $C^*$-algebra.

\subsection{Noncommutative toral automorphisms}\

\medskip

For any matrix $S=(s_{j,k})_{1\leq j,k\leq d}$ in $GL(d,\Z)$, there is an isomorphism
$\a_\Theta: A_{S^T\Theta S}\rightarrow A_\Theta$ determined by
\begin{equation}\label{auto}
  \a_\Theta(u_j)=u_1^{s_{1j}}u_2^{s_{2j}}\ldots u_d^{s_{dj}}
\end{equation}
for each $j=1,\ldots,d$. Thus when $S'\Theta S\equiv \Theta \ (\mod M_d(\Z))$ we obtain an automorphism of $A_{\Theta}$.
Please refer to \cite{KL2006} for more details and references.

\medskip

One of main results in \cite{KL2006} is the following theorem:

\begin{thm}\cite[Theorem 2.7.]{KL2006}\label{KerrLi2005}
Let $\a_{\Theta}$ be any non-commutative toral automorphism. Then
 $$ht(\a_{\Theta})\geq \frac{1}{2}\sum_{|\lambda_i|>1}\log |\lambda_i|$$
 where $\lambda_1,\ldots,\lambda_d$ are the eigenvalues of $S$ counted with multiplicity.
\end{thm}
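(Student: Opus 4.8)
The plan is to compute $ht(\a_\Theta)$ in the tracial Gelfand--Naimark--Segal picture and to extract the lower bound from a Voiculescu-type rank estimate, the factor $\frac12$ arising from the purely matricial nature of completely positive approximations. First I would pass to the faithful representation $\pi_0$ of $A_\Theta$ on $\H_0=L^2(\mathcal{A}_\Theta,\tau)$ afforded by the canonical trace $\tau$; since the Voiculescu--Brown entropy is independent of the choice of faithful representation, it suffices to estimate $ht(\pi_0,\a_\Theta)$. In this picture the monomials $u^{\vec r}=u_1^{r_1}\cdots u_d^{r_d}$, $\vec r\in\Z^d$, form an orthonormal basis of $\H_0$, and $\a_\Theta$ is spatially implemented by the unitary $V$ with $Vu^{\vec r}=c_{\vec r}\,u^{S\vec r}$ for unimodular scalars $c_{\vec r}$ produced by the commutation relations \eqref{nil-condition}; thus on the level of frequency labels the automorphism acts by the integer matrix $S$.

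Next I would set $L=\prod_{|\lambda_i|>1}|\lambda_i|$ and isolate the unstable subspace of $S$. Fixing the finite set $\Omega=\{u_1,\dots,u_d\}$ and a small $\delta>0$, the central point is that the $C^*$-subalgebra generated by the orbit $\Omega\cup\a_\Theta\Omega\cup\cdots\cup\a_\Theta^{n-1}\Omega$ contains monomials $u^{\vec s}$ whose labels $\vec s$ range over Minkowski sums of the sets $S^j\{\pm e_1,\dots,\pm e_d\}$, $0\le j<n$. Because $S$ expands by the factor $L$ per step along the unstable directions while $\det S=\pm1$, a volume and lattice-point count shows that the number of distinct such labels, hence of mutually orthonormal monomials generated over $n$ steps, grows at the exponential rate $L^{n}$. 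This exponential family is the only possible source of positive entropy, since the orbit of any fixed finite set of monomials occupies only polynomially many labels $\bigcup_{j<n}S^jF$ (as $S\in GL(d,\Z)$ acts bijectively on $\Z^d$).

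I would then feed this into a rank lower bound. Given a triple $(\phi,\psi,B)\in CPA(\pi_0,A_\Theta)$ with $\|\psi\phi(x)-\pi_0(x)\|<\delta$ for every $x$ in the orbit, approximate multiplicativity of the unital completely positive map $\psi\phi$ (via a Schwarz-inequality and multiplicative-domain argument, using that the $\psi\phi(u_j)$ are close to unitaries) propagates the approximation to the generated monomials, so that the $\asymp L^{n}$ orthonormal vectors $u^{\vec s}$ lie within $\delta'$ of the subspace $\psi(B)x_\tau$, whose dimension is at most $\dim B$. Since a finite-dimensional $B=\bigoplus_i M_{k_i}$ satisfies $\dim B=\sum_i k_i^2\le(\sum_i k_i)^2=\mathrm{rank}(B)^2$, an orthonormal system of size $N$ that is $\delta'$-captured by $\psi(B)x_\tau$ forces $\mathrm{rank}(B)\gtrsim\sqrt N$. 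With $N\asymp L^{n}$ this yields $rcp(\pi_0,\Omega\cup\cdots\cup\a_\Theta^{n-1}\Omega,\delta)\gtrsim L^{n/2}$, whence, after taking $\frac1n\log$, the limit superior, and the suprema over $\delta$ and over finite sets, $ht(\a_\Theta)\ge\frac12\log L=\frac12\sum_{|\lambda_i|>1}\log|\lambda_i|$.

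The main obstacle is the third step: the completely positive approximation is imposed only on the generators, whereas the exponentially large orthonormal family arises from products of length growing linearly in $n$, so a naive iteration of the multiplicative-domain estimate accumulates error that is not uniformly small as $n\to\infty$. Making this rigorous requires either quantitative control of approximate multiplicativity that is insensitive to product length, or, more robustly, a direct construction for each $n$ of an approximately matricial subalgebra of matricial rank $\asymp L^{n/2}$ built from tuples of orbit frequencies whose $\Theta$-pairings $\vec r^{\,T}\Theta\vec s$ have large denominators, together with a proof that the $rcp$ of the orbit dominates that of this block. It is exactly this matricial microstructure of $A_\Theta$, absent in the commutative case of Theorem~\ref{teee}, that is responsible for the appearance of the factor $\frac12$.
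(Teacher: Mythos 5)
First, a point of reference: the paper does not prove this statement at all --- it is imported verbatim as Theorem 2.7 of \cite{KL2006} --- so there is no internal argument to compare yours against, and I can only measure your proposal against the proof in that reference. Your proposal contains a genuine gap, and it is exactly the one you flag in your last paragraph. In the definition of $ht$ the tolerance $\delta$ is fixed \emph{before} $n\to\infty$, and the triple $(\phi,\psi,B)$ is only required to be $\delta$-accurate on the $\asymp dn$ elements of the orbit set $\Omega\cup\a_\Theta\Omega\cup\cdots\cup\a_\Theta^{n-1}\Omega$ itself. Your exponentially large orthonormal family consists of products whose length grows linearly in $n$, and the quantitative multiplicative-domain/Schwarz estimate loses an additive error of order $\sqrt{\delta}$ \emph{per multiplication} (one bounds $\|\psi\phi(ux)-\psi\phi(u)\psi\phi(x)\|$ by a constant times $\|1-\psi\phi(u)^*\psi\phi(u)\|^{1/2}$, which is only $O(\sqrt{\delta})$); after $cn$ multiplications the accumulated error is $\asymp n\sqrt{\delta}$, which exceeds $1$ for every fixed $\delta$, so the $\asymp L^{n}$ monomial vectors are not $\delta'$-captured by $\psi(B)x_\tau$ for any $\delta'<1$ and the rank estimate never starts. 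The fallback you offer (an approximately matricial block of rank $\asymp L^{n/2}$ whose $rcp$ is dominated by that of the orbit) is a restatement of the difficulty, not a construction. There is also a secondary counting problem: the Minkowski sums $\sum_{0\le j<n}S^{j}\{0,\pm e_1,\dots,\pm e_d\}$ contain at most $(2d+1)^{n}$ points, which is smaller than $L^{n}$ whenever $L>2d+1$ (already for $d=2$ with $S=\left(\begin{smallmatrix}6&1\\5&1\end{smallmatrix}\right)$), so even the exponential lower bound on the number of labels needs repair.

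The proof in \cite{KL2006} (extending Voiculescu's argument for $d=2$) avoids all of this, and the factor $\tfrac12$ arises from a different mechanism than your inequality $\dim B\le\mathrm{rank}(B)^2$. One considers the automorphism $\a_\Theta\otimes\a_{-\Theta}$ of $A_\Theta\otimes A_{-\Theta}$, where $A_{-\Theta}\cong\overline{A_\Theta}$. The unitaries $v_j=u_j\otimes\bar u_j$ commute because the phases $e^{2\pi i\theta_{jk}}$ and $e^{-2\pi i\theta_{jk}}$ cancel; they generate a copy of $C(\T^d)$ (faithfulness is checked against $\tau\otimes\tau$, whose restriction is the Haar state); this subalgebra is invariant, and the restricted automorphism is exactly the classical toral automorphism determined by $S$. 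Monotonicity of $ht$ under passing to invariant $C^*$-subalgebras of exact algebras and subadditivity under tensor products (both from \cite{Brown}), together with $ht(\a_{-\Theta})=ht(\a_\Theta)$ and Theorems \ref{teee} and \ref{entropy of torus}, give
\begin{equation*}
2\,ht(\a_\Theta)\ \ge\ ht(\a_\Theta\otimes\a_{-\Theta})\ \ge\ ht\bigl((\a_\Theta\otimes\a_{-\Theta})|_{C(\T^d)}\bigr)\ =\ \sum_{|\lambda_i|>1}\log|\lambda_i|.
\end{equation*}
If you want to salvage a representation-theoretic proof along your lines, you would in effect have to redo the commutative lower bound through the completely positive approximation machinery, and even there the argument works with vectors indexed by the orbit set itself rather than by long products.
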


\subsection{Proof of Theorem A}\

\medskip

Now we assume Theorem B holds and prove Theorem A.
Assume that $ht(\a_{\Theta})$ is zero. Let $u\in A_\Theta$ and $\rho$ be a state on $A_\Theta$.
We will study the sequence $\{\rho(\a_\Theta^n u)\}_{n\in \Z}$, and show that it is an almost
nilsequence. Now for each $\ep>0$, there is a finite set $F\subset \Z^d$
$$\|u-\sum_{\vec{r}\in F}a_{\vec{r}}u_1^{r_1}\ldots u_d^{r_d}\|_\Theta<\ep ,$$
where $\|\cdot\|_\Theta$ is the operator norm of $B(\H_0)$.

Combing this with the facts that $\|\rho\|=1$, $\a_{\Theta}$ is an isometric operator, and almost
nilsequences are closed under linear combination.
 we have that to show that the sequence $\{\rho(\a_\Theta^n u)\}_{n\in \Z}$ is an almost nilsequence,
it is suffices to show that the sequence
$\{\rho(\a_\Theta^n (u_1^{r_1}\ldots u_d^{r_d}))\}_{n\in \Z}$ for each fixed $\vec{r}\in \Z^d$ is an almost nilsequence.

\medskip

Now $\vec{r}=(r_1,\cdots,r_d)\in \Z^d$ is fixed.
Since $ht(\a_{\Theta})$ is zero, then by Theorem \ref{KerrLi2005} each eigenvalues $\lambda_i$ of $S$
satisfies $|\lambda_i|\leq 1$. By Theorem \ref{roots}, it is easy to see that
all $\lambda_i$ are roots of unity.
Let $S=P^{-1}JP$, where $J$ is the Jordan canonical form of $S$.
Then there is a positive integer $m$ such that each eigenvalue of $S^m$ is $1$.
Thus $J^m$ is an upper triangular matrix with all diagonal elements are 1.
For each $n\in \Z$, write $n$ as $n=tm+r$, $0\leq r\leq m-1$.
Hence all elements in $J^{tm+r}$ are polynomials in $t$ for each $0\leq r\leq m-1$.
Note that $S^{tm+r}=P^{-1}J^{tm+r}P$ and all elements of $S$ are integer, then all elements in $S^{tm+r}$ are integral polynomials in $t$ for each $0\leq r\leq m-1$.

For $ 0\leq r\leq m-1$, let
\begin{equation*}
 S^{n}  \left(
     \begin{array}{c}
       r_1 \\
       r_2\\
       \vdots \\
       r_d \\
     \end{array}
   \right)
   =S^{tm+r}  \left(
     \begin{array}{c}
       r_1 \\
       r_2\\
       \vdots \\
       r_d \\
     \end{array}
   \right)
   =\left(
      \begin{array}{c}
        p_{1r}(t) \\
        p_{2r}(t)\\
       \vdots \\
        p_{dr}(t) \\
      \end{array}
    \right)
\end{equation*}
for $t\in \mathbb{Z}$. Then $p_{1r}(t),p_{2r}(t),\cdots,p_{dr}(t)$ are integral polynomials.

\medskip
By (\ref{nil-condition})(\ref{auto}) inductively, we claim that there is a real coefficient polynomial $p_{0r}(t)$ in $t$ such that
\begin{equation}\label{s5}
  \a_\Theta^{tm+r}(u_1^{r_1}\ldots u_d^{r_d})=e(p_{0r}(t)) u_1^{p_{1r}(t)}\ldots u_d^{p_{dr}(t)},
\end{equation}
\begin{proof}[Proof of Claim] By (\ref{nil-condition})(\ref{auto}) inductively, there exist $q_0(n)\in \mathbb{R}$ for $n\in \mathbb{Z}$ such that
$$\a_\Theta^{n}(u_1^{r_1}\ldots u_d^{r_d})=e(q_0(n))u_1^{q_1(n)}\ldots u_d^{q_d(n)} \text{ for }n\in \mathbb{Z},$$
where $q_j(tm+r)=p_{jr}(t)$ for $0\le r \le m-1$ and $t\in \mathbb{Z}$, $j=1,2,\cdots, d$. We have
\begin{equation*}
\begin{split}
\a_\Theta^{n}(u_1^{r_1}\ldots u_d^{r_d})&
=e(q_0(n))u_1^{q_1(n)}\ldots u_d^{q_d(n)}\\
& = \a_\Theta(\a_\Theta^{n-1}(u_1^{r_1}\ldots u_d^{r_d}))\\
& =e(q_0(n-1))\a_\Theta(u_1^{q_1(n-1)}\ldots u_d^{q_d(n-1)})\\
& =e(q_0(n-1))(u_1^{s_{11}}\ldots u_d^{s_{d1}})^{q_1(n-1)}\ldots (u_1^{s_{1d}}\ldots u_d^{s_{dd}})^{q_d(n-1)}.
\end{split}
\end{equation*}
Since
$$u_1^{x_1}\ldots u_d^{x_d}u_1^{y_1}\ldots u_d^{y_d}=e\Big(\sum_{k=1}^d\sum_{j={k+1}}^dx_jy_k\theta_{jk}\Big)u_1^{x_1+y_1}\ldots u_d^{x_d+y_d},$$
one has that
\begin{align*}
&\hskip0.5cm (u_1^{s_{1j}}\ldots u_d^{s_{dj}})^{q_j(n-1)}\\
&=e\Big(\frac{\big(q_j(n-1)-1\big)q_j(n-1)}{2}\sum_{a=1}^d
\sum_{b={a+1}}^ds_{aj}s_{bj}\theta_{ba}\Big)u_1^{s_{1j}q_j(n-1)}\ldots u_d^{s_{dj}q_j(n-1)}.
\end{align*}
Then we have
\begin{equation*}
\begin{split}
q_0(n)&=q_0(n-1)+\sum_{j=1}^d\frac{(q_j(n-1)-1)q_j(n-1)}{2}\sum_{a=1}^d\sum_{b=a}^ds_{aj}
s_{bj}\theta_{ba}\\
& \quad +\sum_{c=1}^{d-1}\Big( \sum_{a=1}^d\sum_{b=a+1}^d\Big(\sum_{e=1}^c s_{be}q_e(n-1)\Big)s_{a(c+1)}q_{c+1}(n-1)\theta_{ba}\Big) \\
& = q_0(n-1)+\sum_{j=1}^d \lambda_j q_j(n-1)+\sum_{1\le j\leq k\le d} \lambda_{j,k}q_i(n-1)q_j(n-1)
\end{split}
(\text{mod } \mathbb{Z})
\end{equation*}
for some $\lambda_j,\lambda_{j,k}\in \C$.

Let $S(n)=\sum_{j=1}^d \lambda_j q_j(n-1)+\sum_{1\le j\le k\le d} \lambda_{j,k}q_i(n-1)q_j(n-1)$. Then  for each $0\leq r\leq m-1$, $S(tm+r)$ is a real coefficient polynomial on $t$. For $0\le r\le m-1$,
let $p_{0r}(0)=q_0(r)$
and define $p_{0r}(t)$ inductively such that
  $$p_{0r}(t)=p_{0r}(t-1)+\sum_{l=1}^mS(tm+r-l)$$
for $t\in \mathbb{Z}$.
   Since $\sum_{l=1}^mS(tm+r-l)$ is a real coefficient polynomial on $t$, we have that $p_{0r}(t)$ is real coefficient polynomials for each $r\in \{0,\ldots, m-1\}$.
Note that $p_{0r}(0)=q_0(r)$ and $$q_0(tm+r)=q_0((t-1)m+r)+\sum_{l=1}^mS(tm+r-l) \, (\text{mod} \mathbb{Z})$$ for $t\in \mathbb{Z}$,
one has $p_{0r}(t)=q_0(tm+r) \,\, (\text{mod} \mathbb{Z})$ for $t\in \mathbb{Z}$, $0\le r\le m-1$. Hence \eqref{s5} holds. This finishes the proof of Claim.
\end{proof}

According to Theorem \ref{GNS-construction}, there are a cyclic representation $\pi_{\rho}$ on a
Hilbert space $\H_\rho$ and a cyclic vector $w\in \H_\rho$ such that
$$\rho(\a_\Theta^{n}(u_1^{r_1}\ldots u_d^{r_d}))=\langle\pi_{\rho}(\a_\Theta^{n}(u_1^{r_1}\ldots u_d^{r_d})w,w\rangle.$$
Let $U_i=\pi_{\rho}(u_i)$ for $1\le i \le d$. Then $U_1,\cdots,U_d$ are unitary operators on  $\H_{\rho}$ and
\begin{align}\label{eq-ujk1}
U_jU_k=e^{2\pi i \theta_{jk}}U_kU_j
\end{align}
for $1\le i,j\le d$, since $u_1,\cdots,u_d$ are unitaries and
satisfy \eqref{nil-condition}.

By (\ref{s5}) for each $r\in \{0,\ldots,m-1\}$,
$\eta_r(t):=\rho\big( \a_\Theta^{tm+r}(u_1^{r_1}\ldots u_d^{r_d}) \big)$ has the form
$$\eta_r(t)=e(p_{0r}(t)) \langle U_1^{p_{1r}(t)}\ldots U_d^{p_{dr}(t)}w,w\rangle.$$
From \eqref{eq-ujk1}, we have that the group of unitary operators generated by $U_1,\ldots, U_d$ is
$2$-step nilpotent. Then by Theorem B, $\{\langle U_1^{p_{1r}(t)}\ldots U_d^{p_{dr}(t)}w,w\rangle\}_{t\in \mathbb{Z}}$
is an almost nilsequence.

Since $p_{0r}(t)$ is a real coefficient polynomial, $\{e(p_{0r}(t))\}_{t\in \mathbb{Z}}$ is a nilsequence and so the sequence $\{\eta_r(t)\}_{t\in \mathbb{Z}}$ is an almost nilsequence by Proposition \ref{prop-algebra}.
Thus by Lemma \ref{nilsequenceTogether}, we have that
$\{\rho(\a_\Theta^{n}(u_1^{r_1}\ldots u_d^{r_d}))\}_{n\in \mathbb{Z}}$ is an almost nilsequence.
This finishes the proof of Theorem A.

\section{Commutative case: Proof of Theorem \ref{commutative-case}}\label{section-comm}

Theorem \ref{commutative-case} is a special case of Theorem B. Since for commutative
case there are more tools, we give a direct proof, which is different from the one in the next section.

\medskip

We need the following Weyl's Theorem.

\begin{thm}[Weyl]\cite{Weyl}
Let $p(n)=a_kn^k+\ldots +a_1n+a_0$ be a real polynomial with at leat one coefficient among $a_1,\ldots,a_k$
irrational. Then the sequence $\{p(n)\}_{n\in \Z}$ is equidistributed modulo $1$.
\end{thm}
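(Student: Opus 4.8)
The plan is to prove that for a real polynomial $p(n)=a_kn^k+\ldots+a_1n+a_0$ with at least one of $a_1,\ldots,a_k$ irrational, the sequence $\{p(n)\}_{n\in\Z}$ is equidistributed modulo $1$. By Weyl's criterion, equidistribution modulo $1$ is equivalent to the statement that for every nonzero integer $h$,
\begin{equation*}
  \lim_{N\to\infty}\frac{1}{N}\sum_{n=1}^N e(h\, p(n))=0,
\end{equation*}
where $e(t)=\exp(2\pi i t)$. Since replacing $p$ by $hp$ preserves the hypothesis that some coefficient of degree $\ge 1$ is irrational (for $h\neq 0$), it suffices to show
\begin{equation*}
  \lim_{N\to\infty}\frac{1}{N}\sum_{n=1}^N e(p(n))=0
\end{equation*}
for every such $p$. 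First I would reduce to this exponential-sum estimate via Weyl's criterion, which I will take as the standard characterization of equidistribution.

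The core of the argument is an induction on the degree $k$ using the van der Corput difference trick. For $k=1$ the polynomial is $p(n)=a_1 n+a_0$ with $a_1$ irrational, and the sum is a geometric series whose modulus is bounded by $1/|\sin(\pi a_1)|$, giving the $O(1/N)$ bound directly. For the inductive step, I would apply the van der Corput inequality: for any $1\le H\le N$,
\begin{equation*}
  \Big|\frac{1}{N}\sum_{n=1}^N e(p(n))\Big|^2
  \le \frac{C}{H}+\frac{C}{H}\sum_{h=1}^{H}\Big|\frac{1}{N}\sum_{n} e\big(p(n+h)-p(n)\big)\Big|,
\end{equation*}
for a suitable absolute constant $C$. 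The key observation is that the difference polynomial $q_h(n)=p(n+h)-p(n)$ has degree exactly $k-1$, and its leading coefficient is $h k\, a_k$ up to lower-order contributions; more importantly, one checks that for each fixed $h\neq 0$ the polynomial $q_h$ still has an irrational coefficient in some degree $\ge 1$, so the inductive hypothesis applies to $q_h$ and the inner averages tend to $0$ as $N\to\infty$.

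The main obstacle is precisely guaranteeing that the difference polynomial $q_h$ retains an irrational coefficient of positive degree, since the irrational coefficient of $p$ might be in a degree where differencing interacts delicately with the other terms. I would handle this by a preliminary normalization: if the top coefficient $a_k$ is irrational, then $q_h$ has leading coefficient $k h\, a_k$ (degree $k-1$), which is irrational, and induction applies. If $a_k$ is rational, I would peel off the rational-coefficient monomials of $p$, arguing that a polynomial all of whose positive-degree coefficients are rational produces a sequence $e(p(n))$ that is periodic modulo some integer $M$ (after clearing denominators), and thus equidistribution for $p$ reduces to the irrational part; carefully, one passes to a residue class $n\equiv r\pmod M$ and finds that on each such class the polynomial again carries the original irrational coefficient of positive degree. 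Combining the two cases gives the inductive step, and letting first $N\to\infty$ and then $H\to\infty$ in the van der Corput bound yields the claim.
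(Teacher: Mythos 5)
Your proposal concerns a statement the paper does not prove at all: Weyl's theorem is quoted from \cite{Weyl} as a classical black box (its only role is as input to the proof of Theorem 6.2), so there is no in-paper argument to compare against. On its own terms, your sketch is the standard and essentially correct route: Weyl's criterion, the geometric-series base case $k=1$, van der Corput differencing for the inductive step, and a case split on whether the leading coefficient is irrational, with rational-coefficient monomials peeled off along residue classes.

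One caveat: the ``key observation'' in your second paragraph is false as stated, and a literal implementation of it would fail. It is not true that for every admissible $p$ and every $h\neq 0$ the difference polynomial $q_h(n)=p(n+h)-p(n)$ retains an irrational coefficient in some degree $\ge 1$. Take $p(n)=\tfrac{1}{2}n^2+\alpha n$ with $\alpha$ irrational: then
\begin{equation*}
q_h(n)=hn+\tfrac{h^2}{2}+\alpha h,
\end{equation*}
whose only positive-degree coefficient is the integer $h$; the irrationality has been pushed into the constant term, $e(q_h(n))=e\bigl(\tfrac{h^2}{2}+\alpha h\bigr)$ is constant in $n$, and the inner averages in your van der Corput bound equal $1$ rather than tending to $0$. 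Consequently differencing may be applied only when the top coefficient $a_k$ is irrational (then $q_h$ has irrational \emph{leading} coefficient $kh\,a_k$, and the induction closes within the class of polynomials with irrational leading coefficient). When $a_k$ is rational you must peel first, exactly as your third paragraph proposes: with $j^*$ the largest degree carrying an irrational coefficient and $M$ clearing the denominators of $a_{j^*+1},\ldots,a_k$, one has $e(p(Mm+r))$ equal to a unimodular constant times $e(\tilde p_r(m))$, where $\tilde p_r$ has degree $j^*$ and irrational leading coefficient $a_{j^*}M^{j^*}$; then the irrational-leading-coefficient case applies on each class, and the $M$ class averages recombine trivially. So the flaw lies in the advertised general claim, not in your final architecture, which repairs it. Two minor points: since the statement concerns $\{p(n)\}_{n\in\Z}$, the criterion is the two-sided average $\frac{1}{2N+1}\sum_{n=-N}^{N}$ as in the paper's display following the theorem; your one-sided estimate suffices after applying it to both $p(n)$ and $p(-n)$, which preserves the hypothesis since the coefficients only change sign. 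And the reduction from $e(h\,p(n))$ to $e(p(n))$ is fine because multiplying by a nonzero integer $h$ preserves irrationality of each coefficient.
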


Note that the sequence $\{p(n)\}_{n\in \Z}$ is equidistributed modulo $1$ if and only if for any $k\neq 0$,
\begin{equation}\label{}
    \lim_{N\to \infty} \frac{1}{2N+1}\sum_{n=-N}^N e(k p(n))=\lim_{N\to \infty} \frac{1}{2N+1}\sum_{n=-N}^N \exp(2\pi i k p(n))=0.
\end{equation}

\medskip

\begin{proof}[Proof of Theorem \ref{commutative-case}]

Let $\mathcal{H}$ be a Hilbert space. Let $U_1,\ldots,U_d\in B(\H)$ be commutative unitary operators
and $p_1(n),\ldots,p_d(n)$ be integral polynomials. We want to prove that for each $u,v\in \H$ the
sequence $\langle U_1^{p_1(n)}\ldots U_d^{p_d(n)}u,v\rangle$ is an almost nilsequence.
First we may assume that $p_1(0)=\ldots=p_d(0)=0$ (if necessary we replace $g(n)$ and
$u$ by $g(n)(g(0))^{-1}$ and $g(0)u$ respectively, where $g(n)=U_1^{p_1(n)}\ldots U_d^{p_d(n)}$).
By the polarization identity (i.e. $\langle g(n)u, v \rangle=\frac 14 (\langle g(n)(u+v), u+v \rangle-\langle g(n)(u-v), u-v \rangle+i \langle g(n)(u+iv),u+i v \rangle-i\langle g(n)(u-iv), u-iv \rangle)$), it suffices to show that for each $u\in \H$ the sequence
$a_n=\langle U_1^{p_1(n)}\ldots U_d^{p_d(n)}u,u\rangle$ is a sum of a nilsequence and a zero-density-sequence.

\medskip

Let $\phi(\vec{n})=\langle U_1^{n_1}\ldots U_d^{n_d}u, u\rangle$ for $\vec{n}=(n_1,\ldots,n_d)\in \Z^d$. Since
$U_1, \ldots, U_d$ are commutative, it is easy to verify that $\phi$ is a normalized continuous
positive definite function on $\Z^d$. Then by Bochner's Theorem there is a probability measure $\mu$ on $\widehat{\Z^d}=\T^d$ such that
\begin{equation}\label{}
  \phi(\vec{n})=\int_{\T^d} e(\langle \xi, \vec{n}\rangle)d\mu(\xi),
\end{equation}
where $\xi=(\xi_1, \ldots, \xi_d)$, $\vec{n}=(n_1,\ldots,n_d)$ and $\langle \xi, \vec{n}\rangle=\sum_{j=1}^d\xi_j n_j$.

Hence we have
\begin{equation}\label{}
  a_n=\langle U_1^{p_1(n)}\ldots U_d^{p_d(n)}u,u\rangle=\int_{\T^d} e(\sum_{j=1}^d \xi_jp_j(n))d\mu(\xi)=\int_{\T^d} e(f_\xi(n))d\mu(\xi),
\end{equation}
where $f_\xi(n):=\sum_{j=1}^d\xi_j p_j(n)$ is a polynomial in $n$ for $\xi\in \mathbb{R}^d$. Notice that one has that $$f_\xi(n)+f_\gamma(n)=f_{\xi+\gamma}(n)$$
for $\xi,\gamma\in \mathbb{R}^d$.
Since all coefficients in $p_1(n),\ldots,p_d(n)$ are rational,  for $\xi\in \mathbb{R}^d$ and $\overrightarrow{k}\in \mathbb{Z}^d$, the followings hold that
\begin{itemize}
\item one of coefficients of $f_{\xi}(n)$ is irrational if and only if one of coefficients of $f_{\xi+\overrightarrow{k}}(n)$.
\item all coefficients of $f_{\xi}(n)$ are rational if and only if all coefficients of $f_{\xi+\overrightarrow{k}}(n)$ are rational.
\end{itemize}
\medskip

First we explain the idea of proof. Note that
\begin{equation*}
\begin{split}
|a_n|^2 & =\int_{\T^d}e(f_{\xi}(n))d\mu(\xi)\overline{\int_{\T^d} e(f_{\gamma}(n))d\mu(\gamma)}\\
& = \int_{\T^d}\int_{\T^d} e((f_{\xi}(n)-f_{\gamma}(n)) d\mu(\xi)d\mu(\gamma)
\\ & =\int_{\T^d}\int_{\T^d}e(f_{\xi-\gamma}(n))d\mu(\xi)d\mu(\gamma).
\end{split}
\end{equation*}
By Weyl's Theorem, if one of coefficients of $f_{\xi-\gamma}(n)$ is irrational, then
$$\lim_{N\to \infty}\frac{1}{2N+1}\sum_{n=-N}^Ne(f_{\xi-\gamma}(n))=0.$$

Let
\begin{equation*}
    W_1=\{(\xi,\gamma)\in \T^d\times \T^d: \text{one of coefficients of $f_{\xi-\gamma}(n)$ is irrational} \},
\end{equation*}
and
\begin{equation*}
    W_2=\{(\xi,\gamma)\in \T^d\times \T^d: \text{all coefficients of $f_{\xi-\gamma}(n)$ are rational} \}.
\end{equation*}
Then for each $(\xi,\gamma)\in W_1$, one has that $\lim_{N\to \infty}\frac{1}{2N+1}\sum_{n=-N}^N e(f_{\xi-\gamma}(n))=0.$
This part will correspond to the zero-density-sequence part of $a_n$. And we will show that nilsequence part of $a_n$
depends on $W_2$.

\medskip

Now we give the details. First it is easy to verify that $W_2$ is measurable.

\medskip

\noindent {\bf Case I:} {\em $\mu\times \mu (W_2)=0$.}

\medskip

In this case, by Lebesgue dominated convergence theorem we have
\begin{equation*}
\begin{split}
  \lim_{N\to \infty}\frac{1}{2N+1}\sum_{n=-N}^N|a_n|^2 & =  \lim_{N\to \infty}\frac{1}{2N+1}\sum_{n=-N}^N \int_{\T^d}\int_{\T^d}e(f_{\xi-\gamma}(n))d\mu(\xi)d\mu(\gamma)\\
  &=\lim_{N\to \infty}\frac{1}{2N+1}\sum_{n=-N}^N\int_{W_1}e(f_{\xi-\gamma}(n))d\mu\times \mu(\xi,\gamma) =0
  \end{split}
\end{equation*}

Hence $\{a_n\}_{n}$ is a zero-density-sequence.

\medskip

\noindent {\bf Case II:} {\em $\mu\times \mu (W_2)>0$.}

\medskip

Recall that $f_\xi(n)=\sum_{j=1}^d\xi_j p_j(n)$, where
$p_1(n),\ldots,p_d(n)$ are fixed integral polynomials. Since $p_1(n),\ldots,p_d(n)$ are fixed, the set
$$\F=\{f_\xi(n): \text{ all coefficients of $f_{\xi}(n)$ are rational }\}$$
is a countable set. Set $\F=\{g_k\}_{k\in I}$ with $g_k\neq g_m$ when $k\neq m\in I$, where $I$ is a countable index set.

Let
\begin{equation*}
    A=\{\xi\in \T^d: f_\xi(n)\in \F\}.
\end{equation*}
Since $A=\bigcup_{k\in I} \{\xi\in \T^d: f_\xi=g_k\}$, it is easy to check that $A$ is a measurable set.

\medskip

Define a relation: $\xi \sim \gamma$ if and only if $\xi-\gamma\in A$. Since $f_\xi(n)+f_\gamma(n)=f_{\xi+\gamma}(n)$, it is easy to verify that $``\sim''$ is an equivalence relation,  and denote the
equivalence class of $\xi$ by $[\xi]=\xi+A$. Then
$$ W_2=\bigcup_{\xi\in \T^d} [\xi]\times [\xi].$$
We claim that there is an at most countable set $\{\xi_m\}_{m}$ such that $\mu([\xi_m])>0$ for all $m$ and
\begin{equation*}
    W_2=_{\mu\times\mu} \bigcup_{m} [\xi_m]\times [\xi_m],
\end{equation*}
where $K=_\nu H$ means $\nu(K\D H)=0$. In fact, by Funini's Theorem,
\begin{equation}\label{s7}
\begin{split}
  \mu\times \mu(W_2)&= \int_{\T^d}\int_{\T^d} \chi_{W_2} (x,y)d\mu(x) d\mu(y) =\int_{\T^d}\Big(\int_{\T^d}\chi_{W_2}(x,y)d\mu(x)\Big)d\mu(y)\\
  &=\int_{\T^d}\Big(\int_{\T^d}\chi_{\bigcup_{\xi\in \T^d} [\xi]\times [\xi]}(x,y)d\mu(x)\Big)d\mu(y)\\
  &= \int_{\T^d}\Big(\int_{\T^d}\chi_{[y]\times [y]}(x,y)d\mu(x)\Big)d\mu(y)\\
  &=\int_{\T^d}\mu([y])d\mu(y)
  \end{split}
\end{equation}
Since there is only at most countable set $\{\xi_m\}_{m}$ such that $\mu([\xi_m])>0$ for all $m$, by equation $(\ref{s7})$ one has that
$$\mu\times \mu(W_2)=\sum_{m}\big(\mu([\xi_m])\big)^2=\mu\times \mu(\bigcup_{m} [\xi_m]\times [\xi_m]).$$
Hence we get what we need.

\medskip

Now let
\begin{equation}\label{}
    B=\bigcup_m [\xi_m]=\bigcup_m(A+\xi_m)\subseteq \T^d.
\end{equation}
Let $\mu_B=\mu|_{B}=\frac{\mu(\ \cdot \ \cap B)}{\mu(B)}$ and $\mu_{B^C}=\mu|_{X\setminus B}
=\frac{\mu(\ \cdot \ \cap (X\setminus B))}{\mu(X\setminus B)}$. Then we have a decomposition of $\mu$
\begin{equation*}
    \mu=p\mu_B+ (1-p)\mu_{B^C},
\end{equation*}
where $p=\mu(B)\in (0,1]$. Note that when $p=\mu(B)=1$, $\mu=\mu_B$
and $\mu_{B^C}$ will not be considered.

\medskip
Then
\begin{equation}\label{}
  a_n=\int_{\T^d} e(f_\xi(n))d\mu(\xi)=p\int_{\T^d} e(f_\xi(n))d\mu_B(\xi)+(1-p)\int_{\T^d} e(f_\xi(n))d\mu_{B^C}(\xi),
\end{equation}
Set
\begin{equation}\label{}
  b_n=p\int_{\T^d} e(f_\xi(n))d\mu_B(\xi),\quad  c_n=(1-p)\int_{\T^d} e(f_\xi(n))d\mu_{B^C}(\xi).
\end{equation}
To end the proof, we will show that $\{b_n\}_n$ is a nilsequence and $\{c_n\}_n$ is a zero-density-sequence.

By definition, $\mu_{B^C}\times \mu_{B^C}(W_2)=\mu_{B^C}\times \mu_{B^C}(\bigcup_{n} [\xi_n]\times [\xi_n])=0$.
For each $(\xi,\gamma)\in W_1$, one has that $\displaystyle \lim_{N\to \infty}\frac{1}{2N+1}\sum_{n=-N}^N e(f_{\xi-\gamma}(n))=0$.
Hence by Lebesgue dominated convergence theorem
\begin{equation*}
\begin{split}
  & \quad \quad \frac{1}{(1-p)^2}\lim_{N\to \infty}\frac{1}{2N+1}\sum_{n=-N}^N|c_n|^2 \\
  & =  \lim_{N\to \infty}\frac{1}{2N+1}\sum_{n=-N}^N \int_{\T^d}\int_{\T^d}e(f_{\xi-\gamma}(n))d\mu_{B^C}(\xi)d\mu_{B^C}(\gamma)\\
  &=\lim_{N\to \infty}\frac{1}{2N+1}\sum_{n=-N}^N\int_{W_1}e(f_{\xi-\gamma}(n))d\mu_{B^C}\times \mu_{B^C}(\xi,\gamma) =0.
  \end{split}
\end{equation*}
That is, $\{c_n\}_n$ is a zero-density-sequence.

\medskip

As to $b_n$, one has that
\begin{equation*}
\begin{split}
    b_n&=p\int_{\T^d} e(f_\xi(n))d\mu_B(\xi)=\int_{B}e(f_\xi(n))d\mu(\xi)\\
    &=  \int_{\bigcup_m (A+\xi_m)} e(f_\xi(n))d\mu(\xi) = \sum_m \int_{A+\xi_m} e(f_\xi(n))d\mu(\xi).
\end{split}
\end{equation*}
Note that $A=\bigcup_{k\in I} \{\xi\in \T^d: f_\xi=g_k\}$, and hence
\begin{equation*}
\begin{split}
    b_n&= \sum_m \int_{A+\xi_m} e(f_\xi(n))d\mu(\xi)\\
    &= \sum_m \sum_{k}\int_{\{\xi\in \T^d: f_\xi=g_k\}+\xi_m} e(f_\xi(n))d\mu(\xi)\\
    &= \sum_m \sum_{k}\mu(\{\xi\in \T^d: f_\xi=g_k\}) e\Big(g_k(n)+f_{\xi_m}(n)\Big).
\end{split}
\end{equation*}
Since for each $k,m$, $\Big\{e\big(g_k(n)+f_{\xi_m}(n)\big)\Big\}_{n\in \Z}$ is a nilsequence, $\{b_n\}_{n\in \Z}$ is also a nilsequence. The proof is completed.
\end{proof}

\section{nilpotent case: Proof of Theorem B}\label{section-nilcase}

In this section we will prove Theorem B. Our proof relies on some results developed by Bergelson and Leibman \cite{BL2002, L2000}.

\subsection{$G$-polynomials}\

\medskip

Let $G$ be a nilpotent group with unit $e_G$. For a sequence $g:\Z\rightarrow G$ in $G$, we define the
{\em derivative} of $g$ as the sequence $Dg: \Z\rightarrow G$ with $Dg(n)=g(n)^{-1}g(n+1)$. $g$ is a
{\em polynomial sequence} or {\em $G$-polynomial} if $D^dg\equiv e_G$ for some $d\in \N$.


It can be shown \cite{L1998} that for a nilgroup $G$, a sequence $g$ in $G$ is a $G$-polynomial if and
only if it is representable in the form
\begin{equation}\label{polynomial}
    g(n) = U_1^{p_1(n)}\ldots U_r^{p_r (n)},
\end{equation}
where $U_1,\ldots, U_r\in G$ and $p_1,\ldots,p_r$ are integral polynomials. If $g(0)=e_G$,
one can additionally assume that $p_1(0) =\ldots = p_r (0) = 0$.

\subsection{Decomposition of a Hilbert space}\

\medskip

Let $\H$ be a Hilbert space and let $G$ be a group of unitary operators on $\H$. Then $\H$ is the sum of
two $G$-invariant orthogonal subspaces:
\begin{align}\label{decomposition}
\H=\H^c(G)\oplus\H^{wm}(G)
\end{align}
such that $G$ has discrete spectrum on $\H^c(G)$ and it is weakly mixing on $\H^{wm}(G)$. The space $\H^c(G)$
is spanned by finite-dimensional $G$-invariant subspaces of $\H$, and consists of vectors whose orbits under the
action of $G$ are precompact:
$$\H^c(G)=\{u\in \H: Gu \ \text{is precompact in }\H \}.$$ The space $\H^{wm}(G)$ is the maximal $G$-invariant subspace
$\mathcal{M}$ of
$\H$ such that for any unitary action of $G$ on a Hilbert space $\mathcal{N}$, the space
$\mathcal{M}\otimes \mathcal{N}$ does not contain non-zero elements which are invariant
with respect to the $G$-action. If $G$ is an amenable (in particular, abelian or
nilpotent) group, then $\H^{wm}(G)$ can also be described as the maximal subspace of $\H$ such
that for any $\ep>0$ and any $u \in \H^{wm}(G)$ and $v\in \H$, the set $\{T\in G: |\langle Tu, v \rangle|>\ep \}$ has
zero density in $G$ (with respect to a F{\o}ner sequence).

\subsection{Leibman's Result}\

\medskip

Given $u\in \H$.
We say $G$ is {\em compact} on $u$ if $Gu$ is precompact in $\H$. Let $T$ be a unitary operator on $\H$.
We say that $T$ is {\em compact} on $u$ if the group $\{T^n:n\in \Z\}$ is compact on $u$. We say that $T$
is {\em weakly mixing on $u$} if for all $v\in \H$,
$$\lim_{N\to \infty}\frac{1}{2N+1} \sum_{n=-N}^N | \langle T^nu,v \rangle|=0.$$
Define
\begin{equation*}
  \H^c(T)=\{u\in \H: T \ \text{ is compact on}\ u\},
\end{equation*}
\begin{equation*}
  \H^{wm}(T)=\{u\in \H: T \ \text{ is weakly mixing on}\ u\}.
\end{equation*}
For $T\in G$, we have
\begin{equation*}
  \H=\H^c(T)\oplus\H^{wm}(T).
\end{equation*}

For a general group $G$ of unitary operators on $\H$, one can only claim $\H^c(G)\subseteq \H^c(T)$ and hence $\H^{wm}(T)\subseteq \H^{wm}(G)$ for each $T\in G$.
But if $G$ is a finitely generated nilpotent group of unitary operators, then one can say more. In fact Leibman
showed the following interesting theorem:

\begin{thm}[Leibman]\cite[Corollary 3.3]{L2000}
Let $G$ be a finitely generated nilpotent group of unitary operators on a Hilbert space $\H$, and
let $T_1,T_2,\ldots, T_r$ generate $G$. Then
\begin{equation}\label{cap-6.2}
  \H^c(G)=\bigcap_{T\in G}\H^c(T)=\bigcap_{i=1}^r\H^c(T_i).
\end{equation}
\end{thm}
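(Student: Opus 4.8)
The two inclusions $\H^c(G)\subseteq\bigcap_{T\in G}\H^c(T)\subseteq\bigcap_{i=1}^r\H^c(T_i)$ are immediate: if $Gu$ is precompact then so is the subset $\{T^nu:n\in\Z\}$ for every $T\in G$, and each $T_i$ belongs to $G$. Hence the whole content lies in the reverse inclusion $W:=\bigcap_{i=1}^r\H^c(T_i)\subseteq\H^c(G)$; equivalently, passing to orthogonal complements in $\H=\H^c(G)\oplus\H^{wm}(G)$ and $\H=\H^c(T)\oplus\H^{wm}(T)$, the assertion is that $\H^{wm}(G)=\overline{\sum_{i=1}^r\H^{wm}(T_i)}$. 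I would prove $W\subseteq\H^c(G)$ by induction on the nilpotency class $d$ of $G$.

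For the base case $d=1$ the group $G$ is abelian and $T_1,\dots,T_r$ commute, so $(n_1,\dots,n_r)\mapsto T_1^{n_1}\cdots T_r^{n_r}$ is a unitary representation of $\Z^r$. By the SNAG theorem there is a projection-valued measure $E$ on $\widehat{\Z^r}=\T^r$ with $T_1^{n_1}\cdots T_r^{n_r}=\int_{\T^r}\lambda_1^{n_1}\cdots\lambda_r^{n_r}\,dE(\lambda)$; for $u\in\H$ put $\nu_u=\langle E(\cdot)u,u\rangle$. The orbit $Gu$ is precompact iff $u$ lies in the closed span of the joint eigenvectors, i.e. iff $\nu_u$ is purely atomic, so $\H^c(G)=\{u:\nu_u\text{ atomic}\}$; since $T_i$ corresponds to the $i$-th coordinate, its scalar spectral measure is the pushforward $(\pi_i)_*\nu_u$ under the projection $\pi_i:\T^r\to\T$, whence $\H^c(T_i)=\{u:(\pi_i)_*\nu_u\text{ atomic}\}$. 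The base case therefore reduces to the elementary measure-theoretic fact that a finite Borel measure on $\T^r$ all of whose coordinate marginals are purely atomic is itself purely atomic: if $A_i\subseteq\T$ is the countable atom set of $(\pi_i)_*\nu_u$, then $\nu_u(\pi_i^{-1}(A_i))=1$ for each $i$, so $\nu_u\big(\prod_iA_i\big)=1$, and $\prod_iA_i$ is countable.

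For the inductive step take $d\ge2$ and let $Z=G_d$ be the last nontrivial term of the lower central series, a central (hence normal) and finitely generated abelian subgroup, with $G/Z$ of class $d-1$. Because $Z$ is central, the projections onto the summands of $\H=\H^c(Z)\oplus\H^{wm}(Z)$ commute with every $T_i$, so this splitting is $G$-invariant and $W=(W\cap\H^c(Z))\oplus(W\cap\H^{wm}(Z))$. On $\H^c(Z)$ the central subgroup acts by scalars on each of its (countably many) isotypic components, which $G$ merely permutes; reducing the $G$-action on $\H^c(Z)$ to that of the class-$(d-1)$ quotient $G/Z$ in this way and applying the induction hypothesis, I would obtain $W\cap\H^c(Z)\subseteq\H^c(G)$. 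Thus everything comes down to killing the other summand, i.e. to proving $W\cap\H^{wm}(Z)=0$.

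The step I expect to be the main obstacle is exactly this: showing that compactness under the generators $T_i$ propagates to compactness under the central commutator subgroup $Z$. Concretely, since $Z$ is finitely generated abelian, the already-established abelian case gives $\H^c(Z)=\bigcap_{S}\H^c(S)$ over a finite generating set of $Z$, and each such $S$ is an iterated ($d$-fold) commutator of $T_1,\dots,T_r$; so it suffices to prove $W\subseteq\H^c(S)$ for each such $S$, which then forces $W\subseteq\H^c(Z)$ and hence $W\cap\H^{wm}(Z)\subseteq\H^c(Z)\cap\H^{wm}(Z)=0$, closing the induction. This is precisely where the commutator structure is essential and where the polynomial-ergodic machinery of Bergelson--Leibman \cite{BL2002, L2000} enters: for $u\in W$ one must control the averages governing $\langle S^nu,v\rangle$ along the commutator $S$, and I would carry this out by a van der Corput estimate together with a PET-type induction that expresses powers of $S$ through lower-order commutators of the $T_i$, on each of which $u$ is already compact, thereby transferring almost periodicity from the generators to $S$.
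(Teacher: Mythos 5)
First, a point of reference: the paper does not prove this statement at all --- it is quoted from Leibman \cite[Corollary 3.3]{L2000} and used as a black box (its only role is the identity \eqref{cap-6.2} invoked in the proof of Proposition \ref{zero-density-sequence}). So there is no in-paper argument to compare yours against, and what follows is an assessment of your sketch on its own terms. The two trivial inclusions and the abelian base case are correct and complete; the SNAG/marginal argument that a finite Borel measure on $\T^r$ with purely atomic coordinate marginals is itself purely atomic is exactly right. The skeleton of the induction (split $\H=\H^c(Z)\oplus\H^{wm}(Z)$ for the central subgroup $Z=G_d$, observe that the splitting is $G$-invariant and compatible with $W$) is also sound. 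One secondary gap: on a $\chi$-isotypic component of $\H^c(Z)$ the quotient $G/Z$ acts only \emph{projectively}, not unitarily, so the inductive hypothesis does not apply verbatim; this is fixable (pass to $\pi\otimes\overline{\pi}$ on $\H_\chi\otimes\overline{\H_\chi}$, where $Z$ acts trivially and a genuine class-$(d-1)$ unitary action of $G/Z$ survives, and note that precompactness of $\{gu\otimes\overline{gu}\}$ forces precompactness of $Gu$ modulo the compact group of phases), but it needs to be said.

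The decisive gap is the step you yourself flag: proving $W\subseteq\H^c(S)$ for $S$ an iterated commutator of the generators, equivalently $W\cap\H^{wm}(Z)=0$. This is not a technical obstacle to be discharged by ``a van der Corput estimate together with a PET-type induction''; it is the entire content of Leibman's theorem, and the proposed tools point the wrong way. Van der Corput and PET are designed to prove that averages vanish, i.e.\ to place vectors in $\H^{wm}$; here you need the opposite conclusion, that $u$ is \emph{compact} for $S$, i.e.\ lies in the closed span of finite-dimensional $S$-invariant subspaces, and a decay estimate does not by itself exhibit such structure. Moreover, the Bergelson--Leibman machinery you invoke (\cite{BL2002}, Theorem \ref{BL} in this paper) is built on the structural theory of \cite{L2000} that contains the very statement you are proving, so the appeal is close to circular. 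What is actually required is a direct analysis of how conjugation by one generator permutes the eigenspaces of another --- using relations of the form $TS=STc$ with $c$ lying deeper in the lower central series --- to transfer almost periodicity from $T_1,\dots,T_r$ to their commutators; this is the argument carried out in \cite{L2000}, and until it is supplied your proposal is an outline rather than a proof.
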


\subsection{Bergelson-Leibman Theorem}\

\medskip

Given a nilpotent group $G$ of unitary operators on
a Hilbert space $\H$ and a $G$-polynomial $g(n)$ satisfying $g(0) = e_G$, Let $E$ be the
subgroup of $G$ generated by the elements of $g$ and let
$$H^{(l)}(g) =\{u\in \H: P^lu=u \ \text{for all $P\in E$}\}, \ l\in \N.$$
And let
\begin{equation*}
  \H^{\rm rat}(g)=\bigcup_{l\in \N} \H^{(l)}(g).
\end{equation*}

\begin{thm}[Bergelson-Leibman]\cite[Theorem D.]{BL2002}\label{BL}
Let $G$ be a finitely generated nilpotent group of unitary operators on
a Hilbert space $\H$. If $g(n)$ is a $G$-polynomial with $g(0) = e_G$ and  $u\in \H$ is such that
$u\perp \H^{\rm rat}(g)$, then $$\lim_{N\to \infty} \frac{1}{2N+1}\sum_{n=-N}^N g(n)u=0.$$
\end{thm}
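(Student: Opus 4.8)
The plan is to establish the norm convergence by the van der Corput differencing method combined with Bergelson's polynomial exhaustion (PET) induction, taking the base case from the mean ergodic theorem and Leibman's identity \eqref{cap-6.2}. Throughout I write $A_N u=\frac{1}{2N+1}\sum_{n=-N}^N g(n)u$, a bounded family since each $g(n)$ is unitary, and the goal is $\|A_N u\|\to 0$ in norm.

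First I would invoke the Hilbert-space van der Corput lemma: if
\[
\lim_{H\to\infty}\frac{1}{H}\sum_{h=1}^H\ \limsup_{N\to\infty}\Big|\frac{1}{2N+1}\sum_{n=-N}^N\langle g(n+h)u,\,g(n)u\rangle\Big|=0,
\]
then $\|A_N u\|\to 0$. Because the $g(n)$ are unitary,
\[
\langle g(n+h)u,\,g(n)u\rangle=\langle g(n)^{-1}g(n+h)u,\,u\rangle=\langle g_h(n)u,\,u\rangle,\qquad g_h(n):=g(n)^{-1}g(n+h).
\]
By the group structure of $G$-polynomials and the representation \eqref{polynomial}, each $g_h$ is again a $G$-polynomial with $g_h(0)=g(h)$, and its ``derivative order'' is one lower than that of $g$. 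Thus the whole problem reduces to controlling, for most $h$, the averages $\frac{1}{2N+1}\sum_n g_h(n)u$ of a strictly simpler $G$-polynomial.

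This is where I would set up the induction. Following Bergelson--Leibman I would attach to a finite system of $G$-polynomials a well-founded complexity (the PET ``weight'', built from the degrees read off the lower central series of $G$) and verify the combinatorial fact that passing from $\{g\}$ to the differenced system $\{g_h\}$ strictly lowers this weight; this packages the double induction on nilpotency class and degree. The base case is when $g$ takes values in a cyclic subgroup, say $g(n)=T^{cn}$ for a generator $T=T_i$ and an integer $c$: then $A_N u$ converges by the mean ergodic theorem to the orthogonal projection of $u$ onto $\ker(T^c-I)$, while $E=\langle T^c\rangle$ gives $\H^{(1)}(g)=\ker(T^c-I)$, so $u\perp\H^{\rm rat}(g)$ forces the limit to vanish. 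Leibman's identity \eqref{cap-6.2}, $\H^c(G)=\bigcap_i\H^c(T_i)$, is the structural input guaranteeing, in the general base configuration, that the group-level compact subspace is already seen by the generators, which is exactly what makes $\H^{\rm rat}(g)$ the correct obstruction.

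The hard part will be the bookkeeping of the rational subspaces through the induction. Since $g_h(n)=g(n)^{-1}g(n+h)$ lies in $E$, the group $E_h=\langle g_h(n):n\in\Z\rangle$ satisfies $E_h\subseteq E$, hence $\H^{\rm rat}(g)\subseteq\H^{\rm rat}(g_h)$; the inclusion runs the ``wrong'' way, so the hypothesis $u\perp\H^{\rm rat}(g)$ does not make $u$ orthogonal to $\H^{\rm rat}(g_h)$, and the inductive hypothesis only disposes of the component of $u$ orthogonal to the larger space $\H^{\rm rat}(g_h)$. I would therefore split $u=v_h+w_h$ with $v_h$ the projection onto $\overline{\H^{\rm rat}(g_h)}$: the $w_h$-part contributes $0$ by the inductive hypothesis, while on $\H^{\rm rat}(g_h)$ the action of $g_h$ is periodic, so $\frac{1}{2N+1}\sum_n g_h(n)v_h$ converges to a projection $P_h v_h$ onto the $g_h$-invariant vectors. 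The crux, and the step I expect to be genuinely delicate, is to show that the residual quantities $|\langle P_h v_h,\,u\rangle|$ average to $0$ over $h\in\{1,\dots,H\}$ as $H\to\infty$: this requires exploiting $u\perp\H^{\rm rat}(g)$ together with the way the subspaces $\H^{\rm rat}(g_h)$ vary with $h$ inside the nilpotent group, and it is here that the noncommutativity of $G$ and the precise form of the PET weight must be reconciled so that the induction closes.
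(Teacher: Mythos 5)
You should first note that the paper itself does not prove this theorem: it is quoted verbatim from \cite[Theorem D]{BL2002}, so the only meaningful comparison is with Bergelson--Leibman's original argument. Your outline does reproduce the correct architecture of that proof --- a Hilbert-space van der Corput lemma feeding a PET induction on a well-founded weight, with the structure theory of unitary actions of finitely generated nilpotent groups from \cite{L2000} (in particular \eqref{cap-6.2}) supplying the base-case input. But what you have written is a plan, not a proof: you yourself flag the ``genuinely delicate'' step and then stop, and that step is exactly where the entire content of the theorem lives. After splitting $u=v_h+w_h$ along $\H^{\rm rat}(g_h)$, you must show
\begin{equation*}
\lim_{H\to\infty}\frac{1}{H}\sum_{h=1}^{H}\bigl|\langle P_h v_h,\,u\rangle\bigr|=0,
\end{equation*}
and this is the only place the hypothesis $u\perp\H^{\rm rat}(g)$ can enter; your proposal offers no mechanism for it. In arguments of this type the mechanism is an eigenvalue analysis: on the compact/rational components the differenced sequences act through character factors of the form $e(p(h)\theta)$, and averaging over $h$ annihilates every contribution except those with rational $\theta$, which assemble precisely into $\H^{\rm rat}(g)$. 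Without carrying out that analysis the induction does not close, so the proposal has a genuine gap at its crux rather than a routine bookkeeping issue.

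Two further points would fail as written even before that. First, $g_h(0)=g(h)\neq e_G$, so the inductive statement (which requires $g(0)=e_G$ and orthogonality to $\H^{\rm rat}$) does not apply to $g_h$ directly; one must renormalize by $g(h)^{-1}$ and track how this moves the vectors, which interacts with the $h$-dependence of the split $u=v_h+w_h$. Second, your assertion that the derivative order of $g_h(n)=g(n)^{-1}g(n+h)$ is ``one lower'' than that of $g$ is simply false for noncommutative $G$: the derivative of a product picks up commutator terms living deeper in the lower central series, and the naive degree can fail to drop. This is precisely why \cite{BL2002} must work with \emph{systems} of polynomial sequences and a genuinely two-dimensional weight (degree data along the lower central series), and why the weight-decrease lemma for the differenced system is a substantial verification there, not the one-line combinatorial fact you assert. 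Since a single application of van der Corput already produces an $h$-indexed family, the induction has to be formulated for families from the outset; your single-sequence formulation cannot even state the inductive hypothesis that the argument needs.
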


\subsection{Some basic notations and facts}\

\medskip

To prove Theorem B, we need more notations and facts.
Recall that a {\em unitary representation} of
$G$ on a Hilbert space $\H$ is a map $G \rightarrow  \B(\H)$ which is a homomorphism into the
group of unitary operators on $\H$.
If $U$ and $V$ are unitary
operators on $\H$ and $\mathcal{K}$, respectively, then there is a unitary operator $U \otimes V$
on $\H \otimes \mathcal{K}$ determined on elementary tensors by $(U\otimes V)(\xi\otimes \zeta)=U\xi \otimes V\zeta$.
The tensor product
$\pi\otimes \rho$ of unitary representations $\pi : G\rightarrow  \B (\H)$ and $\rho : G\rightarrow \B(\mathcal{K})$
is the unitary
representation of $G$ given by $s\mapsto \pi(s)\otimes \rho(s)$.

\medskip

Let $\H $ be a Hilbert space. Its conjugate $\overline{\H}$ is the Hilbert space which is the same
as $\H$ as an additive group but with the scalar multiplication $(c, \xi) \mapsto \bar{c}\xi$ for $c\in \C$
and inner product $\langle \overline{\xi}, \overline{\zeta} \rangle_{\overline{\H}}=\overline{\langle \xi, \zeta\rangle}_{{\H}}$.
If $u\in \H$, we write $\overline{u}$ for the corresponding element in $\overline{\H}$.
If $U$ is a unitary operator on $\H$, then the operator $\overline{U}$
on $\overline{\H}$ which formally coincides with $U$ is also unitary. Given a unitary representation
$\pi: G\rightarrow \B(\H)$, its conjugate $\overline{\pi} : G \rightarrow  \B(\overline{\H})$ is the unitary
representation defined by $s\mapsto \overline{\pi(s)}$.

\medskip

We will need the following well known facts:

\begin{thm}\cite[Lemma 4.18.]{F}\label{Fur}
Let $U$ and $U'$ be unitary operators on Hilbert spaces $\H$ and $\H'$ respectively and let
$w\in \H\otimes \H'$ be an eigenvectors of $U\times U'$: $U\otimes U' w= \lambda w$. Then
$w=\sum_n c_n u_n\otimes u_n'$ where $Uu_n=\lambda_n u_n, U'u_n'=\lambda_n'u_n$ and
$\lambda_n\lambda_n'=\lambda$, and the sequences $\{u_n\}_n, \{u_n'\}_n$ are orthonormal.
\end{thm}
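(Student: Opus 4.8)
The plan is to realize the eigenvector $w$ as a Hilbert--Schmidt operator and to read off the desired decomposition from a singular value decomposition that is adapted to $U$ and $U'$. Concretely, I would fix orthonormal bases $\{e_i\}$ of $\H$ and $\{f_j\}$ of $\H'$, write $w=\sum_{i,j}a_{ij}\,e_i\otimes f_j$ with $\sum_{i,j}|a_{ij}|^2=\|w\|^2<\infty$, and associate to $w$ the operator $A\colon\overline{\H'}\to\H$ whose matrix is $(a_{ij})$, using the conjugate space $\overline{\H'}$ introduced just above so that a transpose will later turn into an adjoint. The crucial structural point is that $\sum_{i,j}|a_{ij}|^2<\infty$, so $A$ lies in $\mathrm{HS}(\overline{\H'},\H)$ and is in particular compact; this finiteness, inherited from $w\in\H\otimes\H'$, is exactly what will force the decomposition to be a discrete sum even when $U$ and $U'$ have continuous spectrum individually.

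A direct computation shows that $U\otimes U'$ acts on coefficient matrices by $A\mapsto UA(U')^{T}$, so the hypothesis $(U\otimes U')w=\lambda w$ becomes $UA(U')^{T}=\lambda A$. Since $U'$ is unitary one has $((U')^{T})^{-1}=\overline{U'}$, whence the relation takes the clean operator form
\[
  UA=\lambda A\,\overline{U'},
\]
with $\overline{U'}$ the unitary operator on $\overline{\H'}$ defined in the previous paragraph. Rewriting this as $A=\lambda U^{*}A\,\overline{U'}$, taking adjoints, and using $U^{*}U=I$ and $\overline{U'}^{*}\,\overline{U'}=I$, I would deduce that $AA^{*}$ commutes with $U$ on $\H$ while $A^{*}A$ commutes with $\overline{U'}$ on $\overline{\H'}$.

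Next I would apply the spectral theorem to the compact positive operators $A^{*}A$ and $AA^{*}$: every nonzero eigenvalue $s^2$ has a finite-dimensional eigenspace, and by the commutation relations these eigenspaces are $\overline{U'}$-invariant (resp. $U$-invariant). On each finite-dimensional eigenspace of $A^{*}A$ the unitary $\overline{U'}$ is diagonalizable, so I can choose an orthonormal basis $\{\phi_n\}$ of $(\ker A)^{\perp}\subseteq\overline{\H'}$ of joint eigenvectors, with $A^{*}A\,\phi_n=s_n^2\phi_n$ and $\overline{U'}\phi_n=\nu_n\phi_n$, $|\nu_n|=1$. Putting $\psi_n=s_n^{-1}A\phi_n$ produces an orthonormal system in $\H$, and the relation $UA=\lambda A\,\overline{U'}$ gives $U\psi_n=\lambda\nu_n\psi_n$. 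Unwinding the conjugation, $\phi_n=\overline{u_n'}$ for an eigenvector $u_n'$ of $U'$ with $U'u_n'=\overline{\nu_n}\,u_n'$, and the singular value decomposition $A=\sum_n s_n\langle\,\cdot\,,\phi_n\rangle\psi_n$ translates back to $w=\sum_n c_n\,u_n\otimes u_n'$ with $u_n=\psi_n$ and $c_n=s_n$. Then $\{u_n\}$ and $\{u_n'\}$ are orthonormal, $Uu_n=\lambda_n u_n$ with $\lambda_n=\lambda\nu_n$, $U'u_n'=\lambda_n'u_n'$ with $\lambda_n'=\overline{\nu_n}$, and $\lambda_n\lambda_n'=\lambda|\nu_n|^2=\lambda$, as required.

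The main obstacle I anticipate is bookkeeping rather than conceptual: arranging the identification $\H\otimes\H'\cong\mathrm{HS}(\overline{\H'},\H)$ with the conjugates placed correctly, so that the transpose $(U')^{T}$ from the naive coefficient computation genuinely becomes the unitary $\overline{U'}$ and the balance relation $\lambda_n\lambda_n'=\lambda$ emerges with the right orientation. Once the equation $UA=\lambda A\,\overline{U'}$ is secured, the compactness of $A$ together with the finite-dimensional diagonalization of unitaries carries out all of the real work, and convergence of $\sum_n c_n\,u_n\otimes u_n'$ to $w$ is immediate from the Hilbert--Schmidt norm $\sum_n s_n^2=\|w\|^2$.
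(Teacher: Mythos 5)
Your argument is correct, and it is essentially the standard proof of this statement: the paper itself gives no proof, citing Furstenberg's Lemma 4.18, whose argument is precisely this identification of $\H\otimes\H'$ with Hilbert--Schmidt operators $\overline{\H'}\to\H$, followed by the spectral theorem for the compact positive operators $A^*A$ and $AA^*$ and joint diagonalization of the commuting unitary on their finite-dimensional eigenspaces. The one point to make explicit is that the cancellation of the factor $|\lambda|^2$ in your commutation computation (e.g. $U(AA^*)U^*=|\lambda|^2\,AA^*$) requires $|\lambda|=1$, which is automatic since $\lambda$ is an eigenvalue of the unitary $U\otimes U'$ (for $w\neq 0$); with that line added, every step --- the relation $UA=\lambda A\,\overline{U'}$, the system $\psi_n=s_n^{-1}A\phi_n$, the eigenvalue bookkeeping $\lambda_n\lambda_n'=\lambda$, and convergence from $\sum_n s_n^2=\|w\|^2$ --- checks out.
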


\begin{prop}\label{eigenvectors}
Let $T$ be a unitary operator on $\H$ and $k\in \N$. Then $\H^c(T)$ is the closure of the space spanned by eigenvectors of $T$, and
\begin{equation*}\label{}
  \H^c(T^k)=\H^c(T).
\end{equation*}
\end{prop}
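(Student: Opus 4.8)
The plan is to prove the two assertions separately: the identification of $\H^c(T)$ with the closed span of eigenvectors via the spectral theorem, and the equality $\H^c(T^k)=\H^c(T)$ by an elementary covering argument. First I would record that $\H^c(T)$ is a closed $T$-invariant subspace: writing $G=\{T^n:n\in\Z\}$, if $Gu$ and $Gv$ are precompact then $G(u+v)\subseteq\overline{Gu}+\overline{Gv}$ is precompact, scalar multiples are clear, and a limit of vectors with precompact orbit again has precompact orbit because $T$ is an isometry (so total boundedness passes to the limit). Every eigenvector $u$, say $Tu=\lambda u$ with $|\lambda|=1$, satisfies $T^nu=\lambda^nu$, so its orbit lies on a circle and is precompact; hence the closed span $\H^{pp}$ of the eigenvectors of $T$ is contained in $\H^c(T)$.

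For the reverse inclusion I would invoke the given orthogonal decomposition $\H=\H^c(T)\oplus\H^{wm}(T)$ and instead identify $\H^{wm}(T)$ with the continuous spectral subspace $\H^{cont}=(\H^{pp})^\perp$. Write $T=\int_\T\lambda\,dE(\lambda)$ by the spectral theorem and set $\mu_{u,v}(\cdot)=\langle E(\cdot)u,v\rangle$, so that $\langle T^nu,v\rangle=\int_\T\lambda^n\,d\mu_{u,v}(\lambda)=\widehat{\mu_{u,v}}(n)$. If $u\in\H^{cont}$ then $E(\{\lambda\})u=0$ for every $\lambda$, so each $\mu_{u,v}$ is non-atomic; by Wiener's lemma $\frac{1}{2N+1}\sum_{n=-N}^N|\widehat{\mu_{u,v}}(n)|^2\to\sum_{\lambda}|\mu_{u,v}(\{\lambda\})|^2=0$, and Cauchy--Schwarz turns this $L^2$-average into the $L^1$-average occurring in the definition of $\H^{wm}(T)$; thus $\H^{cont}\subseteq\H^{wm}(T)$. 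Conversely, if $u$ has a spectral atom, say $w=E(\{\lambda_0\})u\neq 0$, then $w$ is an eigenvector and $\langle T^nu,w\rangle=\lambda_0^n\|w\|^2$ has constant modulus $\|w\|^2>0$, so $u\notin\H^{wm}(T)$; hence $\H^{wm}(T)\subseteq\H^{cont}$. Combining, $\H^{wm}(T)=\H^{cont}=(\H^{pp})^\perp$, and orthogonality of the decomposition gives $\H^c(T)=\H^{wm}(T)^\perp=\H^{pp}$, which is the first claim.

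Finally, for $\H^c(T^k)=\H^c(T)$ I would argue directly from compactness, independently of the first part. The inclusion $\H^c(T)\subseteq\H^c(T^k)$ is immediate, since $\{T^{kn}u:n\in\Z\}\subseteq\{T^nu:n\in\Z\}$ and a subset of a precompact set is precompact. For the converse, suppose $\{T^{kn}u:n\in\Z\}$ is precompact with compact closure $C$; writing an arbitrary $m\in\Z$ as $m=kn+r$ with $0\le r\le k-1$ gives $T^mu=T^r(T^{kn}u)\in T^rC$, so $\{T^mu:m\in\Z\}\subseteq\bigcup_{r=0}^{k-1}T^rC$, a finite union of compact sets (continuous images of $C$), whence $\{T^mu:m\in\Z\}$ is precompact and $u\in\H^c(T)$.

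I expect the only real obstacle to be the spectral-theoretic core of the first claim, namely the passage through Wiener's lemma: one must take the averaging in the symmetric $L^1$ sense appearing in the definition of $\H^{wm}(T)$, and apply the Wiener identity to the generally complex, non-positive cross measures $\mu_{u,v}$ rather than only to $\mu_{u,u}$. Everything else---closedness and invariance of $\H^c(T)$, the single eigenvector computation, and the covering argument for the $T^k$ statement---is routine.
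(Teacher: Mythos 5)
Your proof is correct and complete, but there is nothing in the paper to compare it against: the authors list Proposition \ref{eigenvectors} (together with Furstenberg's Lemma 4.18) under ``the following well known facts'' and give no proof, so your argument supplies exactly what they omit, and it is the standard argument. The covering step for $\H^c(T^k)=\H^c(T)$ is exactly right and needs nothing further: both inclusions are as elementary as you say. For the first assertion, your two inputs are legitimate within the paper's framework: the orthogonal splitting $\H=\H^c(T)\oplus\H^{wm}(T)$ is recorded by the authors immediately before the proposition (it is the Koopman--von Neumann/Jacobs--de Leeuw--Glicksberg decomposition for the group $\{T^n:n\in\Z\}$), and the spectral-theoretic identification of $\H^{wm}(T)$ with $(\H^{pp})^\perp$ goes through as you outline. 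The one step you rightly flag, Wiener's identity for the complex cross measures $\nu=\mu_{u,v}$, does hold without positivity: writing $\frac{1}{2N+1}\sum_{n=-N}^{N}|\widehat{\nu}(n)|^{2}=\iint \bigl(\frac{1}{2N+1}\sum_{n=-N}^{N}(\lambda\overline{\sigma})^{n}\bigr)\,d\nu(\lambda)\,d\overline{\nu}(\sigma)$, the kernel is bounded by $1$ and converges pointwise to the indicator of the diagonal, so dominated convergence yields $(\nu\times\overline{\nu})(\Delta)=\sum_{\lambda}|\nu(\{\lambda\})|^{2}$, which vanishes when $E(\{\lambda\})u=0$ for all $\lambda$. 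Alternatively you could sidestep complex measures entirely via $|\langle T^{n}u,v\rangle|^{2}=\langle (T\otimes\overline{T})^{n}(u\otimes\overline{u}),v\otimes\overline{v}\rangle$ and the mean ergodic theorem --- the same tensor device the authors use in proving Proposition \ref{zero-density-sequence}, so that variant would harmonize slightly better with the paper's toolkit. Two small things you did well that are easy to overlook: proving $\H^c(T)$ is a closed subspace (needed to place the \emph{closed} span of eigenvectors inside it, and your isometry argument for passage to limits is the right one), and using only that $E(\{\lambda\})$ is the self-adjoint projection onto $\ker(T-\lambda)$ to get $(\H^{pp})^{\perp}=\{u:E(\{\lambda\})u=0\ \text{for all }\lambda\}$.
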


\subsection{Proof of Theorem B}
\

\medskip

Now we begin to prove Theorem B. First we need a result about a decomposition of $\H$.

\begin{prop}\label{zero-density-sequence}
Let $G$ be a finitely generated nilpotent group of unitary operators on a Hilbert space $\H$ and $g$
 be a $G$-polynomial with $g(0) = e_G$. Assume that $E$ is the subgroup of $G$ generated by $g(\Z)$.
Then for each $u\in (\H^c(E))^{\perp}$, one has that
for all $v\in \H$,
\begin{equation}\label{s6}
  \lim_{N\to \infty}\frac{1}{2N+1} \sum_{n=-N}^N |\langle g(n)u,v\rangle|=0.
\end{equation}
\end{prop}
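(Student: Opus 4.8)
The plan is to reduce the claim to the Bergelson--Leibman Theorem (Theorem~\ref{BL}) by showing that the ``compact part'' $\H^c(E)$ of the subgroup $E$ generated by $g(\Z)$ coincides with the ``rational part'' $\H^{\rm rat}(g)$ up to the closure/orthogonality that matters. Since Theorem~\ref{BL} gives mean convergence to $0$ of $g(n)u$ precisely when $u\perp \H^{\rm rat}(g)$, while Proposition~\ref{zero-density-sequence} hypothesizes $u\in (\H^c(E))^\perp$, the heart of the matter is to relate these two orthogonal complements. I would aim to prove that $\H^{\rm rat}(g)\subseteq \H^c(E)$ (equivalently $(\H^c(E))^\perp \subseteq (\H^{\rm rat}(g))^\perp$, possibly after passing to closures), which would let me invoke Theorem~\ref{BL} directly.

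\textbf{Step 1: Relate $\H^{\rm rat}(g)$ to $\H^c(E)$.} Recall $\H^{\rm rat}(g)=\bigcup_{l\in\N}\H^{(l)}(g)$, where $\H^{(l)}(g)$ consists of vectors fixed by $P^l$ for every $P\in E$. Any such vector has a finite $E$-orbit (the orbit is contained in the finite-order behaviour forced by $P^l u=u$ for all generators), hence a precompact orbit, so $\H^{(l)}(g)\subseteq \H^c(E)$ for each $l$, giving $\H^{\rm rat}(g)\subseteq \H^c(E)$. Therefore $(\H^c(E))^\perp\subseteq (\H^{\rm rat}(g))^\perp\subseteq (\overline{\H^{\rm rat}(g)})^\perp$, and the hypothesis $u\in(\H^c(E))^\perp$ implies $u\perp \H^{\rm rat}(g)$.

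\textbf{Step 2: Apply Bergelson--Leibman and pass from $L^2$ mean to the scalar sequence.} With $u\perp\H^{\rm rat}(g)$, Theorem~\ref{BL} yields
\begin{equation*}
  \lim_{N\to\infty}\Big\|\frac{1}{2N+1}\sum_{n=-N}^N g(n)u\Big\|=0.
\end{equation*}
This controls the Cesàro average of the vectors $g(n)u$, but the statement concerns the average of the scalars $|\langle g(n)u,v\rangle|$, so I would upgrade the norm-convergence to density convergence of the absolute values. The standard device is to feed the \emph{diagonal}/tensor vector into the same machinery: applying Theorem~\ref{BL} to the $(G\times G)$-polynomial $g(n)\otimes \overline{g(n)}$ acting on $\H\otimes\overline\H$ with the vector $u\otimes\overline u$ controls $\frac{1}{2N+1}\sum |\langle g(n)u,v\rangle|^2$, and then one passes from the $L^2$-density estimate to the $L^1$-density estimate in~\eqref{s6} via Cauchy--Schwarz (as recorded in Remark~\ref{rem2.10}(1), an $L^2$ density-zero sequence is a zero-density-sequence). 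Verifying that $u\otimes\overline u$ is orthogonal to $\H^{\rm rat}(g\otimes\overline g)$ is where Theorem~\ref{Fur} enters: an eigenvector of a tensor product decomposes into tensors of eigenvectors of the factors, so the rational (finite-order-fixed) part of the tensored action sits inside $\H^c(E)\otimes\overline{\H^c(E)}$, against which $u\otimes\overline u$ is orthogonal because $u\perp\H^c(E)$.

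\textbf{The hard part} will be Step~2, specifically the bookkeeping that $u\otimes\overline u\perp \H^{\rm rat}(g\otimes\overline g)$ so that Bergelson--Leibman may be reapplied on the tensor square. This requires using Theorem~\ref{Fur} to see that eigenvectors (hence the compact/rational part) of the product representation are built from eigenvectors of the factors, together with Leibman's Theorem and Proposition~\ref{eigenvectors} to identify $\H^c(E)$ with the closed span of eigenvectors; the orthogonality $u\perp\H^c(E)$ then propagates to the tensor. The alternative route---directly bounding $\frac{1}{2N+1}\sum|\langle g(n)u,v\rangle|$ from the vector-valued mean in Step~2 without tensoring---fails because cancellation in the vector average need not survive taking absolute values, which is exactly why the weak-mixing/tensor formulation is needed.
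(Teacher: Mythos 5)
Your proposal is correct and follows essentially the same route as the paper: tensoring with the conjugate representation, using Theorem~\ref{Fur}, Proposition~\ref{eigenvectors} and Leibman's intersection formula \eqref{cap-6.2} to place $(\H\otimes\overline{\H})^{\rm rat}(g)$ inside $\H^c(E)\otimes\overline{\H}^c(E)$, and then applying Theorem~\ref{BL} to $u\otimes\overline{u}$ and passing from the $L^2$ to the $L^1$ density statement. Your Step~1 (the untensored application of Theorem~\ref{BL}) is a harmless detour the paper omits since, as you note yourself, only the tensor-square argument controls $|\langle g(n)u,v\rangle|$.
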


\begin{proof}
Let $\pi: G\rightarrow \B(\H)$ be the identical unitary representation. We consider the tensor product
$\pi\otimes \overline{\pi}$ on $\H\otimes \overline{\H}$. Let $u\in (\H^c(E))^{\perp}$. We will show that
$u\otimes \overline{u} \perp (\H\otimes \overline{\H})^{\rm rat}(g)$. Then by Theorem \ref{BL},
\begin{equation}\label{}
   \lim_{N\to \infty} \frac{1}{2N+1}\sum_{n=-N}^N |\langle g(n)u, v\rangle |^2 = \lim_{N\to \infty}
   \frac{1}{2N+1}\sum_{n=-N}^N \langle \pi\otimes \overline{\pi}(g(n))(u\otimes  \overline{u}), v\otimes \overline{v}\rangle=0 ,
\end{equation}
which is equivalent to (\ref{s6}).

\medskip

Now we show that $u\otimes \overline{u} \perp (\H\otimes \overline{\H})^{\rm rat}(g)$.
Let $w\in (\H\otimes \overline{\H})^{\rm rat}(g)$. Then by definition there is some $l$ such that $w\in
(\H\otimes \overline{\H})^{(l)}(g)$, which means that $P^lw=w$ for all $P\in E$.
For each fixed $P\in E$, by Theorem \ref{Fur}, $w=\sum_n c_n v_n\otimes v_n'$ where $P^lv_n=\lambda_n v_n,
P^lv_n'=\lambda_n'v_n$ and $\lambda_n\lambda_n'=1$, and the sequences $\{v_n\}_n, \{v_n'\}_n$ are orthonormal.
By Proposition \ref{eigenvectors}, $v_n\in \H^c(P^l)=\H^c(P)$ and $v_n'\in \overline{\H}^c(P^l)=\overline{\H}^c(P)$. Then we have
$w\in \H^c(P)\otimes\overline{\H}^c(P)$.

Since $E$ is a subgroup of finitely generated nilpotent group $G$, $E$ is also a finitely generated nilpotent group.
Let $T_1,T_2,\ldots, T_r$ generate $E$.
Then $$w\in  \bigcap_{i=1}^r\H^c(T_i)\otimes\overline{\H}^c(T_i)=\big( \bigcap_{i=1}^r\H^c(T_i)\big)\otimes
\big( \bigcap_{i=1}^r\overline{\H}^c(T_i)\big) =\H^c(E)\otimes \overline{\H}^c(E)$$
where the last equality comes from \eqref{cap-6.2}.

Now since $u\in (\H^c(E))^{\perp}$, one has $\overline{u}\in (\overline{\H}^c(E))^{\perp}$. Hence
$u\otimes \overline{u}\perp \H^c(E)\otimes \overline{\H}^c(E)$. In particular, $u\otimes \overline{u}\perp w$.
Since $w\in (\H\otimes \overline{\H})^{\rm rat}(g)$ is arbitrary, $u\otimes \overline{u}\perp (\H\otimes \overline{\H})^{\rm rat}(g)$.
The proof of Proposition is completed.
\end{proof}

\begin{rem}\label{remark for H^cE invariant}
One can restate Proposition \ref{zero-density-sequence} as follows.
Let $G$ be a finitely generated nilpotent group of unitary operators on a Hilbert space $\H$ and $g$ be a
$G$-polynomial with $g(0) = e_G$. Assume that $E$ is the subgroup of $G$ generated by $g(\Z)$. Then we have the following decomposition
\begin{equation*}
  \H=\H^c(E)\oplus \H^{wm}(E),
\end{equation*}
and $\H^{wm}(E)=\{u\in \H: \lim_{N\to \infty}\frac{1}{2N+1} \sum_{n=-N}^N |\langle g(n)u,v\rangle|=0 \ \text{for all $v\in \H$ } \}$. Note that $\H^c(E)$ and $\H^{wm}(E)$ are $E$-invariant.
\end{rem}

%





\medskip

Using the language of $G$-polynomials, we can restate Theorem B as follows:

\begin{thm}
Let $G$ be a finitely generated nilpotent group of unitary operators on a Hilbert space $\H$ and $g$ be a
$G$-polynomial. Then for all $u,v\in \H$ the sequence $$\{a_n=\langle g(n)u, v \rangle \}_{n\in \Z}$$ is a
sum of a nilsequence and a zero-density-sequence, i.e. an almost nilsequence.
\end{thm}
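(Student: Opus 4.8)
The plan is to split $u$ according to the $E$-invariant orthogonal decomposition $\H=\H^c(E)\oplus\H^{wm}(E)$ furnished by Proposition \ref{zero-density-sequence} and Remark \ref{remark for H^cE invariant}, where $E$ is the subgroup of $G$ generated by $g(\Z)$, and to show that the weakly mixing component of $u$ contributes a zero-density-sequence while the compact component contributes a nilsequence. First I would normalize: if $g(0)\neq e_G$, write $g(n)=g(0)\bigl(g(0)^{-1}g(n)\bigr)$ and set $h(n)=g(0)^{-1}g(n)$; since $Dh=Dg$, $h$ is a $G$-polynomial with $h(0)=e_G$, and $a_n=\langle g(n)u,v\rangle=\langle h(n)u,g(0)^{*}v\rangle$. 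Replacing $v$ by $g(0)^{*}v$, there is no loss in assuming $g(0)=e_G$, which is exactly the hypothesis needed to invoke Proposition \ref{zero-density-sequence}.

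Next, write $u=u_c+u_{wm}$ with $u_c\in\H^c(E)$ and $u_{wm}\in\H^{wm}(E)=(\H^c(E))^{\perp}$. By Proposition \ref{zero-density-sequence} one has $\lim_{N\to\infty}\frac{1}{2N+1}\sum_{n=-N}^{N}|\langle g(n)u_{wm},v\rangle|=0$, so $\{\langle g(n)u_{wm},v\rangle\}_{n\in\Z}$ is a zero-density-sequence. It therefore remains to prove that $\{\langle g(n)u_c,v\rangle\}_{n\in\Z}$ is a nilsequence, after which the two pieces combine to give an almost nilsequence.

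For the compact part I would reduce to finite dimensions. Since $\H^c(E)$ is $E$-invariant and spanned by finite-dimensional $E$-invariant subspaces, for each $\ep>0$ I can pick $u_c'$ lying in a single finite-dimensional $E$-invariant subspace $W$ with $\|u_c-u_c'\|<\ep$; as $g(n)\in E$ is unitary, $|\langle g(n)u_c,v\rangle-\langle g(n)u_c',v\rangle|\le\ep\|v\|$ uniformly in $n$. Because nilsequences are closed under uniform limits, it suffices to treat $u_c'\in W$. On $W$ the group $E$ acts by a finite-dimensional unitary representation, so $E|_W$ is a finitely generated nilpotent group of unitary matrices whose closure $K\subseteq U(W)$ is a compact nilpotent Lie group, and, using the representation \eqref{polynomial} of a $G$-polynomial, $\langle g(n)u_c',v\rangle=\langle g(n)|_W\,u_c',P_Wv\rangle$ is a continuous matrix coefficient of the \emph{polynomial} sequence $n\mapsto g(n)|_W=U_1^{p_1(n)}\cdots U_s^{p_s(n)}$ in $K$, where $P_W$ is the orthogonal projection onto $W$.

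Finally I would realize this matrix-coefficient sequence as a basic nilsequence. Here lies the difficulty: a nilsequence is defined through a \emph{linear} orbit $\{\phi(\tau^n x)\}$, whereas we have a polynomial orbit in $K$. The crux of the argument is to convert the polynomial orbit $U_1^{p_1(n)}\cdots U_s^{p_s(n)}$ into a linear orbit on an enlarged nilmanifold, in the spirit of the Furstenberg construction carried out for $\{e(p(n))\}$ in Section \ref{section-nilsequence}. Concretely, I would embed $K$ together with the integral polynomials $p_1,\dots,p_s$ into a larger nilpotent Lie group $\tilde G$ with a cocompact lattice $\tilde\Gamma$, producing $\tau\in\tilde G$, $x\in\tilde G/\tilde\Gamma$ and a continuous $F$ on $\tilde G/\tilde\Gamma$ with $\langle g(n)|_W\,u_c',P_Wv\rangle=F(\tau^n x)$; this is precisely where the results of Leibman on polynomial sequences in nilpotent groups (\cite{BL2002,L2000,L1998}) enter. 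Granting this, $\{\langle g(n)u_c',v\rangle\}_{n\in\Z}$ is a basic nilsequence, letting $\ep\to0$ yields a nilsequence for the $u_c$-term, and adding the zero-density $u_{wm}$-term shows that $a_n$ is an almost nilsequence. I expect this last conversion of a polynomial orbit into a linear nilsystem orbit in the non-abelian nilpotent setting to be the main obstacle.
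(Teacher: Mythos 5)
Your normalization and the splitting $u=u_c+u_{wm}$ along $\H=\H^c(E)\oplus\H^{wm}(E)$, with Proposition \ref{zero-density-sequence} disposing of the weakly mixing part, is exactly how the paper begins (and your version is marginally cleaner, since you apply the proposition with $v$ arbitrary rather than decomposing $v$ as well). The divergence, and the problem, is in the compact part. Your finite-dimensional reduction to an $E$-invariant subspace $W$ is fine, but the step you yourself flag as the main obstacle --- converting the polynomial orbit $n\mapsto U_1^{p_1(n)}\cdots U_s^{p_s(n)}$ in the compact nilpotent Lie group $K=\overline{E|_W}$ into a linear orbit on a larger nilmanifold --- is precisely the content of the claim, and you do not prove it; you ``grant'' it. Moreover the references you point to for it are not the right ones: \cite{L1998} gives only the representation \eqref{polynomial} of a $G$-polynomial, \cite{L2000} the splitting of $\H^c$, and \cite{BL2002} the mean ergodic theorem used for the weakly mixing part. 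The statement you need (every polynomial orbit on a nilmanifold is the projection of a linear orbit on a larger nilmanifold, so that continuous functions along polynomial orbits are basic nilsequences) is true, but it lives in \cite{L2005} and in the appendix of \cite{GT2012-2}, and invoking it correctly requires checking that $K$, as a compact (hence, on its identity component, toral) nilpotent Lie group with trivial lattice, fits the hypotheses there. As written, the core of the theorem is asserted rather than established.

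The paper closes this gap without any polynomial-to-linear conversion, by induction on the number $r$ of factors in $h(n)=U_1^{p_1(n)}\cdots U_r^{p_r(n)}$: for $u_1,u_2\in\H^c(E)$ one approximates $u_1$ by a finite combination of eigenvectors of the last operator $U_{r}$ restricted to the ($U_r$-invariant) space $\H^c(E)$, which peels off a scalar factor $e(p_{r}(n)\theta_j)$ --- a nilsequence by the Furstenberg construction of Section \ref{section-nilsequence} --- leaving a matrix coefficient of an $E$-polynomial with $r-1$ factors against vectors still in $\H^c(E)$. The algebra property of $\mathcal{N}$ and its closedness under uniform limits then finish the induction. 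If you want to salvage your route, you must either supply the polynomial-orbit-to-linear-orbit argument for $K$ (with the correct citation and hypothesis check) or replace that step by the eigenvector-peeling induction.
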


\begin{proof}

Without loss generality, one may assume that $g(0)=e_G$ (if necessary we replace $g(n)$ and
$u$ by $g(n)(g(0))^{-1}$ and $g(0)u$). Let $E$ be the subgroup of $G$ generated by $g(\Z)$.
Then by \eqref{decomposition}, we have the decompositions
$$u=u_c+u_w, \ v=v_c+v_w,$$
where $u_c,v_c\in \H^c(E), u_w,v_w\in \H^{wm}(E)$. Thus
\begin{align*}
\langle g(n)u,v \rangle&=\langle g(n)u_c, v_c \rangle+ \langle g(n)u_c, v_w \rangle
+ \langle g(n)u_w, v_c \rangle + \langle g(n)u_w, v_w \rangle\\
&=\langle g(n)u_c, v_c \rangle+  \langle g(n)u_w, v_w \rangle
\end{align*}
Since $u_w\in (\H^c(E))^{\perp}$,  the latter one term is zero-density-sequences by Proposition~\ref{zero-density-sequence}.
So it is left to show that $\langle g(n)u_c, v_c \rangle$ is a nilsequence.
It will follows from the following Claim.

\medskip

\noindent {\bf Claim:} \ {\em For all $E$-polynomials $h(n)$ with $h(0)=e_G$ and $u_1, u_2 \in  \H^c(E)$, one has that
$\langle h(n)u_1, u_2 \rangle$ is a nilsequence.
}

\medskip

\noindent {\em Proof of Claim:}\quad By \eqref{polynomial}, since $E$ is a nilpotent group and $h(n)$ is a $E$-polynomial, one can find $U_1,\ldots, U_r\in E$ such that
 $$h(n)=U_1^{p_1(n)}U_2^{p_2(n)}\cdots U_r^{p_r(n)},$$
 where $p_1,\ldots,p_r$ are integral polynomials with $p_1(0) =\ldots = p_r (0) = 0$. We prove it by induction on $r$.

\medskip

First $r=1$. Let $h(n)=U_1^{p_1(n)}$ such that $U_1\in E$ and $p_1$ is an integer polynomial with $p_1(0)=0$. Let $u_1,u_2\in \H^c(E)$. We show that $\langle h(n)u_1, u_2 \rangle$ is a nilsequence.

\medskip

Since $u_1\in \H^c(E)\subseteq \H^c(U_1)$ and $\H^c(U_1)$ is the closure of the linear span of
eigenvectors of $U_1$, for each $\ep>0$ there is $u'=\sum_{j=1}^k c_j f_j$ such that
$\| u_1-u'\|<\ep/{(\|u_2\|+1)}$, where $U_1f_j=e(\theta_j) f_j$ for each $1\le j\le k$.

Note that
\begin{equation*}\label{s1}
\langle U_1^{p_1(n)}u', u_2\rangle  =\sum_{j}c_j \langle U_1^{p_1(n)}f_j, u_2\rangle = \sum_{j}c_j e(p_1(n)\theta_j) \langle f_j, u_2 \rangle
\end{equation*}
is a nilsequence, since $\{e(p_1(n)\theta_j)\}_{n\in \Z}$ is a nilsequence for each $j$.

Now
\begin{equation*}\label{s2}
\begin{split}
   & |\langle U_1^{p_1(n)}u_1, u_2\rangle-\langle U_1^{p_1(n)}u', u_2\rangle|\le \|U_1^{p_1(n)}(u_1-u')\|\ \| u_2\|\\
   &=\|u_1-u'\| \|u_2\|\le \ep ,
\end{split}
\end{equation*}
which means that $\{\langle U_1^{p_1(n)}u_1, u_2\rangle\}_{n\in \Z}$ is the limit of nilsequences in $l^\infty(\Z)$.
Hence $\{\langle U_1^{p_1(n)}u_1, u_2\rangle\}_{n\in \Z}$ is a nilsequence itself.

\medskip

Now assume the claim holds for $r$, and we show the case for $r+1$. Let
$$h(n) = U_1^{p_1(n)}\ldots U_{r+1}^{p_{r+1} (n)}, $$
where $U_1,\ldots,U_{r+1}\in E$ and $p_1,\ldots,p_{r+1}$ are integral polynomials with $p_1(0) =\ldots = p_r (0) =p_{r+1}(0)= 0$. We
will show that $\{\langle h(n)u_1, u_2\rangle\}_{n\in \Z}$ is a nilsequence.

\medskip

Notice that $\H^c(E)$ is invariant under $U_{r+1}$. Since $\H^c(U_{r+1}|_{\H^c(E)})$ is the closure of the linear span of eigenvectors of $U_{r+1}$ on $\H^c(E)$,
for each $\ep>0$ there is some $u'=\sum_{j=1}^k c_j f_j$ such that $\| u_1-u'\|<\ep/{(\|u_2\|+1)}$,
where $f_j\in \H^c(E)$ and $U_{r+1}f_j=e(\theta_j) f_j$ for each $1\le j\le k$.

We have that
\begin{equation*}\label{s3}
  \begin{split}
\langle h(n)u', u_2\rangle &= \langle U_1^{p_1(n)}\ldots U_r^{p_r(n)} U_{r+1}^{p_{r+1}(n)}u', u_2\rangle \\ & =\sum_{j}c_j \langle U_1^{p_1(n)}\ldots U_r^{p_r(n)} U_{r+1}^{p_{r+1}(n)}f_j, u_2\rangle\\ & = \sum_{j}c_j e(p_{r+1}(n)\theta_j) \langle U_1^{p_1(n)}\ldots U_r^{p_r(n)} f_j, u_2\rangle.
 \end{split}
\end{equation*}
Since $h'(n)=U_1^{p_1(n)}\ldots U_r^{p_r(n)}$ is still a $E$-polynomial with $p_1(0) =\ldots = p_r (0) = 0$ and $f_j, u_2\in \H^c(E)$,
by inductive assumption $\{\langle U_1^{p_1(n)}\ldots U_r^{p_r(n)} f_j, u_2\rangle\}_{n\in \Z}$ is
a nilsequence. Combining this with the fact $\{e(p_1(n)\theta_j)\}_{n\in \Z}$ being a nilsequence for each $j$, one has that
$\{\langle h(n)u', u_2\rangle\}_{n\in \Z}$ is a nilsequence.

Now
\begin{equation*}\label{s4}
\begin{split}
   & |\langle h(n)u_1, u_2\rangle-\langle h(n)u', u_2\rangle|\le \|h(n)(u_1-u')\|\ \| u_2\|\\
   &=\|u_1-u'\| \|u_2\|\le \ep .
\end{split}
\end{equation*}
That means $\{\langle h(n)u_1, u_2\rangle\}_{n\in \Z}$ is the limit of nilsequences in $l^\infty(\Z)$.
Hence $\{\langle h(n)u_1, u_2\rangle\} _{n\in \Z}$ is a nilsequence itself.
By induction the proof is completed.
\end{proof}




\bigskip


\end{document}